\numberwithin{equation}{section}
\theoremstyle{remark}
\newtheorem{example}{Example}[section]
\newtheorem{definition}[example]{Definition}
\newtheorem{construction}[example]{Construction}
\newtheorem{remark}[example]{Remark}
\theoremstyle{plain}
\newtheorem{proposition}[example]{Proposition}
\newtheorem{corollary}[example]{Corollary}
\newtheorem{theorem}[example]{Theorem}
\newtheorem{lemma}[example]{Lemma}
\newcommand{\coh}[2]{{\rm H}^{#1}(#2)}
\newcommand{\isoarrow}{\,\tilde{\longrightarrow}\,}
\newcommand{\struct}[1]{{\mathcal O}_{#1}}
\newcommand{\spec}{{\rm Spec}\,}
\newcommand{\gm}{{\mathbb G}_m}
\newcommand{\bun}{{\rm Bun}}
\newcommand{\bD}{{\textbf D}} 
\newcommand{\bx}{{\textbf x}} 
\newcommand{\by}{{\textbf y}} 
\newcommand{\bz}{{\textbf z}} 
\newcommand{\lri}{{\longrightarrow}}
\newcommand{\soc}{{\rm Soc}}
\newcommand{\rk}{{\rm rk}}
\newcommand{\parmu}{\text{\rm par-}\mu}
\newcommand{\fk}{{\rm Fields}_k}
\newcommand{\sets}{{\rm Sets}}
\newcommand{\ed}{{\rm ed}}
\newcommand{\triv}{{\rm triv}}
\newcommand{\flag}{{\rm Flag}}
\newcommand{\supp}{{\rm supp}}
\newcommand{\qcoh}{{\rm QCoh}}
\newcommand{\sX}{{\mathfrak X}}
\newcommand{\sG}{{\mathfrak G}}
\newcommand{\sF}{{\mathfrak F}}
\newcommand{\m}{{\mathfrak m}}
\newcommand{\E}{{\mathcal E}}
\newcommand{\G}{{\mathcal G}}
\newcommand{\F}{{\mathcal F}}
\newcommand{\W}{{\mathcal W}}
\newcommand{\e}{{\rm cd}}
\newcommand{\RR}{{\mathbb R}}
\newcommand{\Ext}{{\rm Ext}}
\newcommand{\SL}{{\rm SL}}
\newcommand{\BG}{{\mathcal BG}}
\newcommand{\parb}{{\rm par}}
\newcommand{\eF}{{\textbf F}}
\begin{document}

\title[Essential Dimension of Stacks of Parabolic Bundles]{The Essential Dimension
of Stacks of Parabolic Vector Bundles over Curves}

\author[I. Biswas]{Indranil Biswas}

\address{School of Mathematics, Tata Institute of Fundamental
Research, Homi Bhabha Road, Bombay 400005, India}

\email{indranil@math.tifr.res.in}

\author[A. Dhillon]{Ajneet Dhillon}

\address{Department of Mathematics, University of Western Ontario,
London, Ontario N6A 5B7, Canada}

\email{adhill3@uwo.ca}

\author[N. Lemire]{Nicole Lemire}

\email{nlemire@uwo.ca}

\subjclass[2000]{14D23, 14D20}

\keywords{Essential dimension, moduli stack, gerbe, twisted sheaf,
parabolic bundle}

\thanks{Second and third authors partially supported by NSERC.
The third author acknowledges the support of a
Swiss National Science Foundation International short
visit research grant.}

\begin{abstract}
We find upper bounds on the essential dimension of the moduli stack
of parabolic vector bundles over a curve. When there is no parabolic 
structure,
we improve the known upper bound on the essential dimension of the usual
moduli stack. Our calculations also give lower bounds on the essential dimension of the
semistable locus inside the moduli stack of vector bundles of rank $r$ and degree $d$
without parabolic structure.
\end{abstract}

\maketitle

\section{Introduction}

Essential dimension is a numerical invariant of an algebraic
or geometric object defined over a base field $k$ which roughly speaking
measures its
complexity in terms of the smallest number of parameters needed to define
the object over $k$. It was originally defined by Reichstein and Buhler, see 
\cite{buhler:97, reichstein:00}, in the context
of finite and then algebraic groups. The definition was then 
rephrased into functorial language and generalized by Merkurjev.
See~\cite{berhuy:03,merk:09} for surveys on this topic.

Denote by $\fk$ the category of field extensions of
$k$. Let $F\,:\,\fk\,\lri\,\sets$ be a functor. We say that
$a\,\in\, F(L)$ is \emph{defined over a field} $K\,\subseteq \,L$ if
there exists a $b\,\in\, F(K)$ so that $r(b)\,=\,a$ where $r$ is the
restriction
\[
F(K)\,\lri \, F(L)\, .
\]
The \emph{essential dimension} of $a$ is defined to be 
$$
\ed(a)\,\stackrel{\rm def}{=} \, {\rm Min}_K \,\text{\rm tr-deg}_k K\, ,
$$
where the minimum is taken over all fields of definition $K$ of
$a$.

The \emph{essential dimension} of $F$ is defined to be
$$
\ed(F) \,= \,{\rm sup}_a \ed(a)\, ,
$$
where the supremum is taken over all $a\,\in\, F(K)$ and $K$ varies
over all objects of $\fk$.

Reichstein and Buhler first considered the essential dimension
of the functor
$$F_G\,:\,\fk\,\longrightarrow\, \sets\, , ~\, K\,
\longmapsto\, H^1(K,G)\, ,$$
where $H^1(K,\, G)$ is the set of isomorphism classes of $G$-torsors
over ${\rm Spec}(K)$. Then
$\ed(G)\,:=\, \ed(F_G)$ is a numerical invariant of the algebraic group.
Upper and lower bounds on $\ed(G)$ have been a topic of much 
study since then. See~\cite{reichstein:10} for a survey of work on
essential dimension and the particular cases of the essential
dimension of finite and algebraic groups.

For an algebraic stack
$\sX\,\longrightarrow\, {{\rm Aff}}_k$
we obtain a functor
$$F_{\sX}\,:\, \fk\,\lri\, \sets $$
which sends any $K$ to the set of isomorphism classes
of objects in $\sX(K)$. 
\emph{ The essential dimension of } $\sX$ is defined to be the
essential dimension of this functor. We denote this
number by $\ed_{k}(\sX)$.
This definition was first made by Brosnan, Reichstein and Vistoli,
see \cite{brosnan:09}, in which they studied the essential dimension of an 
algebraic stack. Note that this is indeed a generalization of the 
original definition of essential dimension. In particular,
for the classifying stack of an 
algebraic group, $\BG$, we have $\ed_k(\BG)\,=\,\ed_k(G)$
since $F_{\BG}\,=\,H^1(-,G)$. Furthermore, viewed in this light, essential 
dimension can be considered as a generalization of the dimension of an algebraic 
variety. Indeed there are two ways to define the dimension of a variety, via the 
transcendence degree of its function
field or via Krull dimension. Essential dimension is a lift to stacks of the transcendence degree
definition.

Brosnan, Reichstein and Vistoli in ~\cite{brosnan:09} used stack-theoretic results to prove a genericity
theorem computing the essential dimension of a Deligne-Mumford stack $\sX$,
of finite type with finite inertia, from the dimension of its coarse moduli 
space $M$
and the essential dimension of the generic gerbe $\sX_K$, where $K$ is the 
function field of $M$ (see Theorem \ref{t:main} for more details).
They were then able to apply these results to obtain 
better bounds for the essential dimension of some algebraic and finite
groups.

We fix a base field $k$ of characteristic zero that may not be algebraically 
closed. We further fix
 a smooth projective geometrically connected curve $X$ of genus $g$ at least $2$ over this field. We will also assume
that $X$ has at least three $k$-rational points. This technical hypothesis is needed in the proof of 
Theorem \ref{t:functorpoly}. 

For an algebraic group $G$,
we denote by $\bun^G_X$ the moduli stack of $G$-torsors over the curve $X$.
We are interested in studying the 
essential dimension of $\bun^G_X$ for an algebraic group $G$.
Note then that the 
essential dimension of $\bun^G_X$ is the essential dimension of the 
functor sending a field extension $K/k$ to the set of isomorphism
classes of $G$-torsors over $X_K$.

We concentrate on the case $G\,=\,\SL_r$. In this case,
$\SL_r$ torsors over the curve $X$ are vector bundles over $X$ with fixed
isomorphism of the determinant bundle with the trivial bundle on $X$.
In~\cite{dl:09}, we obtained upper bounds on $\ed(\bun^{\SL_r}_X)$
and the essential dimensions of some related moduli stacks
such as $\bun^{r,d}_{X}$, the moduli stack of vector
bundles over $X$ of rank $r$ and degree $d$. Our
upper bounds on $\ed(\bun^{r,d}_X)$ 
depended on the genus $g$ and were quartic polynomials of the rank $r$.

In this paper, we 
 denote by $\bun^{r,d}_{X,\bD}$, the moduli stack of vector bundles of rank $r$
and degree $d$ on $X$ with parabolic structure
along some reduced divisor (see \S~\ref{s:pbundles}).
Note that in the special case of rational weights (see \S~\ref{s:pbundles} for 
definitions and more details), vector bundles on $X$ with parabolic structure along some reduced
divisor $D$ were shown by Biswas in ~\cite{biswas:97} to correspond to
 certain orbifold bundles over $X$ where $\bD$ is the 
ramification divisor.

 Our aim 
is to compute an upper bound on the essential
dimension of $\bun^{r,d}_{X,\bD}$. When the divisor is empty, the
moduli stack coincides with $\bun^{r,d}_X$,
the moduli stack of vector bundles of 
rank $r$ and degree $d$ over $X$. Dhillon and Lemire found bounds on the 
essential
dimension of $\bun^{r,d}_X$ in~\cite{dl:09}. Our
results improve the upper bound obtained there; this is explained in Remark 
\ref{r:comparison}. In particular, our new upper bound is now
a quadratic polynomial in the rank $r$, which depends on $g$.
Further, by carefully choosing our parabolic data we
are able to find lower bounds on the essential dimension of the semistable locus of the usual 
moduli stack (see \S~\ref{s:lower}).
There was no previously known lower bound better than the trivial bound
given by the dimension of the coarse moduli space (see Theorem
\ref{t:main}).

In order to find an upper bound on the essential dimension of $\bun^{r,d}_{X,\bD}$
we first show that $\bun^{r,d,s}_{X,\bD}$, namely the moduli stack of stable 
vector bundles
of rank $r$ and degree $d$ with parabolic structure along $\bD$,
is a Deligne Mumford stack satisfying the conditions of the 
genericity theorem mentioned above.
We are able to apply this theorem to $\bun^{r,d,s}_{X,\bD}$ 
 using the 
calculation in \cite{bd:09} of Brauer group of the moduli space and
some facts about twisted sheaves due to Lieblich
~\cite{lieblich:05}.

To pass from the essential dimension of $\bun^{r,d,s}_{X,\bD}$
to that of $\bun^{r,d,ss}_{X,\bD}$, the moduli stack of semistable vector bundles
of rank $r$ and degree $d$ with parabolic structure along $\bD$,
we use the socle filtration, a finite filtration of a semistable
parabolic bundle with polystable parabolic bundles as quotients (see \S~\ref{s:socle} for details).
This was the key to improving the bounds in \cite{dl:09} as there we
used the the Jordan-H\"older filtration, a finite filtration of 
a semistable parabolic bundle with stable parabolic bundles as quotients. Unlike the 
Jordan-H\"older filtration, the socle
filtration is Galois invariant, so it exists over the base field. This sidesteps the major difficulty in
\cite{dl:09}.

We then use the Harder-Narasimhan filtration, a finite filtration
of a parabolic bundle with semistable parabolic bundles as quotients,
to pass from the essential dimension of the semistable locus
to that of the full moduli stack, $\bun^{r,d}_X$.

The other main ingredient is the correspondence set up in \cite{biswas:97} between parabolic bundles
and orbifold bundles. This allows us to compute extensions of parabolic bundles in terms of orbifold
bundles. Finally we need the orbifold Riemann-Roch theorem, originally proved in \cite{toen:99}, to bound
the dimensions of these groups.

The key results that compute upper bounds are Theorem \ref{t:full}, 
Proposition \ref{p:ssbound}, Proposition \ref{p:growth} and 
Proposition \ref{p:same}. The first 
two results bound the essential dimension in terms of an auxiliary 
function. The last two tell us
about the growth rate of this function.
The lower bound is given in Theorem \ref{t:lower}.

In summary, we have the following (best known) bounds for the essential
dimension of the
moduli stack of vector bundles of rank $r$ and degree $d$ :

\begin{theorem}
Let $X$ be a smooth projective curve of genus at least 2 and with at least 3
$k$-points.
Suppose that $p$ is a prime such that $p^l$ divides $r$ and $d$. 
 We have
$$
 (r^2-1)(g-1) + p^l - 1 + g \,\le\,\ed(\bun^{r,d}_{X}) \le r^2g\, .
$$
\end{theorem}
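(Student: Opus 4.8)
The plan is to read this statement as the consolidation of the paper's two independent threads, specialized to the empty divisor $\bD=\emptyset$, in which case $\bun^{r,d}_{X,\bD}$ is by definition $\bun^{r,d}_X$. Thus the upper bound is to be extracted from the upper-bound machinery (Theorem \ref{t:full} and the semistable bound of Proposition \ref{p:ssbound}, combined with the growth estimates of Propositions \ref{p:growth} and \ref{p:same}), while the lower bound is Theorem \ref{t:lower} pushed forward to the full stack. I would accordingly present the proof as two monotonicity arguments that bracket $\ed(\bun^{r,d}_X)$ from above and below.

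For the upper bound $\ed(\bun^{r,d}_X)\le r^2g$ I would proceed through the three filtrations described in the introduction. First, the Harder--Narasimhan filtration reduces the full stack to its semistable locus, so that $\ed(\bun^{r,d}_X)$ is controlled by the essential dimension of $\bun^{r,d,ss}_X$ plus the cost of recording the filtration data, the latter bounded through the relevant extension groups, which are estimated by orbifold Riemann--Roch after passing to the orbifold picture of \cite{biswas:97,toen:99}. Next, the socle filtration reduces the semistable locus to the polystable, hence to the stable locus $\bun^{r,d,s}_X$; the decisive point, encoded in Proposition \ref{p:ssbound}, is that the socle filtration is Galois invariant and therefore descends to the base field, which is exactly what keeps the bound quadratic rather than quartic in $r$. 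Finally, $\bun^{r,d,s}_X$ is handled by the genericity theorem (Theorem \ref{t:main}) together with the Brauer-group computation of \cite{bd:09} and Lieblich's twisted sheaves \cite{lieblich:05}, producing a bound in terms of the auxiliary function of Theorem \ref{t:full}; Propositions \ref{p:growth} and \ref{p:same} then show that this function grows like $r^2g$, yielding the stated estimate.

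For the lower bound I would first reduce to the semistable locus. Since in characteristic zero (semi)stability is preserved and reflected under field extensions, a semistable object has the same fields of definition in the open substack $\bun^{r,d,ss}_X$ as in $\bun^{r,d}_X$, whence
$$ \ed(\bun^{r,d,ss}_X)\,\le\,\ed(\bun^{r,d}_X)\,, $$
so it suffices to bound $\ed(\bun^{r,d,ss}_X)$ from below, which is the content of Theorem \ref{t:lower}. I would exhibit its two contributions separately. The coarse moduli space of semistable bundles of rank $r$ and degree $d$ has dimension $r^2(g-1)+1=(r^2-1)(g-1)+g$, giving the trivial part of the bound via Theorem \ref{t:main}. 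The extra term $p^l-1$ is the essential dimension of the generic residual gerbe: because the genericity theorem demands finite inertia, one first rigidifies by introducing a carefully chosen parabolic structure so that the stable parabolic stack becomes Deligne--Mumford, and then reads off the gerbe class from \cite{bd:09}. As $p^l$ divides both $r$ and $d$, this class has index divisible by $p^l$, and Lieblich's twisted sheaves \cite{lieblich:05} bound the essential dimension of such a gerbe below by $p^l-1$; summing the two contributions gives the claimed lower bound.

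The main obstacle in both directions is the analysis of the generic gerbe. For the upper bound, the quadratic (rather than quartic) estimate of \cite{dl:09} hinges on replacing the Jordan--H\"older filtration by the socle filtration, since only the latter is Galois invariant and hence descends to $k$, allowing the semistable locus to be controlled by the stable one without an uncontrolled field extension. For the lower bound, the crux is to pin down the index of the generic Brauer class and to convert it, via twisted sheaves, into the exact contribution $p^l-1$, with the parabolic rigidification serving precisely to bring the genericity theorem to bear. Everything else — the dimension count of the coarse space, the extension-group estimates via orbifold Riemann--Roch, and the growth estimates of Propositions \ref{p:growth} and \ref{p:same} — is comparatively routine bookkeeping.
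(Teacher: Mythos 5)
Your upper-bound argument is essentially the paper's own route: specialize to $\bD=\emptyset$ (so $M(\bD)=0$ and the flag terms vanish), reduce the full stack to the semistable locus via Harder--Narasimhan (Theorem \ref{t:full}), the semistable locus to the polystable/stable case via the Galois-invariant socle (Proposition \ref{p:ssbound}, with Theorem \ref{t:functorpoly} -- this is where the three $k$-points enter), handle the stable locus by Theorems \ref{t:main}, \ref{t:main2} and \ref{t:index}, and then evaluate $G_{g,\emptyset}(r)=F_{g,\emptyset}(r)=r^2g$ by Propositions \ref{p:growth} and \ref{p:same}. Your explicit reduction $\ed(\bun^{r,d,ss}_X)\le \ed(\bun^{r,d}_X)$, via descent of semistability, is also the (implicit) step the paper needs to state its lower bound for the full stack. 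All of this is fine.

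The lower bound, however, contains a genuine gap: you invoke the wrong tool for the key step. Lieblich's twisted sheaves (Proposition \ref{p:twisted}) only bound the \emph{index from above} -- they are what prove period equals index (Theorem \ref{t:index}) and hence feed the \emph{upper} bounds; they cannot bound the essential dimension of a gerbe from below. The actual lower-bound input is Corollary \ref{c:prime}, i.e., Karpenko's incompressibility of Severi--Brauer varieties of prime-power index \cite{kar:00} together with \cite{merk:03} and Theorem \ref{t:main2}, and this applies only when the index is \emph{exactly} a prime power. Your hypothesis ``index divisible by $p^l$'' is not sufficient as stated: for composite index the canonical dimension of the Severi--Brauer variety is precisely the open problem of \cite{colliot:07} that the paper alludes to. This is why Theorem \ref{t:lower} adds the parabolic point $\bx=(x,\{p^l,r-p^l\},\{\alpha_1,\alpha_2\})$: its role is not to make the stack Deligne--Mumford (the finite-inertia problem is solved by fixing the determinant, giving the $\mu_r$-gerbe $\bun^{r,\xi,s}_{X}$, with the degree-$d$ case recovered by the Jacobian argument of Corollary \ref{c:stable}(ii)), but to cut $l(\bD')$ down to exactly $p^l$ so that the prime-power equality of Theorem \ref{t:stable} applies, while the small weights guarantee that parabolic semistability forces semistability of the underlying bundle.

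A second, related omission: you cannot simply ``sum the two contributions.'' Theorem \ref{t:main} does not apply to $\bun^{r,d,ss}_X$ directly (it has $\gm$-inertia and is not a gerbe over its coarse space), and the exact value computed on the parabolic stack must be transferred back through the representable flag fibration $\bun^{r,\xi,s}_{X,\bD'}\rightarrow\bun^{r,\xi,ss}_{X}$ via the fibration theorem \cite[Theorem 3.2]{brosnan:09}, which costs exactly $\dim\flag_\bx(\bD')$. That subtraction is what leaves the net gain of $p^l$ (respectively $p^l-1+g$ in the degree-$d$ case); without it your accounting does not produce the stated bound from a stack you are actually allowed to apply the genericity theorem to.
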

This result follows by combining Theorem \ref{t:full}, Theorem \ref{t:lower}, 
Proposition \ref{p:same} and Proposition \ref{p:growth}.

Upon introducing parabolic structure we obtain the following result :

\begin{theorem} 
Suppose that $p$ is a prime such that $p^l$ divides $l(\bD)$.
 We have
$$
 (r^2-1)(g-1) + p^l  - 1 + g + 
\sum_{y\in\bD} \dim \flag_{y}(\bD) \, \le\, \ed(\bun^{r,d}_{X}) \le 
F_{g,\bD}(r)\, .
$$
\end{theorem}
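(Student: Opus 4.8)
The plan is to prove the two inequalities independently and then combine them; both rest on the genericity theorem (Theorem \ref{t:main}) applied to the stable locus $\bun^{r,d,s}_{X,\bD}$, which one first checks is a Deligne--Mumford stack of finite type with finite inertia that is generically a gerbe over its coarse moduli space $M$. The genericity theorem then gives
\[
\ed(\bun^{r,d,s}_{X,\bD}) \,=\, \dim M + \ed(\sX_K),
\]
where $K$ is the function field of $M$ and $\sX_K$ is the generic gerbe. The dimension of the coarse space is $\dim M \,=\, (r^2-1)(g-1) + g + \sum_{y\in\bD}\dim\flag_{y}(\bD)$, since $(r^2-1)(g-1)+g = r^2(g-1)+1$ is the dimension of the moduli of rank-$r$, degree-$d$ bundles and the flag varieties contribute the parabolic part. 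Thus everything reduces to estimating $\ed(\sX_K)$ from above and below, and to relating the stable locus to the full stack.

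For the upper bound I would invoke Theorem \ref{t:full}, whose proof bounds $\ed(\sX_K)$ from above using the Brauer-group computation of \cite{bd:09} together with Lieblich's theory of twisted sheaves \cite{lieblich:05}, which produces twisted vector bundles of controlled rank splitting the gerbe class. One then passes from the stable locus to the semistable locus via the socle filtration, and from the semistable locus to the whole stack via the Harder--Narasimhan filtration; at each stage the essential dimension grows by at most the dimension of the relevant space of extensions between the successive quotients. These extension spaces are computed by transporting the problem to orbifold bundles through the correspondence of \cite{biswas:97} and applying the orbifold Riemann--Roch theorem of \cite{toen:99}. Summing all the contributions yields the auxiliary function $F_{g,\bD}(r)$, whose shape and growth are controlled by Propositions \ref{p:growth} and \ref{p:same}.

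For the lower bound I would appeal to Theorem \ref{t:lower}. Here the coarse-space dimension already accounts for every term except $p^l-1$, so the task is to show $\ed(\sX_K) \ge p^l - 1$. Using the hypothesis $p^l \mid l(\bD)$ and the explicit description of the Brauer class of $M$ from \cite{bd:09}, one checks that the class of the generic gerbe has $p$-primary part of period $p^l$, which forces the stated lower bound on the essential dimension of the gerbe. Finally, since $\bun^{r,d,s}_{X,\bD}$ is an open substack of $\bun^{r,d}_{X,\bD}$ and essential dimension is monotone under restriction to substacks, the lower bound for the stable locus transfers to the full stack.

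The main obstacle lies in the two analyses of the generic gerbe. On the lower side one must certify that the gerbe is genuinely nontrivial of period exactly $p^l$; this is precisely where the Brauer-group computation of \cite{bd:09} and the divisibility $p^l \mid l(\bD)$ are indispensable. On the upper side the delicate point is to track fields of definition through the two filtrations: the socle filtration, unlike the Jordan--H\"older filtration used in \cite{dl:09}, is Galois-invariant and hence descends to the base field, and it is exactly this feature that removes the chief difficulty of \cite{dl:09} and sharpens the bound from a quartic to a quadratic polynomial in $r$. By comparison, the orbifold Riemann--Roch estimates for the dimensions of the extension spaces, although lengthy, are routine once the parabolic-to-orbifold dictionary is set up.
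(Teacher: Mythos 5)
Your upper-bound half is essentially the paper's own route: invoke Theorem \ref{t:full} (socle filtration to control the semistable locus, Harder--Narasimhan filtration for the full stack, extension spaces bounded through the parabolic--orbifold correspondence and orbifold Riemann--Roch), then Propositions \ref{p:same} and \ref{p:growth} to identify $G_{g,\bD}$ with $F_{g,\bD}$ and evaluate its growth. That part is fine, as is your closing observation that the lower bound for the (semi)stable locus transfers to the full stack, since semistability is a geometric condition preserved and reflected by field extension.

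The lower bound, however, is where your proposal genuinely breaks from the paper and has a gap. First, your opening move misfires: Theorem \ref{t:main} cannot be applied to $\bun^{r,d,s}_{X,\bD}$, because a stable parabolic bundle has automorphism group $\gm$, so this stack is a $\gm$-gerbe over its coarse space, has positive-dimensional inertia, and is not Deligne--Mumford. The paper only ever applies the genericity theorem to the fixed-determinant stack $\bun^{r,\xi,s}_{X,\bD}$, which is a $\mu_r$-gerbe with finite inertia, and handles the determinant separately (Corollary \ref{c:stable}: adjoin at most $g$ parameters for the upper bound; the function field of $J^d(X)$ and the Poincar\'e bundle for the lower-bound equality). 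Second, and more seriously, your key claim --- that the $p$-primary part of the generic Brauer class having period at least $p^l$ ``forces'' $\ed(\sX_K)\ge p^l-1$ --- is unsupported by the paper's toolkit. Theorem \ref{t:main2} converts the question into the canonical dimension of a Brauer--Severi variety whose index is $l(\bD)$, and Corollary \ref{c:prime} evaluates this only when the \emph{index} is a prime power; $l(\bD)$ need not be one, and the composite-index case is precisely the open conjecture of \cite{colliot:07} flagged in the introduction (one could attempt a patch via canonical $p$-dimension results of Karpenko--Merkurjev, but nothing of the sort is set up in the paper). The paper's Theorem \ref{t:lower} sidesteps all of this with a construction your proposal misses entirely: adjoin an auxiliary parabolic point $\bx=(x,\{p^l,\,r-p^l\},\{\alpha_1,\alpha_2\})$ with weights small enough that semistability is unaffected, so that the enlarged datum $\bD'$ satisfies $l(\bD')=p^l$ exactly --- a prime power, where Theorem \ref{t:stable} gives an \emph{equality} for $\ed(\bun^{r,\xi,s}_{X,\bD'})$ --- and then descend along the representable flag-forgetting fibration $\bun^{r,\xi,s}_{X,\bD'}\longrightarrow \bun^{r,\xi,ss}_{X,\bD}$ via the fibration theorem \cite[Theorem 3.2]{brosnan:09}, losing only $\dim\flag_{\bx}(\bD')=p^l(r-p^l)$. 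Without that device (or a substitute such as the canonical $p$-dimension machinery), your lower bound does not go through.
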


The reader is referred to section 13 for the definition of the function
$F_{g,\bD}(r)$. The number $l(\bD)$ is defined in section 6.
Once again this results from combining Theorem \ref{t:full}, 
Theorem \ref{t:lower}, Proposition \ref{p:same} and Proposition \ref{p:growth}.

It is worth noting at this point that the main conjecture of \cite{colliot:07} is closely related to 
calculating the essential dimension of the moduli stack of vector bundles via 
Theorem \ref{t:main2}.

\section*{Acknowledgments}
The authors would like to thank the referee for very useful comments.

\section{Essential Dimension}

In this section, we will 
recall some theorems from
\cite{brosnan:09}, including the genericity theorem mentioned in the 
introduction, that will be needed in the future. We assume for the
remainder of this section that $\sX/k $ is a Deligne-Mumford stack,
of finite type,
with finite inertia. By \cite{keel:97}, such a stack has a coarse moduli 
space $M$. The first result that we shall need is the following theorem
proved in \cite{brosnan:09}.

\begin{theorem}\label{t:main}
Recall that our base field has characteristic zero.
Suppose $\sX$ is 
smooth and connected. Let $K$ be the field of rational
functions of $M$, and let $\sX_K = \spec(K) \times_{M} \sX$
be the base change. Then
$$
\ed_{k}(\sX) \,=\, \dim M + \ed_K(\sX_K)\, .
$$
\end{theorem}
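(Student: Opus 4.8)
The plan is to prove the two inequalities separately, in each case mediating between $\sX$ and the \emph{fibre gerbes} $\sX_x := \spec\kappa(x)\times_M\sX$ attached to the scheme-theoretic points $x\in M$. Since $\sX$ is smooth and connected, $M$ is integral; its generic point $\eta$ has $\kappa(\eta)=K$, $\text{tr-deg}_k K=\dim M$, and $\sX_\eta=\sX_K$ is the generic gerbe (over a dense open of $M$ the coarse map is a gerbe, the characteristic being zero). The whole statement reduces to two principles: (i) an object of $\sX$ over a field factors through exactly one $\sX_x$, and descending along $M$ costs precisely $\text{tr-deg}_k\kappa(x)=\dim\overline{\{x\}}$; and (ii) the function $x\mapsto\ed_{\kappa(x)}\sX_x$ is maximised at $\eta$, i.e. $\ed_{\kappa(x)}\sX_x\le\ed_K\sX_K$ for all $x$.

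For the lower bound $\ed_k\sX\ge\dim M+\ed_K\sX_K$ I would exhibit a single witnessing object. The supremum defining $\ed_K\sX_K$ is attained (a bounded supremum of non-negative integers, finiteness coming from the finite inertia), so choose $\xi\in\sX_K(L)$, $L\supseteq K$, with $\ed_K\xi=\ed_K\sX_K$. View $\xi$ in $\sX(L)$; as $L\supseteq K=\kappa(\eta)$, the map $\spec L\to M$ hits $\eta$. If $\xi$ descends to $F\subseteq L$ with $\text{tr-deg}_k F=\ed_k\xi$, then $\spec F\to\sX\to M$ must also send its point to $\eta$ (because $\spec L\to\spec F\to M$ does), forcing $K\hookrightarrow F$. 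Thus $\xi$ is already defined over $F$ \emph{as an object of the generic gerbe}, whence $\ed_K\xi\le\text{tr-deg}_K F$, and
$$\ed_k\xi=\text{tr-deg}_k F=\dim M+\text{tr-deg}_K F\ge\dim M+\ed_K\xi=\dim M+\ed_K\sX_K.$$

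For the upper bound I would take an arbitrary $\xi\in\sX(F)$ and, after spreading out, assume $F$ finitely generated over $k$. Let $x\in M$ be the image of $\spec F\to\sX\to M$, so $\kappa(x)\hookrightarrow F$ and $d:=\text{tr-deg}_k\kappa(x)=\dim\overline{\{x\}}\le\dim M$. Then $\xi$ is an object of $\sX_x$ over $\kappa(x)$, and any $\kappa(x)$-field of definition $F'$ of $\xi$ inside $\sX_x$ is automatically a $k$-field of definition in $\sX$; since $\text{tr-deg}_k F'=d+\text{tr-deg}_{\kappa(x)}F'$, this yields
$$\ed_k\xi\le d+\ed_{\kappa(x)}\sX_x\le\dim M+\ed_K\sX_K,$$
the last step being exactly principle (ii).

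The hard part is (ii): that the essential dimension of the fibre gerbe is largest at the generic point. This is a specialisation statement — essential dimension cannot increase under specialisation inside the family $\sX\to M$ — and it is where smoothness and characteristic zero are genuinely used. I would prove it by spreading a versal object of $\sX_K$ over a neighbourhood of $\eta$ (or over a DVR dominating a chosen specialisation to $x$), so that a compression realising $\ed_K\sX_K$ restricts to a compression of $\sX_x$ of no larger dimension, generic smoothness supplying the flatness that keeps the special fibre a gerbe. Non-generic $x$ are then handled by induction on $\dim M$, replacing $M$ by $\overline{\{x\}}$. The ancillary points — that descending along the Keel–Mori coarse space $M$ contributes exactly $\dim\overline{\{x\}}$, and that $\sX_x$ is a gerbe so that $\ed_{\kappa(x)}\sX_x$ is defined — are routine given generic flatness; the specialisation inequality is the crux.
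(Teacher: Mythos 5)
Your lower-bound argument is essentially correct (and is the easy direction, matching what happens in the reference): once an object lies over the generic point of $M$, every field of definition must contain $K$, and $\mathrm{tr\text{-}deg}_k = \dim M + \mathrm{tr\text{-}deg}_K$ does the rest; attainment of the supremum is not even needed, since $\ed_k(\xi)\ge \dim M+\ed_K(\xi)$ for each $\xi$ already gives the inequality after taking suprema. The upper bound, however, has a fatal gap: your principle (ii), that $\ed_{\kappa(x)}\sX_x\le \ed_K\sX_K$ for every $x\in M$, is false. Take $G$ a nontrivial finite group, $V$ a faithful representation, and $\sX=[V/G]$: this is smooth, connected, of finite type, with finite inertia, and $M=V/G$. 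The action is generically free, so the generic fibre $\sX_K$ is the trivial gerbe $\spec K$ and $\ed_K\sX_K=0$; but the fibre $\sX_x$ over the image $x$ of the origin contains $BG$ as a closed substack (every $G$-torsor, equipped with the zero map, is an object), so $\ed_{\kappa(x)}\sX_x\ge \ed(G)>0$. The same failure occurs already for $[\mathbb{A}^1/\mu_2]$, where the generic gerbe is trivial but the fibre at the origin has essential dimension $1$. The underlying reason is that automorphism groups jump on closed substacks, so $\sX\to M$ is a gerbe only over a dense open, the special fibres are not gerbes, and their essential dimension typically \emph{increases} under specialization — the monotonicity you posit goes the wrong way. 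What is true, and what the theorem amounts to, is the weaker bound $\ed_{\kappa(x)}\sX_x\le \ed_K\sX_K+(\dim M-\dim\overline{\{x\}})$: the codimension of $x$ is exactly the slack your inequality discards, and in the $[V/G]$ example it is what makes the statement equivalent to $\ed(G)\le\dim V$. Your proposed proof of (ii) ("flatness keeps the special fibre a gerbe") founders on the same point.

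For the record, the paper does not prove this theorem at all — it cites Theorem 6.1 of Brosnan–Reichstein–Vistoli — and the actual argument there inverts your strategy: rather than comparing the fibres $\sX_x$, one generizes the \emph{object}. Given $\xi\in\sX(F)$ over a special point, smoothness of $\sX$ (via an irreducible \'etale atlas through a lift of $\xi$) produces an object over a DVR $R$ with residue field (a finite extension of) $F$ whose generic fibre maps to the generic point of $M$; the crux is then a specialization inequality — for a tame Deligne--Mumford stack with finite inertia, if $\tilde\xi\in\sX(R)$ then $\ed_k(\tilde\xi_{\mathrm{special}})\le \ed_k(\tilde\xi_{\mathrm{generic}})$ — proved by descending a compression of the generic fibre along a suitable valuation ring, using finiteness of inertia and tameness (this is where characteristic zero enters). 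Combined with your (correct) computation for objects over the generic point, this yields $\ed_k(\sX)=\dim M+\ed_K(\sX_K)$. So the architecture you want is "every object is a specialization of a generic object, and $\ed$ does not increase under specialization," not "$\ed$ of the fibre gerbe is maximal at $\eta$"; the latter is simply untrue.
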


\begin{proof}
See \cite[Theorem 6.1]{brosnan:09}.
\end{proof}

The stack $\sX_K/K$ is called the generic gerbe. In the case
where this gerbe is banded by $\mu_n$, more can be said about
$\ed_K(\sX_K)$. 

Let $\sG$ be a gerbe over a field $K$ banded
by $\mu_n$. There is an associated $\gm$-gerbe over $K$, denoted by $\sF$, coming from the canonical inclusion
$\mu_n\,\hookrightarrow\, \gm$.
It gives a torsion class
in the Brauer group ${\rm Br}(K)$. 
The index of this class is called the \emph{index} of the gerbe, and is denoted 
by ${\rm ind}_K(\sG)\, =\, d$. There is a Brauer-Severi variety $P/K$ of 
dimension $d-1$
whose class maps to the class of $\sG$ via the connecting homomorphism
$$
\coh{1}{\spec(K),{\rm PGL}_d}\,\lri\, \coh{2}{\spec(K),\gm}
$$
for the exact sequence $e\,\longrightarrow\, \gm \,\longrightarrow\,{\rm GL}_d
\,\longrightarrow \,{\rm PGL}_d\,\longrightarrow\, e$.

Let $V$ be a smooth and proper variety over $k$.
The set $V(k(V))$ is the collection of 
rational endomorphisms of $V$ defined over $k$.
Define
$$ \e_k(V) \,=\, \inf\{\dim \overline{{\rm im}(\phi)} \mid
\phi\in V(k(V)) \}\, .$$
The number $\e_k(V) $ is called the \emph{canonical dimension} of $V$.
We recall another theorem from \cite{brosnan:09}.

\begin{theorem}\label{t:main2}
In the above situation, if $d>1$, then
$$
\ed_K(\sG) \,=\, \e_K(P) + 1\, ,
$$
and
$$
\ed_K(\sF) \,=\, \e_K(P) \, .
$$
\end{theorem}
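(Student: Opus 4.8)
The plan is to compute both essential dimensions by translating objects of the gerbes into splitting data for the Brauer class $\alpha$, and to recognize the resulting invariant as the canonical dimension of $P$. The starting observation is that a $\gm$-gerbe, resp.\ a $\mu_n$-gerbe, over a field $L$ admits an object if and only if it is neutral over $L$, i.e.\ if and only if its class in $\coh{2}{L,\gm}$, resp.\ $\coh{2}{L,\mu_n}$, vanishes. Since the Kummer sequence together with $\mathrm{Pic}(\spec L)=0$ gives an injection $\coh{2}{L,\mu_n}\hookrightarrow \coh{2}{L,\gm}={\rm Br}(L)$ sending the class of $\sG$ to that of $\sF$, the two gerbes have objects over exactly the same extensions $L/K$, namely the splitting fields of $\alpha$; equivalently (as $P$ is the Brauer--Severi variety of the underlying division algebra) those $L$ with $P(L)\neq\emptyset$. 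Thus both functors have the same ``support,'' and the difference between $\ed_K(\sF)$ and $\ed_K(\sG)$ will come entirely from the object groupoids.

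For $\sF$: over any splitting field $E$ we may trivialise $\sF_E\cong B\gm_E$, whose groupoid of objects over any field is that of line bundles, hence has a single isomorphism class defined already over the prime field. Consequently every object $a\in\sF(L)$ is defined over any intermediate splitting field of $\alpha$, so $\ed(a)$ equals the least transcendence degree of a splitting field of $\alpha$ lying between $K$ and $L$. Taking the supremum over $L$ (the worst case being $L=K(P)$) identifies $\ed_K(\sF)$ with the essential dimension of the detection functor of $P$, which sends $L$ to a one-point set exactly when $P(L)\neq\emptyset$. By the theorem of Karpenko and Merkurjev this essential dimension is the canonical dimension $\e_K(P)$, giving $\ed_K(\sF)=\e_K(P)$.

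For $\sG$: the inclusion $\mu_n\hookrightarrow\gm$ induces a morphism $\sG\to\sF$, and over a splitting field $E$ a trivialisation $\sG_E\cong B\mu_{n,E}$ exhibits the extra data carried by an object of $\sG$, beyond its image in $\sF$, as a $\mu_n$-torsor, i.e.\ a class in $L^*/(L^*)^n$. This yields the upper bound: given $a\in\sG(L)$, first descend its image $\bar a\in\sF(L)$ to a splitting field $E$ with transcendence degree at most $\e_K(P)$, and then descend the remaining $\mu_n$-torsor at the cost of at most $\ed(\mu_n)=1$ further parameter; hence $\ed_K(\sG)\le\e_K(P)+1$. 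Informally the same splitting-versus-torsor decomposition says that $\sF$ contributes $\ed(\gm)=0$ and $\sG$ contributes $\ed(\mu_n)=1$ on top of the common canonical-dimension term, which is why the answers differ by exactly one.

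The hard part is the matching lower bound $\ed_K(\sG)\ge\e_K(P)+1$, i.e.\ proving that the cost of the splitting field and the cost of the $\mu_n$-torsor genuinely add and cannot be shared. I would produce it by working over $L=K(P)(t)$ with $t$ transcendental, taking the object of $\sG$ whose $\sF$-image realises $\e_K(P)$ and whose torsor class is that of $t$, and showing that any field of definition must simultaneously split $\alpha$ and kill the class of $t$; a specialisation or incompressibility argument, using that $d={\rm ind}_K(\sG)>1$ so that $P$ is positive dimensional and $\alpha$ is genuinely nonsplit, then forces the transcendence degree up to $\e_K(P)+1$. Granting this independence, combining the two bounds gives $\ed_K(\sG)=\e_K(P)+1$ and $\ed_K(\sF)=\e_K(P)$, as claimed.
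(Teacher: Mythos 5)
Your overall architecture is exactly the one behind the result (the paper itself gives no argument, simply citing \cite[Theorem 4.1]{brosnan:09}, and your route — objects of either gerbe exist over $L$ iff $\alpha_L=0$ iff $P(L)\neq\emptyset$, isomorphism classes are a torsor under $\coh{1}{L,\gm}=0$ resp.\ $\coh{1}{L,\mu_n}=L^*/(L^*)^n$, and the detection functor of $P$ has essential dimension $\e_K(P)$ by Karpenko--Merkurjev — is the route taken there). Granting those standard inputs, your computation of $\ed_K(\sF)=\e_K(P)$ and your upper bound $\ed_K(\sG)\le \e_K(P)+1$ are correct.

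However, there is a genuine gap: the lower bound $\ed_K(\sG)\ge \e_K(P)+1$, which you yourself identify as the hard part, is left as a plan, and the plan as stated does not work. You say a field of definition $E$ of your object over $L=K(P)(t)$ ``must simultaneously split $\alpha$ and kill the class of $t$,'' but the torsor class of an object of $\sG(L)$ is only defined relative to a chosen base object, and $E$ need not contain $K(P)$; descending your object to $E$ only tells you that $t$ differs from the comparison class of two base objects (one over $E$, one over $K(P)$) by a class from $E^*/(E^*)^n$, so no condition ``$t$ comes from $E$'' can be extracted directly. The repair is a concrete specialisation argument that you gesture at but must actually run: reduce to $E$ finitely generated and let $v$ be the $t$-adic valuation on $L$, with residue field $K(P)$. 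If $v|_E$ were trivial, then both your object and the reference object would be pulled back from subfields of the valuation ring $\mathcal{O}_v$, so their difference class in $L^*/(L^*)^n$ would lie in the image of $\coh{1}{\mathcal{O}_v,\mu_n}=\mathcal{O}_v^*/(\mathcal{O}_v^*)^n$ and hence have valuation divisible by $n$ — contradicting $v(t)=1$, since $n>1$ (this is where $d>1$ enters and must be said). Therefore $v|_E$ is nontrivial, its residue field $\bar{E}$ embeds into $K(P)$ and still splits $\alpha$ (the Brauer group of a discrete valuation ring injects into that of its fraction field), whence $\text{tr-deg}_K E \ge 1 + \text{tr-deg}_K \bar{E} \ge 1+\e_K(P)$, the last step again by Karpenko--Merkurjev applied to splitting subfields of $K(P)$. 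Without this (or an equivalent incompressibility argument), the claimed additivity of the two costs — and hence the equality $\ed_K(\sG)=\e_K(P)+1$ — remains unproven.
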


See \cite[Theorem 4.1]{brosnan:09} for a proof.

\begin{corollary} \label{c:prime}
In the above situation, if ${\rm ind}(P)=p^r$ is a prime power, we have
$$
\ed_K(\sG) \,=\, {\rm ind}_K(P)
$$
and 
$$
\ed_K(\sF) \,=\, {\rm ind}_K(P) - 1
$$
\end{corollary}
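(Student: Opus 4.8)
The plan is to derive both equalities from Theorem \ref{t:main2} by computing the canonical dimension $\e_K(P)$. Since $d \,=\, {\rm ind}_K(P) \,=\, p^r$ is a prime power, the Brauer--Severi variety $P$ of dimension $d-1$ is associated to a central simple $K$-algebra of degree $d$ whose index equals its degree; such an algebra is a division algebra $D$, so $P$ is the Severi--Brauer variety ${\rm SB}(D)$ of a division algebra of prime-power degree $p^r$. In particular $\dim P \,=\, p^r - 1 \,=\, {\rm ind}_K(P) - 1$. The whole corollary then reduces to the single claim $\e_K(P) \,=\, \dim P$, after which the two formulas of Theorem \ref{t:main2} give
$$
\ed_K(\sG) \,=\, \e_K(P) + 1 \,=\, \dim P + 1 \,=\, p^r \,=\, {\rm ind}_K(P)
$$
and $\ed_K(\sF) \,=\, \e_K(P) \,=\, p^r - 1 \,=\, {\rm ind}_K(P) - 1$.

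First I would note the trivial inequality $\e_K(P) \,\le\, \dim P$: the identity is one of the rational self-maps of $P$ appearing in the infimum defining $\e_K$, and its image has dimension $\dim P$. The content is the reverse inequality $\e_K(P) \,\ge\, \dim P$, which says that $P$ is incompressible, i.e.\ that no rational self-map of $P$ has image of strictly smaller dimension. For this I would invoke the incompressibility of Severi--Brauer varieties of division algebras of prime-power index, a theorem of Karpenko. A convenient route is through the canonical $p$-dimension: one has $\e_K(P) \,\ge\, {\rm cd}_p(P)$ in general, and Karpenko and Merkurjev compute ${\rm cd}_p({\rm SB}(D)) \,=\, p^r - 1$ for $D$ of index $p^r$, which forces $\e_K(P) \,=\, \dim P$. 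I would cite this result directly rather than reprove it, as its proof rests on the structure of the Chow groups and motives of Severi--Brauer varieties.

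The main obstacle is exactly this incompressibility input: the lower bound $\e_K(P) \,\ge\, \dim P$ is not formal and depends on the hard cycle-theoretic results on Severi--Brauer varieties in the prime-power case. It is essential that $d$ be a prime power, since for composite index the Severi--Brauer variety can be compressible and the equality $\e_K(P) \,=\, {\rm ind}_K(P) - 1$ fails; this is precisely where the hypothesis of the corollary is used. Once incompressibility is granted, the remaining steps --- identifying $P$ with ${\rm SB}(D)$, recording the trivial upper bound, and substituting into Theorem \ref{t:main2} --- are routine.
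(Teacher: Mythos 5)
Your proposal is correct and follows essentially the same route as the paper: the paper's proof consists precisely of citing Karpenko \cite{kar:00} and Merkurjev \cite{merk:03} for the incompressibility of Severi--Brauer varieties of division algebras of prime-power index, which combined with Theorem \ref{t:main2} gives both equalities. You have simply made explicit the routine reduction (identifying $P$ with ${\rm SB}(D)$, the trivial bound $\e_K(P)\le\dim P$, and the substitution) that the paper leaves to the reader.
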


\begin{proof}
See \cite[Theorem 2.1]{kar:00} and \cite{merk:03}.
\end{proof}

\section{Parabolic Bundles}
\label{s:pbundles}

\begin{definition}
\label{d:ppoint}
 A \emph{parabolic point} $\bx$ on $X$ consists of a triple
$$
(x\, ,\{k^x_i \}_{i=1}^{n(x)}\, , \{\alpha^x_i\}_{i=1}^{n(x)})\, ,
$$
where $x$ is a $k$-point of $X$, the $k_i^x$ are positive integers called the
\emph{multiplicities} and the $\alpha^x_i$ are rational numbers, called the
\emph{parabolic weights} (or simply {\it weights}). The weights are required to 
satisfy the following 
condition :
$$
0\,\le\, \alpha^x_1 \,<\, \alpha^x_2\,<\,\cdots \,<\, \alpha^x_{n(x)} \,<
\, 1\, .
$$ 
\end{definition}

\begin{definition}
\label{d:datum}
 A \emph{parabolic datum} $\bD$ on $X$ consists of a finite collection of 
parabolic points $\bx_j\,=\, (x_j\, ,\{k^{x_j}_i \}_{i=1}^{n(x_j)}\, , 
\{\alpha^{x_j}_i\}_{i=1}^{n(x_j)})$, so
$$
\bD \,=\,\{\bx_1\, , \bx_2\, , \cdots\, , \bx_s \}\, ,
$$
such that $\sum_{i=1}^{n(x_j)} k^{x_j}_i$ is independent of $j$.
We require the points to be pairwise distinct, that is
$x_j\,\ne \,x_i$ for $j\,\ne\, i$.
\end{definition}

The \emph{support} of the datum is defined to be the reduced divisor $x_1 + 
\ldots + x_s$. We denote this divisor
by $|\bD|$.

\begin{definition}
\label{d:pbundle}
Fix a parabolic datum $\bD$ on $X$.
If $S$ is a scheme then a \emph{family of parabolic bundles} $\F_*$ on $X$ 
parameterized by $S$ with parabolic datum $\bD\,=\,\{\bx_1\, , \bx_2\, , 
\cdots\, , \bx_s 
\}$ consists of a vector bundle
$\F$ on $X\times S$ together with filtrations by vector subbundles
$$
\F|_{\{x_j\}\times S}\, =\, F^{x_j}_1(\F)\,\supset\, F^{x_j}_2(\F)\,\supset\, 
\cdots \,\supset\, F^{x_j}_{n(x_j)}(\F)
\,\supset\, F^{x_j}_{n(x_j)+1}(\F)\,=\,0
$$
with $F^{x_j}_{i}(\F)$ locally free of rank 
$$k^{x_j}_{i} + k^{x_j}_{i+1} + \cdots +k^{x_j}_{n(x_j)}\, .$$
\end{definition}

The weight $\alpha^{x}_i$ is associated with $F^{x}_i(\F)$.
This definition forces $\rk(\F)\,=\,\sum_{i=1}^{n(x)} k^{x}_i$ for each
$x\,\in \, \supp(|\bD|)$. 

When $S$ is reduced to a point we call $\F_*$ a parabolic bundle.

\begin{definition}
\label{d:psubbundle}
Suppose that $\F_*$ is a parabolic bundle with datum $\bD$. 
A \emph{parabolic subbundle} 
$\F'_*\,=\,(\F',\{F^{x}_i(\F')\, :\, i\,=\,1,\cdots,n'(x),x\,\in\, 
\supp(|\bD|)\})$ of $\F_*$ is a parabolic bundle with datum $\bD'$ such that
\begin{enumerate}
\item $|\bD|\,=\,|\bD'|$
\item $\F'$ is a subbundle of $\F$
\item for each point $x$ in the support, the weights $\{\alpha'^{x}_i\}_{i=1}^{n'(x)}$ are a subset of the weights $\{\alpha^{x}_i\}_{i=1}^{n(x)}$
\item if $m$ is maximal so that $F_i^{x}(\F')\,\subseteq\,F_m^{x}(\F)$, then
${\alpha'}^{x}_i \,=\, \alpha^{x}_m$.
\end{enumerate}
\end{definition}

Given a parabolic bundle $\F_*$ on $X$ with parabolic datum 
$$\bD \,=\, \{\bx_1\, , \bx_2\, , \cdots\, , \bx_s \}\, ,$$
we define the \emph{parabolic degree} of $\F_*$ to be the rational number
$$
\text{par-deg}(\F_*) \,=\, \deg(\F) + \sum_{j=1}^s\sum_{i=1}^{n(x_j)} 
k^{x_j}_{i}\alpha^{x_j}_{i}\, .
$$
The \emph{parabolic slope} is defined to be $\text{par-}\mu(\F_*) \,=\, 
\text{par-deg}(\F_*)/{\rm rk}(\F)$.

Denote by $\overline{k}$ an algebraic closure of the ground field $k$. 

We say the $\F_*$ is \emph{semistable} (respectively, \emph{stable}) if 
for every parabolic subbundle $\E_*$ of $(\F_{\overline{k}})_*$ with $0\,<\, {\rm 
rk}(\E_*) \,<\, {\rm rk}(\F_*)$, we have 
$$
\parmu(\E_*)\le \parmu((\F)_*) \quad (\text{respectively, } 
\parmu(\E_*) < \parmu((\F_{\overline{k}})_*)\, ,
$$
where $(\F_{\overline{k}})_*$ is the base change of $\F_*$.

The usual arguments show that an arbitrary parabolic bundle has a unique maximal 
destabilizing parabolic subbundle $\E_* \,\subseteq\, (\F_{\overline{k}})_*$ of 
maximal
parabolic slope. The uniqueness implies that all the Galois conjugates
$\sigma^*(\E_*)\, \subset\, (\F_{\overline{k}})_*$ coincide.
Hence this subbundle is defined over the ground field $k$ so that a base extension is not required in the
definition of semistable parabolic bundles.

\begin{construction}
\label{c:rfiltration}
Let $\F_*$ be a parabolic bundle with datum $\bD$. We wish to construct 
 a bundle $\F_t$ for each $t\,\in\, \RR$. 
Set $D \,=\, |\bD|$. For each $t\in \RR$ with $0\,\le\, t \,<\,1$, we construct a 
coherent sheaf $V_t(\F_*)$ supported on $D$ by letting the component of 
$V_t(\F_*)$ on $x_j$ be the subspace $F_i^{x_j}(\F)$ of 
$\F|_{x_j}$, where $\alpha^{x_j}_{i-1} \,<\, t\,\le\, \alpha^{x_j}_i$;
if $t\,>\,\alpha^{x_j}_{n(x_j)}$, then the component of $V_t(\F_*)$ on $x_j$ is 
defined to be zero.
Taking preimages of $V_t(\F)$ gives a sheaf $\F_t$ with
$\F\,\supseteq \,\F_t \,\supseteq \,\F(-D)$. We can extend this construction to 
$t\,\in\, \RR$ by defining $\F_t \,=\, \F_s(-\lfloor t \rfloor D)$, where 
$s\,=\, t-\lfloor t \rfloor$.

This collection $\{\F_t\}_{t\in \mathbb R}$ is decreasing; it has jumps at 
rational numbers only. Also, it is periodic, more precisely, $\F_t(-D)\,=\, 
\F_{t+1}$, and 
it is left continuous. It is clear that $\{\F_t\}_{t\in \mathbb R}$
uniquely determines the parabolic bundle.
\end{construction}

Following \cite{yokogawa:95} we will now define an abelian category $\qcoh(X,D,N)$. 

First, we consider $\RR$ as a category using its natural structure as an ordered set.
An $\RR$-filtered sheaf is a functor
$$
\RR^{\rm op}\rightarrow \qcoh(X).
$$
Morphisms of $\RR$-filtered sheaves are just natural transformations. 
Given $s\in\RR$ and an $\RR$-filtered sheaf $\F_*$ we can define a shifted sheaf by setting
$$
\F[s]_t = \F_{s+t}.
$$
Given a quasicoherent sheaf $\G$ and an $\RR$-filtered sheaf $\
F_*$, we define the $\RR$-filtered sheaf
$\G\otimes \F_*$ by taking the tensor product pointwise, ie
$$
(\G\otimes \F_*)_t = \G\otimes\F_t.
$$

We identify a full abelian subcategory $\qcoh(X,D,N)$  of the category of $\RR$-filtered sheaves.
The objects of this
category are $\RR$-filtered sheaves $\F_*$ such that
\begin{enumerate}[(i)]
 \item we have a natural isomorphism
$$
j:\F_*\otimes\struct{X}(-D) \isoarrow \F[1]_*.
$$                                    
such that the induced map
$$
\F_{s+1} \cong \F_s(-D) \rightarrow \F_s
$$
is the natural inclusion.
\item For $ l/N < s \le (l+1)/N$ we have that the natural maps
$
\F_{l+1/N}\rightarrow \F_{s}
$
are  isomorphisms.
\end{enumerate}

Fix a reduced effective divisor $D\,=\,\sum_{i=1}^s x_i$ on $X$, where each $x_i$ 
is a $k$-rational point.

Denote by ${\rm PVect}(X,D,N)$ the category of parabolic bundles with parabolic
datum only inside the support of $D$ and parabolic weights integer multiples of 
$\frac{1}{N}$.
The morphisms in this category are given by the following definition.

\begin{definition}
Suppose that $\F$ and $\F'$ are parabolic bundles with parabolic bundles with 
parabolic data $\bD$ and $\bD'$.
Suppose that $|\bD|\,=\,|\bD'|$.
A \emph{morphism of parabolic bundles} $f_*\,:\,\F_*\,\longrightarrow\, \F'_*$ is 
a morphism 
$f\,:\,\F\,\longrightarrow\,\F'$ of underlying bundles such that for every parabolic point 
$\bx$ we have
$$
f_x ( F^x_i(\F) ) \,\subseteq\, F^x_j(\F')
$$
whenever $\alpha_i^x \,>\, {\alpha'}^x_j$.
\end{definition}

We identify ${\rm PVect}(X,D,N)$ with a full subcategory of the abelian category
$\qcoh(X,D,N)$ via \ref{c:rfiltration}.

\section{Our Stacks}\label{s:stacks}

Let $\bD$ be a parabolic datum on $X$. We will denote by $\bun^{r,d}_{X,\bD}$ 
the moduli stack of
rank $r$ degree $d$ parabolic bundles with datum $\bD$. Note that the weights 
only play a role when defining
stability and semistability. Hence this stack is just a fibered product of flag 
varieties over the moduli stack $\bun^{r,d}_X$ of vector bundles of rank $r$ 
and degree $d$ without parabolic structure. 

Fix a line bundle $\xi$ on $X$. We denote by $\bun^{r,\xi}_{X,\bD}$ the moduli 
stack of parabolic bundles 
with fixed identification with $\xi$ of the top exterior power of the 
underlying vector bundle. Precisely, there is a Cartesian square
$$
\xymatrix{ 
\bun^{r,\xi}_{X,\bD} \ar[r] \ar[d] & \spec(k) \ar[d] \\
\bun^{r,d}_{X,\bD} \ar[r] & \bun^{1,d}_X.
}
$$
Here $\bun^{1,d}_X$ is the moduli stack of line bundles of degree $d$, the 
right vertical arrow corresponds to the line bundle $\xi$ and the bottom 
horizontal arrow is the determinant map.
As stability and semistability are open conditions, see \cite[page 
226-228]{mehta:80}, there are various open substacks, $\bun^{r,d,s}_{X,\bD}\, , 
\bun^{r,d,ss}_{X,\bD}\, , \bun^{r,\xi,s}_{X,\bD}$ and
$\bun^{r,\xi,ss}_{X,\bD}$.

We explain explicitly what $\bun_{X,\bD}^{r,\xi}$ is. The objects of the 
category fibered in groupoids over a scheme
$S$ consist of pairs $(\F,\phi)$, where $\F$ is
a family of parabolic bundles of rank $r$ on $X\times S$ and $\phi$ is an isomorphism
$$
\phi\,:\, \wedge^r\F\,\isoarrow\, \xi\, .
$$
The isomorphisms in the groupoid over $S$ are isomorphisms of parabolic bundles
compatible with the trivializations.

The stack $\bun_{X,\bD}^{r,\xi}$ is somewhat unnatural as $\xi$ is not a parabolic 
line bundle. However it is a natural stepping stone in understanding the 
essential 
dimension of the stack $\bun_{X,\bD}^{r,d}$. Below we will see that 
$\bun_{X,\bD}^{r,\xi,s}$ is a smooth Deligne-Mumford stack with finite inertia so 
that Theorem \ref{t:main} applies. We will be able to compute the period and index 
of its generic gerbe and apply Theorem \ref{t:main2} to understand its essential 
dimension. 

\begin{proposition}
 The stack $\bun_{X,\bD}^{r,\xi}$ is smooth.
\end{proposition}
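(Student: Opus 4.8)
The plan is to prove smoothness by exhibiting $\bun_{X,\bD}^{r,\xi}$ as built from smooth stacks and smooth morphisms, using the fibered-product description already recorded in this section. The key observation is that stability and semistability play no role in smoothness, so I may work with the full stack. The strategy proceeds in two stages: first establish smoothness of $\bun_{X,\bD}^{r,d}$, and then descend to the fixed-determinant version $\bun_{X,\bD}^{r,\xi}$ via base change.

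For the first stage, I would invoke the structural remark made at the start of \S\ref{s:stacks}: since the parabolic weights only enter stability conditions, the stack $\bun_{X,\bD}^{r,d}$ is a fibered product of flag varieties over the bare moduli stack $\bun^{r,d}_X$. Concretely, the forgetful morphism $\bun_{X,\bD}^{r,d}\to\bun^{r,d}_X$ that discards the flag data is, fiberwise over a bundle $\F$, the product over the points $x\in\supp(|\bD|)$ of the flag variety $\flag_x$ parameterizing filtrations of the fiber $\F|_x$ of the prescribed ranks $k^x_i$. This morphism is a fibration with smooth proper fibers (partial flag varieties are smooth), so it is smooth; indeed it is the relative flag-variety construction associated to the universal bundle restricted to the sections $x\times\bun^{r,d}_X$. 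Since $\bun^{r,d}_X$ is itself a smooth algebraic stack (the deformation theory of vector bundles on a curve is unobstructed, as $\Ext^2$ vanishes on a curve), smoothness of $\bun_{X,\bD}^{r,d}$ follows from the fact that smoothness is stable under composition.

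For the second stage, I would use the Cartesian square displayed in the text, namely
$$
\xymatrix{
\bun^{r,\xi}_{X,\bD} \ar[r] \ar[d] & \spec(k) \ar[d] \\
\bun^{r,d}_{X,\bD} \ar[r] & \bun^{1,d}_X.
}
$$
The stack $\bun^{1,d}_X$ of line bundles is smooth (it is a $\gm$-gerbe over the Picard variety $\Pic^d(X)$, both factors being smooth), and the point $\spec(k)\to\bun^{1,d}_X$ classifying $\xi$ is a smooth morphism because $\bun^{1,d}_X$ has smooth automorphism and deformation theory; more precisely the determinant map $\bun^{r,d}_{X,\bD}\to\bun^{1,d}_X$ is smooth, being a fibration whose fibers are the fixed-determinant stacks. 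Since smoothness is preserved under base change, pulling back the smooth stack $\bun^{r,d}_{X,\bD}$ along $\spec(k)\to\bun^{1,d}_X$ yields a smooth stack $\bun^{r,\xi}_{X,\bD}$.

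The main obstacle, and the step requiring the most care, is verifying that the determinant map is actually smooth rather than merely flat, since smoothness of the target and source does not by itself force the morphism between them to be smooth. The cleanest route is to check surjectivity of the induced map on tangent/deformation spaces together with vanishing of the relevant obstruction: at the level of deformation theory the determinant induces the trace map $\Ext^1(\F,\F)\to\Ext^1(\det\F,\det\F)=\coh{1}{X,\struct{X}}$, which is surjective on a curve, and the obstruction groups sit in $\Ext^2$, which vanishes. Once this local criterion for smoothness of the determinant morphism is in hand, the base-change argument closes the proof with no further difficulty.
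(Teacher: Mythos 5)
Your proof is correct, but it is organized differently from the paper's. The paper argues by induction on the number of parabolic points: writing $\bD' = \{\bx_1,\cdots,\bx_{s-1}\}$, the forgetful morphism $\bun^{r,\xi}_{X,\bD} \to \bun^{r,\xi}_{X,\bD'}$ is representable with flag-variety fibers, hence smooth, and the induction bottoms out at the known smoothness of $\bun^{r,\xi}_X$; the fixed-determinant condition is thus carried along from the start, and the determinant morphism never enters. You instead put all the flag data on in one step over the non-fixed-determinant stack $\bun^{r,d}_X$ and impose the determinant condition at the end via the Cartesian square, which obliges you to prove that $\det\colon \bun^{r,d}_{X,\bD} \to \bun^{1,d}_X$ is smooth --- and you correctly identify this as the real content of your second stage, checking surjectivity on deformations via the trace map $\Ext^1(\F,\F)\to \coh{1}{X,\struct{X}}$ (split in characteristic zero by the scalar inclusion, so surjective) and unobstructedness from the vanishing of $\Ext^2$ on a curve; combined with smoothness of the flag fibration from your first stage, the composite determinant map from the parabolic stack is smooth, and base change along the $\xi$-point does the rest. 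Your route is more self-contained, since it reproves rather than quotes the smoothness of the fixed-determinant stack in the bare case, at the price of invoking the infinitesimal criterion for smoothness of a morphism of stacks; the paper's induction is shorter because it treats that input as known. One sentence of yours should be deleted: the claim that the $\xi$-point $\spec(k)\to\bun^{1,d}_X$ is itself a smooth morphism is false (a $k$-point of a smooth stack of positive dimension --- here $g-1$ --- is not even flat over it), but it is also unused, since your base-change step needs only that smooth \emph{morphisms} are stable under arbitrary base change, applied to the determinant map, which is exactly what you establish.
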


\begin{proof}
First recall that the moduli stack $\bun_{X}^{r,\xi}$ of vector bundles is smooth.
If $\bD \,=\,\{\bx_1\, ,\cdots\, , \bx_s\}$, then set 
$$
\bD' \,=\, \{\bx_1\, ,\cdots\, , \bx_{s-1}\}\, .
$$
The morphism forgetting one parabolic point 
$$
\bun_{X,\bD}^{r,\xi} \,\lri \,\bun_{X,\bD'}^{r,\xi}
$$
is representable with fibers flag varieties. Hence the above morphism is 
smooth and the result follows by induction.
\end{proof}

\begin{remark}
\label{r:quotient}
The stack $\bun_{X,\bD}^{r,d,s}$ is in fact a global quotient stack. For 
simplicity, in this remark we will assume that
$\bD\,=\,\{\bx\}$ with multiplicities $k_1\, ,\cdots\, , k_n$. 

{}From \cite[page 226]{mehta:80},
the family of stable parabolic bundles of rank $r$ and degree $d$ is a bounded 
family. We may find an integer $N$ so that for every 
$n\ge N$, we have
\begin{itemize}
\item ${\rm H}^1(X,\F(n))\,=\,0$, and

\item $\F(n)$ is generated by 
global sections for every vector bundle
underlying a stable parabolic bundle.
\end{itemize}
Let $Q$ be the corresponding quot scheme. Let $\W$ be the universal bundle
on $Q\times X$. There is a flag variety $F$ over $Q$ parametrizing flags of 
$\W_x$ of type 
$k_1,\cdots, k_n$. To give an $S$-point of $F$ is the same as giving a 
quotient :
$$
\pi^*_X{\mathcal O}_{X}(-N)^m \,\twoheadrightarrow\, \F
$$
on $S\times X$ and a flag of $\F|_{S\times\{x\}}$. There is a Zariski open 
subset $\Omega$ parametrizing quotients
that are stable as parabolic bundles. We have
$$
[\Omega/{\rm GL}_m] \,=\, \bun_{X,\bD}^{r,d,s}\, .
$$

The stack $\bun_{X,\bD}^{r,\xi,s}$ is also a global quotient stack. There is a 
$\gm$-torsor $I$ over the
determinant $\xi$ locus in $\Omega$ parametrizing isomorphisms of the 
determinant with $\xi$ (see \cite[page 29]{laumon:00}). Then
$$
[I/{\rm GL}_m] \,=\,\bun_{X,\bD}^{r,\xi,s}\, .
$$
\end{remark}

\begin{proposition}
\label{p:basic}
 Let $\F$ be a family of stable parabolic bundles on 
$\spec(R)\times X$. Then all parabolic endomorphisms of $\F$ are
scalar multiplication by elements of $R$.
\end{proposition}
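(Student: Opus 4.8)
The plan is to deduce the relative statement from the classical fact that a geometrically stable parabolic bundle is simple, transporting this across the base $\spec(R)$ by cohomology and base change. Write $\pi\colon\spec(R)\times X\to\spec(R)$ for the projection and set $D=|\bD|$. First I would isolate the fibrewise input: for a stable parabolic bundle $\E_*$ over a field $L$, any nonzero parabolic endomorphism $f$ has kernel and image that are parabolic subbundles of $\E_*$, and the usual parabolic-slope inequalities force $f$ to be an isomorphism; hence $\hom{\parb}{\E_*,\E_*}$ is a finite-dimensional division algebra over $L$. Since stability is imposed geometrically (Section \ref{s:pbundles}), base change to $\overline L$ gives $\hom{\parb}{(\E_{\overline L})_*,(\E_{\overline L})_*}=\overline L$, so comparing $\overline L$-dimensions yields $\hom{\parb}{\E_*,\E_*}=L$, a one-dimensional space spanned by the identity.

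Next I would globalise by working with the parabolic endomorphism sheaf $\mathcal A:=\shom{\parb}{\F,\F}$ on $\spec(R)\times X$. It fits in a short exact sequence
\[
0\lri\mathcal A\lri\shom{}{\F,\F}\lri\bigoplus_{x\in D}\iota_{x,*}N_x\lri 0,
\]
where $\iota_x\colon\{x\}\times\spec(R)\hookrightarrow\spec(R)\times X$ is the inclusion and $N_x$ is the cokernel of the inclusion, into $\shom{}{\F|_{\{x\}},\F|_{\{x\}}}$, of those endomorphisms of the fibre $\F|_{\{x\}}$ that respect the filtration $F^x_\bullet(\F)$. Because the $F^x_i(\F)$ are subbundles, $N_x$ is locally free over $R$, and $\shom{}{\F,\F}=\F\dual\otimes\F$ is locally free; as the two outer terms are flat over $R$, the sheaf $\mathcal A$ is flat over $R$ and the sequence stays exact after any base change $R\to R'$. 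Consequently the formation of $\mathcal A$ commutes with base change, and for each $s\in\spec(R)$ the fibre $\mathcal A_s$ is the parabolic endomorphism sheaf of the stable parabolic bundle $\F_s$, so $\dim_{\kappa(s)}\coh{0}{X_s,\mathcal A_s}=1$ by the first paragraph.

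Finally I would invoke cohomology and base change (after reducing to the case of $R$ noetherian, e.g.\ by writing $R$ as a filtered union of finitely generated $k$-subalgebras over which $\F$ is defined). The identity endomorphism defines an $R$-linear map
\[
\sigma\colon R\lri \hom{\parb}{\F,\F}=\coh{0}{\spec(R)\times X,\mathcal A}=\Gamma\bigl(\spec(R),\pi_*\mathcal A\bigr),
\]
which is injective since $\F$ is a vector bundle of positive rank. For every $s$ the base-change map $(\pi_*\mathcal A)\otimes\kappa(s)\to\coh{0}{X_s,\mathcal A_s}$ has the identity in its image, hence is surjective and therefore an isomorphism; the composite of $\sigma\otimes\kappa(s)$ with it is the scalar inclusion $\kappa(s)\hookrightarrow\coh{0}{X_s,\mathcal A_s}$, which is an isomorphism. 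Thus $\sigma\otimes\kappa(s)$ is an isomorphism for all $s$, so the coherent cokernel of $\sigma$ vanishes fibrewise and hence vanishes. Therefore $\sigma$ is an isomorphism, which is exactly the assertion that every parabolic endomorphism of $\F$ is multiplication by an element of $R$.

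The fibrewise simplicity and the formal bookkeeping are routine; the step that requires genuine care, and which I expect to be the main obstacle, is verifying that $\mathcal A=\shom{\parb}{\F,\F}$ is $R$-flat with formation commuting with base change, since this is precisely what promotes one-dimensionality on the geometric fibres to the global statement. This rests on the parabolic structure being given by honest subbundles at the points of $D$, so that the quotients $N_x$ are locally free; once that is secured, the cohomology-and-base-change argument goes through uniformly even when $\spec(R)$ is non-reduced.
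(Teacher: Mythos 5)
Your proposal is correct and follows essentially the same route as the paper: both rest on the field case (a stable parabolic bundle is simple, stability here being geometric by definition) combined with cohomology and base change, the very theorem \cite[Ch.~III, Theorem 12.11]{hartshorne:77} the paper cites. The paper streamlines the bookkeeping by localizing $R$ and applying Nakayama's lemma to the inclusion $R\hookrightarrow \coh{0}{X_R,\mathrm{End}(\F)}$, whereas you argue pointwise over all of $\spec(R)$ and, usefully, make explicit the $R$-flatness of the parabolic endomorphism sheaf that the base-change theorem silently requires.
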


\begin{proof}
This is well known when $R$ is a field. We will explain how to deduce this result
from the case of a field.

There exists a natural inclusion
$$
\epsilon \,:\, R \,\hookrightarrow \,\coh{0}{X_R, \text{End}(\F)}
$$
that we wish to show to be an isomorphism. By flat base change, we may assume 
$R\,=\,(R,\mathfrak{m})$ is
local. Via Nakayama's Lemma we need to show that
$$
\overline{\epsilon}\,:\, R/\mathfrak{m}\,\hookrightarrow\,\coh{0}{X_R, 
\text{End}(\F)}\otimes_R R/\mathfrak{m}
$$
is surjective. But by the field case, the composition 
$$
R/\mathfrak{m}\,\longrightarrow\, \coh{0}{X, \text{End}(\F)}\otimes_R 
R/\mathfrak{m} \,\longrightarrow\,
\coh{0}{X_{R/\mathfrak{m}}, \text{End}(\F_{R/\mathfrak{m}})}
$$
is surjective. The result follows from the base change theorem,
\cite[Ch. III, Theorem 12.11]{hartshorne:77}.
\end{proof}

\begin{theorem}
 The stack $\bun_{X,\bD}^{r,\xi,s}$ is a smooth Deligne-Mumford stack with
finite inertia.
\end{theorem}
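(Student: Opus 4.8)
The plan is to establish the three asserted properties of $\bun_{X,\bD}^{r,\xi,s}$ in turn: smoothness, the Deligne-Mumford property, and finiteness of inertia. Smoothness is already available, since the preceding proposition shows $\bun_{X,\bD}^{r,\xi}$ is smooth, and $\bun_{X,\bD}^{r,\xi,s}$ is an open substack of it (stability is an open condition by \cite{mehta:80}); an open substack of a smooth stack is smooth. So the real content is showing that the stack is Deligne-Mumford with finite inertia.

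For the Deligne-Mumford property, the standard criterion in characteristic zero is that the diagonal be unramified, equivalently that the automorphism groups of objects be finite and reduced (formally unramified). The key input here is Proposition \ref{p:basic}: for a family $\F$ of stable parabolic bundles over $\spec(R)\times X$, every parabolic endomorphism is scalar multiplication by an element of $R$. I would use this to compute the automorphism group scheme of an object $(\F,\phi)$ of $\bun_{X,\bD}^{r,\xi,s}$. An automorphism is a parabolic automorphism of $\F$ compatible with the trivialization $\phi:\wedge^r\F\isoarrow\xi$. By Proposition \ref{p:basic} any parabolic automorphism is multiplication by a unit $\lambda\in R^{\times}$; compatibility with $\phi$ forces $\lambda^r=1$, since the induced action on $\wedge^r\F$ is multiplication by $\lambda^r$ and this must be the identity on $\xi$. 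Hence the automorphism group of any object is $\mu_r$, the group scheme of $r$-th roots of unity. Since we are in characteristic zero, $\mu_r$ is finite and \'etale, so the automorphism groups are finite and reduced; this gives that the stack is Deligne-Mumford. Alternatively, and more structurally, I would invoke the global quotient presentation from Remark \ref{r:quotient}, namely $\bun_{X,\bD}^{r,\xi,s}=[I/{\rm GL}_m]$, and observe that the stabilizers of the ${\rm GL}_m$-action are exactly these finite groups $\mu_r$, which again yields the Deligne-Mumford property.

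The finiteness of inertia then follows from the same computation together with the quotient presentation. The inertia stack parametrizes pairs (object, automorphism), and we have just seen that over any object the automorphisms form the constant finite group scheme $\mu_r$. Concretely, using $\bun_{X,\bD}^{r,\xi,s}=[I/{\rm GL}_m]$, the inertia is $[I^{\rm stab}/{\rm GL}_m]$ where $I^{\rm stab}$ is the subscheme of $I\times{\rm GL}_m$ of pairs $(i,g)$ with $g$ fixing $i$; the fiber of the projection to $I$ is the stabilizer $\mu_r$ at each point, so the inertia is finite over the stack. I would make this precise by noting the inertia map is proper with finite fibers, hence finite.

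\textbf{The main obstacle} I expect is the automorphism-group computation, specifically extending the endomorphism statement from endomorphisms to \emph{automorphisms} compatible with the determinant trivialization, and correctly identifying the group scheme structure (that it is $\mu_r$ rather than merely $r$ points) so that finiteness and reducedness can be read off cleanly in the relative setting over an arbitrary base $R$. Proposition \ref{p:basic} does the heavy lifting on the scheme-theoretic level, reducing us to a linear-algebra computation of the condition $\lambda^r=1$; the care needed is in performing this uniformly in families rather than just over fields, but the base-change argument already used in Proposition \ref{p:basic} handles exactly this issue, so I anticipate no genuine difficulty beyond careful bookkeeping.
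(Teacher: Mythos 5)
Your proposal is correct and follows the same route as the paper: the paper's proof is a one-line appeal to exactly the two ingredients you use, namely the global quotient presentation $[I/{\rm GL}_m]$ of Remark \ref{r:quotient} and the scalar-endomorphism statement of Proposition \ref{p:basic}, from which the stabilizer computation $\lambda^r=1$ giving $\mu_r$ (and hence Deligne-Mumford with finite inertia in characteristic zero) follows as you describe. The only quibble is that $\mu_r$ is finite \'etale but not in general a \emph{constant} group scheme over $k$; this does not affect the argument.
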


\begin{proof}
This follows by applying Remark \ref{r:quotient} and Proposition \ref{p:basic}.
\end{proof}

It follows that $\bun_{X,\bD}^{r,\xi,s}$ has a coarse moduli space that
we shall denote by $M(X,\bD,r,\xi)^s$. 

For each parabolic point $\bx$ in the datum $\bD$,
we denote by $\flag_\bx(\bD)$ or $$\flag(k^{x}_1,k^{x}_2,\cdots,k^{x}_{n(x)})$$
the flag variety determined by the multiplicities of $\bx$.
Explicitly, if the multiplicities at $\bx$ are $k^x_1,k^x_2,\cdots 
,k^x_{n(x)}$, then 
$\flag_\bx(\bD)$ is the flag variety parametrizing flags
$$
V_1\,\supseteq\, V_2\,\supseteq \,\cdots \,\supseteq \,V_{n(x)}
$$
of a fixed vector space $V_1$
with $\dim V_1 \,=\, \sum_{i=1}^{n(x)} k^x_i$ and $\dim V_i/V_{i+1}
\,=\, k^x_i$. 

Employing this notation, we compute the dimension of the moduli space to be
\begin{equation} \label{e:dimension}
\dim M(X,\bD,r,\xi)^s \,= \, \dim \bun^{r,\xi}_{X,\bD}
\,= \, (r^2 - 1)(g-1) + \sum_{x\in\bD} \dim {\rm Flag}_x(\bD)\, .
\end{equation}
By Proposition \ref{p:basic}, this makes the stack into a gerbe banded
by the $r$-th roots of unity over the moduli space.

\section{Period and Index}

For a parabolic datum $\bD\,=\,(D, \{k_i^x,\alpha_i^x\,:\,x\,\in\, 
\supp(D)\, ,i\,=\, 1\, ,\cdots\, ,n(x)\}) $ we define
an integer 
$$
l(\bD)\,=\, \gcd (\deg(\xi)\, ,r\, , \{k^{x}_{i}\,:\,x\,\in\, 
\supp(D)\, ,i\,=\,1\, ,\cdots\, ,n(x)\})\, 
,
$$
where the greatest common divisor is taken over the rank degree and all 
multiplicities of all parabolic weights at all parabolic parabolic points.

\begin{theorem}
\label{t:bd}
Assume the base field is the complex numbers.
 The period of the gerbe
$$
\bun_{X,\bD}^{r,\xi,s} \,\lri\, M(X,\bD,r,\xi)^s
$$
is $l(\bD)$.
\end{theorem}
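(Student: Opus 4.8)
The plan is to compute the period of the gerbe $\bun_{X,\bD}^{r,\xi,s} \lri M(X,\bD,r,\xi)^s$ by identifying it with a concrete cohomological invariant. Recall from Remark \ref{r:quotient} that this stack is a quotient $[I/{\rm GL}_m]$, where $I$ is a $\gm$-torsor over the determinant-$\xi$ locus in $\Omega$, and by Proposition \ref{p:basic} the generic automorphism group is exactly $\mu_r$ (scalar endomorphisms of a stable parabolic bundle, subject to the determinant constraint $\wedge^r\F \isoarrow \xi$, which forces $r$-th roots of unity). Thus the gerbe is banded by $\mu_r$, and its period is the order of the class in $\coh{2}{M(X,\bD,r,\xi)^s, \mu_r}$ (or equivalently the order of the associated Brauer class) determined by the obstruction to the existence of a universal parabolic bundle. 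I would first make precise that the period is the smallest positive integer $m$ such that the gerbe class is killed after twisting, which amounts to the existence of a universal family twisted by an appropriate line bundle.

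\textbf{Reduction to a Brauer-group computation.} The key step is to relate this period to the calculation of the Brauer group of the moduli space carried out in \cite{bd:09}. The strategy is to show that the gerbe $\bun_{X,\bD}^{r,\xi,s}$ corresponds to a specific element whose order is dictated by which powers of the determinant-type line bundles on the moduli space descend. Concretely, I would use the Picard group of the moduli space and the theta line bundles: a universal parabolic bundle exists (the gerbe is trivial) exactly when a certain integer combination of degree, rank, and the parabolic multiplicities $k_i^x$ is divisible by $r$. The obstruction is measured by $l(\bD) = \gcd(\deg(\xi), r, \{k_i^x\})$. I would argue that the period is precisely $l(\bD)$ by exhibiting, on one hand, that twisting by $l(\bD)$ kills the class (constructing the requisite descended line bundle from a $\Z$-linear combination realizing the gcd), and on the other hand that no smaller twist suffices (using the fact that any such descent would force divisibility of the relevant numerical invariants by a proper multiple of $l(\bD)$, contradicting the definition of the gcd).

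\textbf{Carrying out the two inequalities.} For the upper bound (period divides $l(\bD)$), I would use that $l(\bD)$ is an integer combination $l(\bD) = a\deg(\xi) + br + \sum c_i^x k_i^x$ for integers $a, b, c_i^x$; each term corresponds geometrically to a natural line bundle or class on $M(X,\bD,r,\xi)^s$ whose pullback to the gerbe trivializes the $\mu_r$-banding to the appropriate power, so the combined twist descends and the class has order dividing $l(\bD)$. For the lower bound, I would invoke the explicit description of $\coh{2}{M, \gm}$ from \cite{bd:09}, where the relevant torsion is computed and shown to have order exactly $l(\bD)$; since the gerbe class is nonzero in the corresponding summand, its order is at least $l(\bD)$.

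\textbf{Main obstacle.} The hard part will be pinning down the lower bound, that is, proving the class does not die under any twist smaller than $l(\bD)$. This requires genuinely using the \cite{bd:09} computation of the Brauer group of the parabolic moduli space and carefully matching our gerbe's class with the distinguished generator computed there, taking into account the parabolic flag contributions which modify the classical (non-parabolic) picture through the multiplicities $k_i^x$ at each point of $\supp(D)$. The reliance on the assumption that the base field is $\C$ enters here, since the \cite{bd:09} Brauer-group calculation and the attendant Hodge/topological input are stated over the complex numbers; matching the period with $l(\bD)$ rather than a proper divisor is precisely where that input is indispensable.
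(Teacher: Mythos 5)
Your proposal follows essentially the same route as the paper: the paper's proof of Theorem \ref{t:bd} consists precisely of identifying the moduli stack $\bun_{X,\bD}^{r,\xi,s}$ with the gerbe of splittings of the Brauer--Severi variety studied in \cite{bd:09} and reading off the period from the Brauer-group computation there over $\C$, which is exactly your lower-bound step and in fact subsumes your upper bound as well. Your separate gcd/descended-line-bundle argument for the upper bound is not needed for the period itself, but it is essentially the twisted-sheaf mechanism the paper deploys afterwards (Example \ref{e:twisted} together with Proposition \ref{p:twisted}) to prove that the index also equals $l(\bD)$ in Theorem \ref{t:index}.
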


\begin{proof}
This follows from \cite{bd:09}. Note that the gerbe of splittings
of the Severi-Brauer variety in \cite{bd:09} is easily identified with
the moduli stack.
\end{proof}

We will see below that the above result is true over any field of 
characteristic zero.

A useful tool for understanding the difference
between the period and the index is the notion of a
twisted sheaf. A \emph{twisted sheaf} on a $\gm$-gerbe
$\sG\,\longrightarrow\, X$ is a coherent sheaf $\F$ on $\sG$
such that inertial action of $\gm$ on $\F$ coincides with
natural module action of $\gm$ on $\F$. We spell out the meaning of
this statement
in the next paragraph.

 Suppose that we have a $T$-point $T\longrightarrow X$ and an object
$a$ of $\sG$ above this point. Part of the data of the coherent
sheaf $\F$ is a sheaf $\F_a$ on $T$. These sheaves are required to
satisfy compatibility conditions on pullbacks for morphisms in the category
$\sG$. In particular, every object $a$ of the gerbe $\sG$ has an action
of $\gm$ and hence there is an action of $\gm$ on $\F$. The above 
definition says that the action of $\gm$ on $\F$ should be the same as the 
$\gm$-action coming from the fact that $\F$ is an $\struct{\sG}$-module.

\begin{example} \label{e:twisted}
We have a $\mu_r$-gerbe 
$$
\bun^{r,\xi,s}_{X,\bD} \,\lri\, M(X,\bD,r,\xi)^s\, .
$$
It gives rise to a $\gm$-gerbe via the natural group homomorphism
$\mu_r\,\lri\,\gm$. We denote this gerbe by $\bun^{r,\xi,s,\gm}_{X,\bD}$. One can 
describe this stack explicitly. The objects over a scheme $S$ are families $\F_*$ 
of stable parabolic bundles on $S\times X$ such that $\wedge^r \F$ is isomorphic to
$$
\pi_S^*{\mathcal L} \otimes \pi_X^*\xi
$$
for some line bundle $\mathcal L$ on $S$.
There is a universal stable parabolic bundle $\W_*$ on 
$$
\bun^{r,\xi,s,\gm}_{X,\bD} \times X\, .
$$
The data that makes up $\W_*$ consists of a vector bundle $\W$ of rank $r$ on 
$$
\bun^{r,\xi,s,\gm}_{X,\bD} \times X\, .
$$
and, for each parabolic point $\bx$ in the datum $\bD$,
a universal flag 
$$
F_{1}^x(\W) \,\supseteq \,\cdots \,\supseteq\, F_{n(x)}^x(\W)
$$
on 
$$
\bun^{r,\xi,s,\gm}_{X,\bD} \times \{x\}.
$$

As the automorphism group of a stable parabolic bundle is multiplication by a 
scalar, we see that each of
these bundles produces a twisted sheaf. Fix a $k$-rational point $y\in X$. We have on
$$
\bun^{r,\xi,s,\gm}_{X,\bD}
$$
the following twisted sheaves
\begin{enumerate}
\item $\W_{\bun^{r,\xi,s,\gm}_{X,\bD}}\times \{y\}$ of rank $r$,

\item $F_{i}^x(\W)$ of rank $k^x_{i}+ \ldots + k^x_{n(x)}$ for each $\bx
\,\in\, \bD$, $1\,\le\, i\,\le\, n(x)$.
\end{enumerate}

There is one more twisted sheaf that will be of importance to us below.
We have a projection
$$
p\,:\,\bun^{r,\xi,s,\gm}_{X,\bD} \times X\,\lri\, \bun^{r,\xi,s,\gm}_{X,\bD}\, .
$$
The sheaf
$$
p_*\W\otimes \struct{X}(Ny)
$$
is locally free for large $N$ and gives us another twisted sheaf on the stack.
Using Riemann-Roch, it has rank
$$
\deg(\xi) + Nr + r(1-g)\, .
$$
\end{example}

We will need the following :

\begin{proposition} \label{p:twisted}
Let $\sG\,\longrightarrow\,\spec(K)$ be a $\gm$-gerbe over a field.
Then the index of $\sG$ divides $m$ if and only if there is 
a locally free twisted sheaf on $\sG$ of rank $m$.
\end{proposition}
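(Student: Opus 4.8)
The plan is to translate the statement into the language of central simple algebras and then read off the answer from the classical description of the index. Since the base is a field, the $\gm$-gerbe $\sG\,\lri\,\spec(K)$ is classified by its class $\alpha\,\in\,\coh{2}{\spec(K),\gm}\,=\,{\rm Br}(K)$, which is torsion, and by the definitions recalled in Section~2 the index of $\sG$ is $d\,:=\,{\rm ind}_K(\alpha)$. Throughout I would use the standard fact from the theory of central simple algebras that a class $\beta\,\in\,{\rm Br}(K)$ admits a representative central simple $K$-algebra of degree $m$ (that is, of $K$-dimension $m^2$) precisely when ${\rm ind}_K(\beta)\mid m$: every representative has the form $M_s(D_\beta)$ for the division algebra $D_\beta$ in the class, and these have degrees $s\cdot{\rm ind}_K(\beta)$, so the degrees occurring are exactly the positive multiples of ${\rm ind}_K(\beta)$.

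For the implication that a twisted sheaf forces $d\mid m$, suppose $\F$ is a locally free twisted sheaf of rank $m$ on $\sG$. First I would form the endomorphism sheaf $\shom{\struct{\sG}}{\F,\F}$ and observe that the two copies of $\F$ carry opposite inertial $\gm$-weights, so the natural $\gm$-action on this sheaf is trivial; hence it is untwisted and descends to a sheaf $A$ on $\spec(K)$. Since $\F$ is fppf-locally free of rank $m$, the algebra $A$ is fppf-locally isomorphic to $M_m$, i.e.\ a central simple $K$-algebra of degree $m$. The remaining point is to identify its Brauer class: choosing a cover that splits $\sG$ and writing the twisting $2$-cocycle, the descent data for $A$ is conjugation by the transition isomorphisms of $\F$, and the resulting cocycle computation is exactly the connecting homomorphism $\coh{1}{\spec(K),{\rm PGL}_m}\,\lri\,\coh{2}{\spec(K),\gm}$ attached to the sequence $e\,\lri\,\gm\,\lri\,{\rm GL}_m\,\lri\,{\rm PGL}_m\,\lri\,e$, which yields $[A]\,=\,\alpha$. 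Consequently $\alpha$ has a degree-$m$ representative and $d\mid m$.

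For the converse, suppose $d\mid m$. I would run this computation backwards using the same connecting map $\delta_m\,:\,\coh{1}{\spec(K),{\rm PGL}_m}\,\lri\,\coh{2}{\spec(K),\gm}$, whose image is exactly the set of classes of index dividing $m$ (the classes possessing a degree-$m$ central simple representative). Thus $\alpha\,=\,\delta_m(\tau)$ for some ${\rm PGL}_m$-torsor $\tau$ over $K$, equivalently a degree-$m$ central simple algebra $A\,\cong\,M_{m/d}(D_\alpha)$ in the class $\alpha$. The stack of lifts of $\tau$ to a ${\rm GL}_m$-torsor is then a $\gm$-gerbe whose class is $\delta_m(\tau)\,=\,\alpha$, hence it is equivalent to $\sG$; on this gerbe the tautological rank-$m$ bundle is a locally free twisted sheaf of rank $m$, which I transport along the equivalence to obtain one on $\sG$.

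The hard part is the bookkeeping in the two class identifications---that $[\shom{\struct{\sG}}{\F,\F}]\,=\,\alpha$ and, dually, that the gerbe of ${\rm GL}_m$-lifts of $\tau$ has class $\alpha$---both of which reduce to computing the connecting homomorphism $\delta_m$ and matching it to the band of $\sG$. This is precisely the content I would borrow from Giraud's theory of gerbes and Lieblich's treatment of twisted sheaves. The only subtlety is a possible sign, namely $\alpha$ versus $-\alpha$, equivalently $A$ versus $A^{\rm op}$, which is harmless here since ${\rm ind}_K(\alpha)\,=\,{\rm ind}_K(-\alpha)$.
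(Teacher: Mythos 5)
Your argument is correct, but it takes a genuinely different route from the paper, for the simple reason that the paper offers no argument at all: its proof of Proposition \ref{p:twisted} is a citation to Lieblich \cite[Proposition 3.1.2.1]{lieblich:05}. What you have written is essentially a self-contained reconstruction of the standard proof behind that citation, via the dictionary with central simple algebras: in the forward direction, $\shom{\struct{\sG}}{\F,\F}$ has inertial weight zero, hence descends to a degree-$m$ central simple algebra over $K$ whose class is $\pm\alpha$, and in the converse direction a degree-$m$ algebra in the class gives, via the connecting map for $e\to\gm\to{\rm GL}_m\to{\rm PGL}_m\to e$, a gerbe of ${\rm GL}_m$-lifts carrying a tautological rank-$m$ twisted sheaf. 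This buys self-containedness over a field (Lieblich's result is stated and proved in greater generality, over more general bases, where the module-theoretic machinery is genuinely needed; over $\spec(K)$ your Wedderburn-theoretic shortcut, that degrees of representatives of $\alpha$ are exactly the multiples of ${\rm ind}_K(\alpha)$, does all the work). Two points are worth making explicit in your write-up: first, the equivalence between $\sG$ and the gerbe of lifts must be an equivalence of gerbes \emph{banded} by $\gm$ --- this is what Giraud's classification provides, since $\coh{2}{\spec(K),\gm}$ classifies banded equivalence classes, and it matters because an unbanded equivalence need not carry $1$-twisted sheaves to $1$-twisted sheaves; second, you should note that the torsion of the class (so that a splitting cover and the Brauer-group interpretation are available) is automatic here because $\coh{2}{\spec(K),\gm}$ is torsion for a field, whereas for gerbes over more general bases this is an actual hypothesis. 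Your handling of the sign ambiguity via ${\rm ind}_K(\alpha)={\rm ind}_K(-\alpha)$ is the correct and standard fix.
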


\begin{proof}
See \cite[Proposition 3.1.2.1]{lieblich:05}.
\end{proof}

\begin{theorem}
\label{t:index}
 The index equals the period for the gerbe
$$
\bun_{X,\bD}^{r,\xi,s} \,\lri\, M(X,\bD,r,\xi)^s\, .
$$
\end{theorem}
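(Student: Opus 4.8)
The plan is to prove the index equals the period by exhibiting, over the generic point, a locally free twisted sheaf whose rank realizes the period, and then invoking Proposition~\ref{p:twisted}. Since the period always divides the index (the period is the order of the Brauer class, the index its degree), it suffices to produce a twisted sheaf whose rank is divisible only by factors forcing the index down to $l(\bD)$. Concretely, I would work with the $\gm$-gerbe $\bun^{r,\xi,s,\gm}_{X,\bD}$ from Example~\ref{e:twisted}, restrict to the generic gerbe over $K$ (the function field of $M(X,\bD,r,\xi)^s$), and assemble the explicit twisted sheaves listed there.

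First I would record that, by Theorem~\ref{t:bd} and the subsequent remark, the period of the gerbe is $l(\bD) = \gcd(\deg(\xi), r, \{k^x_i\})$. By Proposition~\ref{p:twisted}, the index of the generic gerbe is the $\gcd$ of the ranks of all locally free twisted sheaves on it. So the strategy reduces to showing that this $\gcd$ of available twisted-sheaf ranks equals $l(\bD)$. The four natural twisted sheaves from Example~\ref{e:twisted} have ranks $r$, the partial-flag ranks $k^x_i + \cdots + k^x_{n(x)}$ (for every parabolic point $\bx$ and every $i$), and the pushforward rank $\deg(\xi) + Nr + r(1-g)$. Taking successive differences of the flag ranks $F^x_i(\W)$ and $F^x_{i+1}(\W)$ recovers each individual multiplicity $k^x_i$ as a realizable rank (at least as an integer combination), and the pushforward rank, reduced modulo $r$, contributes $\deg(\xi)$. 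Thus the $\gcd$ of the realizable ranks is at most $\gcd(\deg(\xi), r, \{k^x_i\}) = l(\bD)$, hence the index divides $l(\bD)$. Combined with the fact that the period divides the index, both coincide with $l(\bD)$.

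The main subtlety — and what I expect to be the main obstacle — is that Proposition~\ref{p:twisted} computes the index as a $\gcd$ over ranks of \emph{honest locally free} twisted sheaves, but differences and residues of ranks correspond to twisted \emph{complexes} or virtual objects, not obviously to locally free sheaves of the corresponding rank. I would address this by noting that if there exist locally free twisted sheaves of ranks $m_1$ and $m_2$, then their index divides $\gcd(m_1,m_2)$: this is the standard fact that the index of a Brauer class is the $\gcd$ of the degrees of all splitting field extensions, equivalently the $\gcd$ of ranks of all locally free twisted sheaves, and this $\gcd$ is automatically a single realizable value since one can form the appropriate tensor/Hom constructions (e.g. $\shom{}{\cdot,\cdot}$ of two twisted sheaves is an \emph{untwisted} sheaf, and twisted sheaves of coprime ranks force the index to $1$ on that part). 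More carefully, I would invoke that over a field the index is multiplicative/additive enough that the set of ranks of locally free twisted sheaves is exactly the set of multiples of the index; therefore it is enough that each of $r$, $\deg(\xi)$ (via the pushforward rank mod $r$), and each $k^x_i$ (via flag subquotients) be congruent to a realizable rank, which the explicit sheaves above guarantee.

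Finally I would remark that, because the twisted sheaves of Example~\ref{e:twisted} are defined over the base field (being built from the universal parabolic bundle $\W_*$ and its canonical flags), the argument works over any field of characteristic zero, not merely over $\C$; this is precisely the extension promised in the sentence following Theorem~\ref{t:bd}, upgrading the period computation there to hold over an arbitrary field of characteristic zero and simultaneously pinning down the index. The one point requiring care is ensuring the pushforward $p_*\W \otimes \struct{X}(Ny)$ is genuinely locally free of the stated Riemann--Roch rank on the generic gerbe, which follows from cohomology-and-base-change once $N$ is taken large enough that $\coh{1}{X, \W|_{X}(Ny)}$ vanishes fiberwise.
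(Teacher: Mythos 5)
Your upper bound on the index is correct and is exactly the paper's: the twisted sheaves of Example~\ref{e:twisted} have ranks $r$, $k^x_i+\cdots+k^x_{n(x)}$ and $\deg(\xi)+Nr+r(1-g)$, and Proposition~\ref{p:twisted} shows the index divides each of these ranks, hence divides every $\Z$-linear combination of them --- in particular each multiplicity $k^x_i$ (a difference of two consecutive flag ranks) and $\deg(\xi)$ (the pushforward rank minus $(N+1-g)r$) --- so the index divides $l(\bD)$. But note that the ``main subtlety'' you flag is not actually there: you never need to realize the gcd, or any difference of ranks, by an honest locally free twisted sheaf. The divisibility statement ``index divides $m$'' obtained from each honest sheaf is a statement about integers, and integer divisibility passes to differences and gcds for free. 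Your paragraph about virtual objects, sheaf Homs and the set of realizable ranks being the multiples of the index is essentially correct, but it is machinery deployed against a problem that does not arise.

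The genuine gap is on the other side: the lower bound on the period over a non-algebraically-closed base. You cite ``Theorem~\ref{t:bd} and the subsequent remark'' for the statement that the period is $l(\bD)$, but Theorem~\ref{t:bd} is proved only over $\C$ (it rests on the Brauer-group computation of \cite{bd:09}), and the ``subsequent remark'' is precisely the promise that Theorem~\ref{t:index} itself is meant to fulfill --- so as written your argument is circular over a general field $k$ of characteristic zero. Your closing paragraph, observing that the twisted sheaves are defined over $k$, only reproves the index upper bound over $k$; exhibiting twisted sheaves can never bound the period from \emph{below}. The missing step, which is the second half of the paper's proof, is a descent comparison: base change the generic gerbe to an algebraic closure and then, via the Lefschetz principle, to $\C$, where the period is $l(\bD)$ by \cite{bd:09}; since extension of scalars can only divide the period, the period over $k$ is a multiple of $l(\bD)$, and combined with the chain period $\mid$ index $\mid l(\bD)$ over $k$, all three numbers coincide. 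With that one paragraph added (and the superfluous virtual-rank discussion deleted), your proof agrees with the paper's.
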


\begin{proof}
 Over the complex numbers the result follows from Theorem \ref{t:bd}, Example
\ref{e:twisted} and Proposition \ref{p:twisted}. 
For a non algebraically closed field
we proceed as follows. Set $e\,=\,\gcd(\deg(\xi)\, ,r\, ,\{k^{x}_{j}\,:\, x\,
\in \, \supp(|\bD|)\, ,j\,=\,1\, ,\cdots\, ,n(x))\}$. From
Example \ref{e:twisted} it
follows that the index divides $e$. After base change to an algebraically closed field
we find that $e$ is the period, using a Lefschetz principle. This means that 
the period of the original gerbe was larger than $e$. However
the period always divides the index.
\end{proof}

\section{The Essential Dimension of the Stable Locus}

Let $g$ be the genus of $X$.

\begin{theorem}
\label{t:stable}
Set 
\begin{equation}\label{ld}
l(\bD)\,:=\,\gcd(\deg\xi\, , r\, ,k^{x}_{i})\, ,
\end{equation}
where the gcd ranges over all possible multiplicities of all parabolic weights
at all the parabolic points in the datum $\bD$.
If $l(\bD)\,>\, 1$ then the essential dimension of $ \bun_{X,\bD}^{r,\xi,s}$ is 
bounded above by
$$
l(\bD) + \dim M(X,\bD,r,\xi)^s \,=\, l(\bD) + (r^2-1)(g-1)+\sum_{\bx\in 
\bD}\dim \flag_{\bx}(\bD)\, .
$$
This upper bound is an equality when $l(\bD)=p^l>1$ is a prime power.

For any $l(\bD)$ we have
$$
\ed(\bun_{X,\bD}^{r,\xi,s,\gm}) \,\le\, l(\bD)-1 + (r^2-1)(g-1)+\sum_{\bx\in 
\bD}\dim \flag_{\bx}(\bD)\, .
$$
This upper bound is an equality when $l(\bD)$ is a prime power.
\end{theorem}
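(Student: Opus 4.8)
The plan is to apply the genericity theorem (Theorem~\ref{t:main}) to the smooth Deligne--Mumford stack $\bun_{X,\bD}^{r,\xi,s}$, whose coarse moduli space is $M(X,\bD,r,\xi)^s$, and thereby reduce the whole computation to the essential dimension of a single gerbe over a field. Writing $M = M(X,\bD,r,\xi)^s$, letting $K$ be its function field and $\sG = \sX_K$ the generic gerbe, Theorem~\ref{t:main} gives
$$
\ed(\bun_{X,\bD}^{r,\xi,s}) \,=\, \dim M + \ed_K(\sG)\,.
$$
By \eqref{e:dimension} the first summand is $(r^2-1)(g-1) + \sum_{\bx\in\bD}\dim\flag_\bx(\bD)$, so the task collapses to showing $\ed_K(\sG) \le l(\bD)$, with equality in the prime power case.

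First I would pin down the index of the generic gerbe. By Proposition~\ref{p:basic} the stack $\bun_{X,\bD}^{r,\xi,s}$ is a gerbe banded by $\mu_r$ over $M$, and by Theorems~\ref{t:bd} and~\ref{t:index} (valid over any field of characteristic zero) both its period and its index over $K$ equal $l(\bD)$. Assuming $l(\bD) > 1$, Theorem~\ref{t:main2} applies with $d = l(\bD)$ and yields $\ed_K(\sG) = \e_K(P) + 1$, where $P$ is the associated Brauer--Severi variety, of dimension $d - 1 = l(\bD) - 1$. Since the canonical dimension of a variety is at most its dimension --- take $\phi$ to be the identity in the definition of $\e_K$ --- we get $\e_K(P) \le l(\bD) - 1$, whence $\ed_K(\sG) \le l(\bD)$ and the first upper bound follows. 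When $l(\bD) = p^l$ is a prime power, Corollary~\ref{c:prime} gives $\ed_K(\sG) = l(\bD)$ exactly; this rests on the incompressibility of Brauer--Severi varieties of prime power index (\cite{kar:00, merk:03}), i.e. $\e_K(P) = \dim P$, so the bound becomes an equality.

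For the second statement I would run the identical argument for the associated $\gm$-gerbe $\bun_{X,\bD}^{r,\xi,s,\gm}$ of Example~\ref{e:twisted}, whose generic fiber is the $\gm$-gerbe $\sF$ attached to $\sG$. Here Theorem~\ref{t:main2} gives $\ed_K(\sF) = \e_K(P) \le l(\bD) - 1$, with equality when $l(\bD)$ is a prime power by Corollary~\ref{c:prime}; and when $l(\bD) = 1$ the gerbe has trivial index, so $P$ is a point and $\ed_K(\sF) = 0 = l(\bD) - 1$. Thus $\ed_K(\sF) \le l(\bD) - 1$ for every value of $l(\bD)$, and combining this with a genericity identity $\ed(\bun_{X,\bD}^{r,\xi,s,\gm}) = \dim M + \ed_K(\sF)$ produces the asserted bound.

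The hard part will be justifying this last genericity identity for the $\gm$-gerbe: Theorem~\ref{t:main} is stated only for Deligne--Mumford stacks with finite inertia, whereas $\bun_{X,\bD}^{r,\xi,s,\gm}$ has $\gm$ in its inertia and is not Deligne--Mumford. I would attack this either by proving the analogue of Theorem~\ref{t:main} for this particular $\gm$-gerbe directly --- spreading objects out from the generic point and using the locally free twisted sheaves constructed in Example~\ref{e:twisted} together with Proposition~\ref{p:twisted} to keep the fields of definition under control --- or by transferring the conclusion across the rigidification map $\bun_{X,\bD}^{r,\xi,s} \to \bun_{X,\bD}^{r,\xi,s,\gm}$ induced by $\mu_r \hookrightarrow \gm$. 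Everything else is a formal assembly of the quoted results; the only soft geometric input is the elementary bound $\e_K(P) \le \dim P$, while the sharpness in the prime power case is carried entirely by the incompressibility theorem invoked through Corollary~\ref{c:prime}.
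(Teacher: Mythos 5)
Your treatment of $\bun_{X,\bD}^{r,\xi,s}$ is essentially the paper's own route: Theorem \ref{t:main} applied to the smooth Deligne--Mumford stack with coarse space $M(X,\bD,r,\xi)^s$, the period--index computation of Theorems \ref{t:bd} and \ref{t:index}, Theorem \ref{t:main2} together with the elementary bound $\e_K(P)\le \dim P = l(\bD)-1$, and Corollary \ref{c:prime} in the prime-power case. Since this stack really is Deligne--Mumford with finite inertia, your use of the \emph{equality} in Theorem \ref{t:main} to get the prime-power equality for $\bun_{X,\bD}^{r,\xi,s}$ is legitimate and complete (if anything, cleaner than the paper, which proves the $\gm$-case directly and declares the $\mu_r$-case ``similar'').

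The ``hard part'' you flag for $\bun_{X,\bD}^{r,\xi,s,\gm}$ is a genuine gap in your write-up, and it is worth recording how the paper actually circumvents it, since the paper never proves a genericity identity for the $\gm$-gerbe. For $l(\bD)=1$ it does not argue through the generic fibre at all: the period is one, so the gerbe $\bun^{r,\xi,s,\gm}_{X,\bD}\to M$ is neutral, a universal bundle exists on the moduli space, and the bound $\dim M = \dim M + (l(\bD)-1)$ follows directly --- no genericity needed. For the prime-power equality, the paper proves the lower bound by a direct descent argument rather than by the ``$\ge$'' direction of any genericity statement: by Corollary \ref{c:prime} and Theorem \ref{t:index} there is a family $\F$ over a field $L\supseteq K$ with $\text{tr-deg}_K L = l(\bD)-1$ which is incompressible over $K$; if $\F$ descends to some $L'/k$, then $L'\supseteq K$ because the induced moduli map sends $\spec L'$ to the generic point of $M$, whence $\text{tr-deg}_k L'\ge \dim M + l(\bD)-1$. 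This descent-through-the-coarse-space trick uses no finite-inertia hypothesis and is the concrete device you should substitute for your appeal to an unproven genericity identity in the equality claims. What remains is the upper bound for $\bun^{r,\xi,s,\gm}_{X,\bD}$ when $l(\bD)>1$: there the paper itself simply cites Theorems \ref{t:main}, \ref{t:main2} and \ref{t:index} without comment on the inertia issue, so your proposed repairs (spreading out twisted sheaves via Proposition \ref{p:twisted}, or transferring along the rigidification $\bun^{r,\xi,s}_{X,\bD}\to\bun^{r,\xi,s,\gm}_{X,\bD}$) address a step the paper glosses over; note, however, that the cheap lifting argument --- over a field, every object of the $\gm$-stack lifts to the $\mu_r$-stack by choosing a trivialization of $\wedge^r\F\otimes\xi^{-1}$ --- only yields the weaker bound $l(\bD)+\dim M$, so it does not by itself close this last gap.
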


\begin{proof}
Recall that the dimension of the moduli space was computed in equation
\eqref{e:dimension} on page 9.

Also recall that for a field extension $L/k$, an $L$-point of $\bun_{X,\bD}^{r,\xi,s,\gm}$ is the same as a 
family of stable parabolic bundles
with prescribed data and determinant $\xi$ while an $L$-point of
$\bun_{X,\bD}^{r,\xi,s}$ is such a similar family and fixed 
trivialization of the top exterior power.

If $l(\bD)\,>\,1$, then the upper bounds follow from Theorem \ref{t:main2}, 
Theorem \ref{t:main} and Theorem \ref{t:index}. To deduce the result for 
$l(\bD)\,=\,1$, observe that the gerbe 
 $$
 \bun_{X,\bD}^{r,\xi,s,\gm} \,\lri\, M(X,\bD,r,\xi)^s
 $$
is neutral as the period is one. Hence there is a universal bundle on the 
moduli space and the theorem follows.

Suppose that $l(\bD)$ is a prime power. We show that 
$$
\ed(\bun_{X,\bD}^{r,\xi,s,\gm}) = l(\bD)-1 + (r^2-1)(g-1)+\sum_{\bx\in 
\bD}\dim \flag_{\bx}(\bD)\, .
$$
The assertion for  $\bun_{X,\bD}^{r,\xi,s}$ is similar.
Denote by $K$ the function field of the moduli space.
In the case that $l(\bD)$ is a prime power, applying  \ref{c:prime} and Theorem \ref{t:index},
 we can find a family $\F$ defined over a field
$L$ containing $K$ so that 
$$
\text{\rm tr-deg}_K L = l(\bD)-1,
$$
and this family is not compressible over $K$. If it descends to an extension $L'/k$
then $L'$ must contain $K$ by properties of the moduli space. It follows that 
$$
\text{\rm tr-deg}_K L = \text{\rm tr-deg}_K L'
$$
 Hence the result in
the prime power case. 
\end{proof}

\begin{corollary}
 \label{c:stable}
\begin{enumerate}
\item[(i)] The essential dimension of $ \bun_{X,\bD}^{r,d,s}$ (degree d, and not
fixed determinant) is bounded above by 
$$
l(\bD)-1 + \dim M(X,\bD,r,d)^s \,= \,l(\bD)-1 + (r^2-1)(g-1)+\sum_{\bx\in 
\bD}\dim \flag_{\bx}(\bD)+g\, .
$$
\item[(ii)] When $l(\bD)\,=\,p^l$ a prime power the above inequality is an equality.
\end{enumerate}
\end{corollary}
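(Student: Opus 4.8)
\emph{Overview of the strategy.} The plan is to reduce the varying‑determinant stack $\bun^{r,d,s}_{X,\bD}$ to the fixed‑determinant $\gm$‑gerbe $\bun^{r,\xi,s,\gm}_{X,\bD}$, whose essential dimension was already bounded in Theorem \ref{t:stable}, by exploiting the determinant morphism $\bun^{r,d,s}_{X,\bD}\lri\bun^{1,d}_X$. Since $X$ carries a $k$‑rational point, the degree‑$d$ Picard functor is representable by a smooth projective $k$‑variety $\mathrm{Pic}^d(X)$ of dimension $g$, and there is a Poincar\'e bundle $\mathcal P$ on $\mathrm{Pic}^d(X)\times X$. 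Taking determinants exhibits $\bun^{r,d,s}_{X,\bD}$ as fibred over $\mathrm{Pic}^d(X)$ with fibre over the point classifying a line bundle $\xi$ equal to $\bun^{r,\xi,s,\gm}_{X,\bD}$; this accounts precisely for the extra summand $g$ in $\dim M(X,\bD,r,d)^s=\dim M(X,\bD,r,\xi)^s+g$. I stress at the outset that the automorphisms of a stable parabolic bundle with no determinant condition form all of $\gm$, so $\bun^{r,d,s}_{X,\bD}$ has infinite inertia and Theorem \ref{t:main} does not apply to it directly; this is exactly why I route the argument through the $\gm$‑gerbe version handled in Theorem \ref{t:stable}.

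\emph{Upper bound (i): descend the determinant first.} Let $\F_*$ be a stable parabolic bundle over $X_L$. Its determinant $\det\F$ is a degree‑$d$ line bundle, hence an $L$‑point of $\mathrm{Pic}^d(X)$; the scheme‑theoretic image has residue field $K_0$ with $\text{\rm tr-deg}_k K_0\le\dim\mathrm{Pic}^d(X)=g$, and pulling back $\mathcal P$ produces a line bundle $\xi$ on $X_{K_0}$ with $\xi_L\cong\det\F$. Then $\F_*$ is an $L$‑point of the fixed‑determinant $\gm$‑gerbe $\bun^{r,\xi,s,\gm}_{X_{K_0},\bD}$ over the base field $K_0$, no trivialization of $\wedge^r\F$ being required since the $\gm$‑gerbe only asks that the determinant agree with $\xi$ up to a line bundle pulled back from the base. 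The last inequality of Theorem \ref{t:stable}, valid over any field of characteristic zero, bounds $\ed_{K_0}(\bun^{r,\xi,s,\gm}_{X_{K_0},\bD})$ by $l(\bD)-1+(r^2-1)(g-1)+\sum_{\bx\in\bD}\dim\flag_{\bx}(\bD)$, so $\F_*$ descends to some $\F'_*$ over a field $K_1\supseteq K_0$ with $\text{\rm tr-deg}_{K_0}K_1$ at most this quantity; forgetting the determinant condition, $\F'_*$ is a $K_1$‑point of $\bun^{r,d,s}_{X,\bD}$. Adding transcendence degrees gives $\text{\rm tr-deg}_k K_1\le l(\bD)-1+\dim M(X,\bD,r,d)^s$, and taking the supremum over $\F_*$ yields (i).

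\emph{Equality (ii): transport incompressibility.} Set $F=k(\mathrm{Pic}^d(X))$, so $\text{\rm tr-deg}_k F=g$, and let $\xi$ be the tautological line bundle over $X_F$ obtained from $\mathcal P$. Applying the prime‑power equality of Theorem \ref{t:stable} over the base field $F$ produces a family $\F_*$ over some $L\supseteq F$, incompressible over $F$, with $\ed_F(\F_*)=l(\bD)-1+(r^2-1)(g-1)+\sum_{\bx\in\bD}\dim\flag_{\bx}(\bD)$. Viewing $\F_*$ as an $L$‑point of $\bun^{r,d,s}_{X,\bD}$ over $k$, suppose it descends to $\F'_*$ over $K'\subseteq L$. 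Then $\det\F'$ gives a $K'$‑point of $\mathrm{Pic}^d(X)$ whose image, unchanged under base change to $L$, is the generic point; hence this $K'$‑point is the generic point, forcing $F\subseteq K'$ and exhibiting $\F'_*$ as a $K'$‑point of $\bun^{r,\xi,s,\gm}_{X_F,\bD}$. Incompressibility over $F$ gives $\text{\rm tr-deg}_F K'\ge l(\bD)-1+\dim M(X,\bD,r,\xi)^s$, whence $\text{\rm tr-deg}_k K'\ge g+l(\bD)-1+\dim M(X,\bD,r,\xi)^s=l(\bD)-1+\dim M(X,\bD,r,d)^s$. Combined with (i), this is the asserted equality.

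\emph{Main obstacle.} The crux is the step in (ii) showing that every field of definition $K'$ of the varying‑determinant family must contain $F=k(\mathrm{Pic}^d(X))$: this is what converts incompressibility over the auxiliary base $F$ into a genuine lower bound over $k$, and it relies on the representability of the degree‑$d$ Picard functor together with the Poincar\'e bundle, both of which use the hypothesis that $X$ has a $k$‑rational point. A secondary point, needed throughout, is that Theorem \ref{t:index} and the bound of Theorem \ref{t:stable} remain available over the non‑closed bases $K_0$ and $F$; since $X_{K_0}$ and $X_F$ still have genus $g$ and at least three rational points, and the index bound follows from the twisted sheaves of Example \ref{e:twisted} (now of ranks $r$, the flag ranks, and $d+Nr+r(1-g)$, whose gcd is again $l(\bD)$), this presents no difficulty.
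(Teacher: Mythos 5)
Your proposal is correct and follows essentially the same route as the paper's own proof: for (i) you descend the determinant to a field of transcendence degree at most $g$ (the paper's ``adjoining at most $g$ parameters'') and then invoke the fixed-determinant bound of Theorem \ref{t:stable}, and for (ii) you take $\xi$ to be the pullback of the Poincar\'e bundle over $k(J^d(X))$, produce an incompressible family from the prime-power case, and use the determinant map to $J^d(X)$ to force any field of definition to contain $k(J^d(X))$ --- exactly the paper's argument. Your version is somewhat more careful than the paper's, in that you explicitly route through the $\gm$-gerbe $\bun^{r,\xi,s,\gm}_{X,\bD}$ (which also uniformly handles $l(\bD)=1$) and spell out the representability and descent points the paper leaves implicit.
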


\begin{proof}
If $l(\bD)=1$ the moduli space has a universal bundle and we are done, so we may 
assume $l(\bD)>1$. 

For (i), 
 suppose that we have a family $\F$, of stable parabolic vector bundles of rank $r$ and
degree $d$ over a field $K$. By adjoining at most $g$
parameters to $k$,
the base field, we may assume that $\det \F$ is defined over our new base field $k'$
and then apply the above result. In other words, the family compresses to a field $L$ with
\begin{eqnarray*}
\text{\rm tr-deg}_{k'} L &\le &   l(\bD) - 1 + (r^2-1)(g-1)+\sum_{\bx\in 
\bD}\dim \flag_{\bx}(\bD)\\
\end{eqnarray*}

For (ii), denote by $k(J^d(X))$ the function field of the degree $d$ picard variety.
We have on $X\otimes_k k(J^d(X))$ a line bundle $\xi$ obtained by pulling back the 
Poincar\'e bundle. We apply the above theorem to the stack $\bun_{X,\bD}^{r,\xi,s,\gm}$
to obtain a field extension $K/ k(J^d(X))$ and a family $\F$ over this extension that
cannot be compressed to a field extenion $L/k(J^d(X))$ of smaller transcendence degree.
If this family compresses to a family $\F'$ over $L'/k$ then by considering the map to $J^d(X)$
induced by $\det\F'$ we see
that $L'$ contains $k(J^d(X))$. Hence the result.

\end{proof}

\section{Some Linear Algebra}
\label{s:linalg}
We will use the results from this section to find an upper bound on
the essential dimension of the polystable locus. The results of this section
allow us to pass from polystable to stable by adding appropriate parabolic
structure.

Let $V$ be a finite dimensional vector space of dimension $r$. We equip $V$ with 
two full flags 
$$V\,=\,F_1^x\,\supset\, F_2^x\,\supset\, \cdots \,\supset\,F_r^x\,\supset\,
F_{r+1}^x\,=\,0$$
and
$$V\,=\,F_1^y\,\supset \,F_2^y\,\supset \,\cdots\, \supset\,F_r^y\,\supset\, 
F_{r+1}^y\, =\, 0$$
with $\dim F_i^x\, =\, \dim F_i^y \,=\, r-i+1$. We say that the flags are 
\emph{generic} if 
$$
\dim(F_i^x+F_j^y)\,=\,\min(r,\dim(F_i^x)+\dim(F_j^y))\, .
$$
It is clear that generic flags exist. Fix a $1$-dimensional subspace 
$l\subseteq V$ with 
$$
\dim(l+F_i^x + F_j^y)\,=\, \min(1+\dim(F_i^x)+\dim(F_j^y),r)\, .
$$
We say that the triple $(F_*\, , F_*'\, ,l)$ is \emph{generic} if the flags are 
generic and the subspace $l$ satisfies the above condition. It is easy to see 
that generic triples exist and any generic pair of flags can be completed to a 
generic triple.

For a subspace $W\subseteq V$ define the \emph{degree} of $W$ to be
\begin{eqnarray*}
\deg_V(W) &=& \sum_{i=1}^r ((i-1)(\dim(W\cap F^x_i)-\dim(W\cap F^x_{i+1})) \\
 &&    + \sum_{i=1}^r ((i-1)(\dim(W\cap F^y_i)-\dim(W\cap F^y_{i+1})) \\& = & \sum_{i=2}^{r} \dim(W\cap F_i^x) + \sum_{i=2}^{r} \dim(W\cap F^y_i).
\end{eqnarray*}

We also need the notation
$$\deg (W)\, =\, \deg_V (W) + (r-1)\dim (l \cap W)\, .$$
Note that $\deg_V (W)$ only depends on the first two flags and not the
line.

Let us set $d_i^x(W)\,=\,\dim(W\cap F^x_i)$ and $d_i^y(W)\,=\,\dim(W\cap F^y_i)$.

We define the \emph{slope} of $W$ to be $\mu(W)\,=\,\deg (W)/\dim (W)$.

\begin{lemma}\label{l:degbound}
For a generic pair of flags $\{F^x_i\}$ and $\{F^y_i\}$, We have 
$$
(W\cap F^x_{i})\oplus (W\cap F^y_{r+2-i})\,\subseteq\, W
$$
so that $d^x_i(W)+d^y_{r+2-i}(W)\,\le\, \dim(W)$ for all $2\,\le\, i\,\le\, r$.

In the case $l\,\subseteq\, W$, we have 
$$
l\oplus (W\cap F^x_{i})\oplus (W\cap F^y_{r+3-i})\,\subseteq\, W
$$
so that $d^x_i(W)+d^y_{r+3-i}(W)\,\le\, \dim(W)-1$ for all $2\,\le\,
i\,\le\, r+1$. In particular, $d^x_2(W)\,\le\, \dim(W)-1$ and $d^y_2(W)\,
\le \, \dim(W)-1$.
\end{lemma}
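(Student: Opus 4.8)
The plan is to read off both inclusions directly from the genericity hypotheses by an elementary dimension count, the whole point being that genericity forces the relevant dimensions to add up to exactly $r$ (resp.\ $r-1$). First I would record the flag dimensions $\dim F_i^x = \dim F_{r+2-i}^y$ with $\dim F_i^x = r-i+1$ and $\dim F_{r+2-i}^y = i-1$, so that $\dim F_i^x + \dim F_{r+2-i}^y = r$. Genericity of the pair of flags then gives $\dim(F_i^x + F_{r+2-i}^y) = \min(r,r) = r$, so $F_i^x + F_{r+2-i}^y = V$, and inclusion--exclusion forces $F_i^x \cap F_{r+2-i}^y = 0$. Hence $W \cap F_i^x$ and $W \cap F_{r+2-i}^y$ meet only in $0$, their sum is direct, both summands sit inside $W$, and therefore $(W \cap F^x_i)\oplus(W\cap F^y_{r+2-i}) \subseteq W$. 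Taking dimensions yields $d^x_i(W) + d^y_{r+2-i}(W) \le \dim(W)$, valid for $2 \le i \le r$ (the range in which both indices land in $\{2,\dots,r\}$).

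For the second part, with $l \subseteq W$, I would run the same count one step further. Now $\dim F^y_{r+3-i} = i-2$, so $1 + \dim F^x_i + \dim F^y_{r+3-i} = r$, and the genericity condition imposed on the triple $(F_*,F'_*,l)$ gives $\dim(l + F^x_i + F^y_{r+3-i}) = \min(r,r) = r$. Since this value equals the sum of the three dimensions, the sum $l \oplus F^x_i \oplus F^y_{r+3-i} = V$ is in fact direct. Consequently the subspaces $l$, $W \cap F^x_i$, $W \cap F^y_{r+3-i}$ are independent (being subspaces of independent summands), their direct sum lies in $W$ because $l \subseteq W$, and comparing dimensions gives $1 + d^x_i(W) + d^y_{r+3-i}(W) \le \dim(W)$. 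The two ``in particular'' statements are then just the boundary cases $i = 2$, where $F^y_{r+1} = 0$ so the bound collapses to $d^x_2(W) \le \dim(W)-1$, and $i = r+1$, where $F^x_{r+1} = 0$ so it collapses to $d^y_2(W) \le \dim(W)-1$.

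I do not expect a serious obstacle: the entire content is the observation that the genericity of the flags (resp.\ of the triple) is precisely what makes the pairwise intersection vanish (resp.\ the threefold sum direct). The only place demanding a little care is the index bookkeeping---keeping $\dim F^x_i = r-i+1$ straight so that the complementary indices $r+2-i$ and $r+3-i$ come out correctly, and verifying that the stated ranges $2 \le i \le r$ and $2 \le i \le r+1$ are exactly those for which all subspaces in play are defined and the degenerate endpoints reproduce the claimed special cases.
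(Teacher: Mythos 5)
Your proof is correct and is essentially the paper's intended argument: the paper's entire proof of this lemma is the single sentence ``This follows from the definition of a generic pair of flags,'' and your dimension count --- $\dim F^x_i + \dim F^y_{r+2-i} = r$ forcing $F^x_i \cap F^y_{r+2-i} = 0$, and $1 + \dim F^x_i + \dim F^y_{r+3-i} = r$ forcing the threefold sum with $l$ to be direct --- supplies exactly the omitted details. Your index bookkeeping and the boundary cases $i=2$ and $i=r+1$ for the ``in particular'' statements all check out.
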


\begin{proof}
This follows from the definition of a generic pair of flags.
\end{proof}

\begin{proposition}
\label{p:vstable}
We have
$$
\mu(V)\,>\, \mu(W)
$$
for every proper subspace $W$.
\end{proposition}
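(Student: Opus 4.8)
The plan is to prove the strict slope inequality $\mu(V) > \mu(W)$ for every proper subspace $W \subsetneq V$ by a direct computation, using the degree bounds already established in Lemma~\ref{l:degbound}. First I would pin down the slope of the full space $V$. Since $F_i^x$ and $F_i^y$ are full flags with $\dim F_i^x = \dim F_i^y = r-i+1$, the formula for $\deg_V$ gives $\deg_V(V) = \sum_{i=2}^r (r-i+1) + \sum_{i=2}^r (r-i+1) = 2\binom{r}{2} = r(r-1)$, and the line contributes $(r-1)\dim(l\cap V) = r-1$, so $\deg(V) = r(r-1) + (r-1) = (r-1)(r+1) = r^2-1$. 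Dividing by $\dim V = r$ yields $\mu(V) = (r^2-1)/r = r - 1/r$. The whole content of the proposition is then that for every proper $W$ we have $\mu(W) < r - 1/r$, equivalently $\deg(W) \cdot r < (r^2-1)\dim(W)$.

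**The main estimate.** Let $w = \dim(W)$ with $1 \le w \le r-1$. The task is to bound $\deg(W) = \deg_V(W) + (r-1)\dim(l\cap W)$ from above. I would split into the two cases dictated by Lemma~\ref{l:degbound}. When $l \not\subseteq W$, the term $(r-1)\dim(l\cap W)$ vanishes, and I use the telescoped form $\deg_V(W) = \sum_{i=2}^r d_i^x(W) + \sum_{i=2}^r d_i^y(W)$ together with the pairing bound $d_i^x(W) + d_{r+2-i}^y(W) \le w$. Summing this pairing over $i = 2,\dots,r$ matches up the $x$-terms with the $y$-terms and gives $\deg_V(W) \le (r-1)w$. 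When $l \subseteq W$, the second part of Lemma~\ref{l:degbound} gives the sharper pairing $d_i^x(W) + d_{r+3-i}^y(W) \le w-1$, and here $\dim(l\cap W) = 1$ contributes $(r-1)$ to $\deg(W)$; the plan is to show these two contributions still total at most $(r-1)w$, so that in both cases $\deg(W) \le (r-1)w$.

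**Closing the strictness gap.** The bound $\deg(W) \le (r-1)w$ alone only yields $\mu(W) \le r-1 < r - 1/r = \mu(V)$, so in fact the weak bound already suffices for the strict inequality, since $r-1 < r - 1/r$ for all $r \ge 1$. This is the pleasant surprise: the margin $1/r$ between $r-1$ and $\mu(V)$ means I do not need a razor-sharp estimate on $\deg_V(W)$, only the clean bound coming directly from genericity. I would therefore write $\deg(W) \le (r-1)w$ and conclude $\mu(W) \le r-1 < \mu(V)$ in one line once the casework is done.

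**Anticipated obstacle.** The one place requiring care is making the summation over the pairings come out to exactly $(r-1)w$ rather than something weaker, and in particular handling the $l \subseteq W$ case so that the extra $(r-1)$ from $\dim(l\cap W)=1$ is correctly absorbed. The indices in the second part of Lemma~\ref{l:degbound} run $2 \le i \le r+1$ and involve $d^y_{r+3-i}$, so I must track boundary terms ($d_{r+1}^x = d_{r+1}^y = 0$ since those flag steps are zero) to confirm the telescoping is exact. This bookkeeping is the only real work; once it is checked, the strict inequality follows immediately from the comparison $r-1 < r-1/r$.
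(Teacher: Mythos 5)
There is a genuine gap in your second case, and the ``pleasant surprise'' you announce is in fact false: when $l \subseteq W$ the bound $\deg(W) \le (r-1)\dim(W)$ does not hold, so you cannot conclude $\mu(W) \le r-1$ uniformly. Summing the pairing $d_i^x(W) + d^y_{r+3-i}(W) \le w-1$ of Lemma~\ref{l:degbound} over $2 \le i \le r+1$ (where $w = \dim W$; the boundary terms $d^x_{r+1} = d^y_{r+1} = 0$ do telescope correctly, as you suspected) yields $\deg_V(W) \le r(w-1)$, and adding the line's contribution $(r-1)$ gives $\deg(W) \le rw - 1$, which strictly exceeds your target $(r-1)w$ as soon as $w \ge 2$. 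Moreover this is sharp: take $r = 3$ and $W = l + F^x_3$. Genericity forces $d^x_2(W) = d^x_3(W) = d^y_2(W) = 1$ and $d^y_3(W) = 0$ (the triple condition $\dim(l + F^x_3 + F^y_3) = 3$ rules out $F^y_3 \subseteq W$), so $\deg(W) = 3 + (r-1) = 5$ and $\mu(W) = 5/2 > 2 = r-1$. Thus the inequality $\mu(W) \le r - 1$ that your one-line conclusion rests on simply fails for such $W$, even though the proposition itself is true ($5/2 < 8/3 = \mu(V)$).

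The repair is what the paper does: in the case $l \subseteq W$, accept the weaker bound $\deg(W) \le r(w-1) + (r-1) = rw - 1$ and conclude $\mu(W) \le r - \frac{1}{w}$, which is still strictly less than $\mu(V) = r - \frac{1}{r}$ precisely because $W$ is proper, so $w \le r-1 < r$. Note the margin of stability is $\frac{1}{w} - \frac{1}{r}$, not a uniform $\frac{1}{r}$; this is no accident. Without the line $l$, the subspace $W = F^x_r$ already has $\deg_V(W) = r-1$ and slope $r-1$, equal to the slope $\deg_V(V)/r = r-1$ of $V$ computed without the line --- that is, the two flags alone give only semistability, and the entire purpose of the generic line (and of the parabolic point $\bz$ in Section~\ref{s:socle}) is to break these ties. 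A bound as strong as $\deg(W) \le (r-1)w$ for all $W$ would prove strict stability without ever using $l$, which is impossible. Your first case ($W \cap l = 0$) is correct and agrees with the paper's argument; only the $l \subseteq W$ case needs the finer slope comparison.
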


\begin{proof}
Note that $\deg(V)\,=\,2\sum_{i=1}^{r-1}i+(r-1)\,=\,r^2-1$ so that 
$\mu(V)\,=\,r-\frac{1}{r}$.

First consider the case that $W\cap l\,=\,\{0\}$. Then by Lemma~\ref{l:degbound},
we have
\begin{eqnarray*}
\deg(W) &=& \deg_V W \\
		&=& \sum_{i=2}^{r} (d^x_i(W) + d^y_{r+2-i}(W)) \\
		&\le & (r-1) \dim W.
\end{eqnarray*}
So $\mu(W)\,\le\, r-1<\mu(V)$.

Then consider the case in which $l\subseteq W$.
By Lemma~\ref{l:degbound}, we have
\begin{eqnarray*}
\deg(W) &=& \deg_V W +(r-1) \\
		&=& (\sum_{i=3}^r (d_i^x(W) + d^y_{r+3-i}(W))) +
(d^x_{2}(W)+d^y_{2}(W)) +(r-1) \\
		&\le & r(\dim(W)-1)+(r-1).
\end{eqnarray*}
So $\mu(W)\,\le \,r-\frac{1}{\dim(W)}\,<\,\mu(V)$.
\end{proof}

\section{The Socle}\label{s:socle}

\begin{definition}
Let $K$ a field containing $k$.
 We say that a parabolic bundle $\F$ on $X_K$ is \emph{polystable} if 
$\F\otimes_K \overline{K}$ is a direct sum of stable parabolic bundles
of the same parabolic slope.
\end{definition}

\begin{proposition}
Let $\F_*$ be a semistable parabolic bundle on $X_{\overline{K}}$ with 
parabolic slope $\mu$. Then there exists a unique maximal polystable subbundle 
with parabolic slope $\mu$. We call this bundle the socle of $\F_*$ and write 
$\soc(\F_*)$. If $\F_*$ is defined over $K$ then so is $\soc(\F_*)$.
\end{proposition}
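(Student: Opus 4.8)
The plan is to realize $\soc(\F_*)$ as the maximal semisimple subobject in a suitable abelian category, so that existence, uniqueness and Galois descent all follow formally. First I would fix the parabolic slope $\mu$ and, using the embedding of parabolic bundles into $\qcoh(X,D,N)$ from \S~\ref{s:pbundles}, consider the full subcategory $\mathcal{C}_\mu$ whose objects are the semistable parabolic bundles of slope $\mu$ on $X_{\overline{K}}$. The first task is to verify that $\mathcal{C}_\mu$ is abelian: for a morphism $f_*\,:\,\E_*\,\lri\,\G_*$ in $\mathcal{C}_\mu$, the kernel, image and cokernel formed in $\qcoh(X,D,N)$ are again semistable of slope $\mu$. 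This is the parabolic analogue of the classical fact for semistable sheaves and rests on the slope inequalities built into semistability: a subobject of a slope-$\mu$ semistable bundle has $\parmu\,\le\,\mu$ and a quotient has $\parmu\,\ge\,\mu$, which pins the slope of each piece of $f_*$ to $\mu$. In particular $\mathcal{C}_\mu$ has finite length (bounded by the rank) and Schur's lemma holds, so the simple objects are precisely the stable parabolic bundles of slope $\mu$ and the semisimple objects are precisely the polystable ones.

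Granting this, the existence and uniqueness of $\soc(\F_*)$ reduce to the standard construction of the socle in a finite-length abelian category. The essential observation is that a sum of two polystable subobjects is again polystable: the image of $\E_*\oplus\E'_*\,\lri\,\F_*$ is a quotient of a semisimple object, hence semisimple. By finite length the sum of all simple subobjects of $\F_*$ is attained, is polystable of slope $\mu$, and is visibly the unique maximal polystable subobject. I would then check that this categorical subobject is genuinely a parabolic subbundle rather than merely a subsheaf: any subobject $\E_*$ with $\parmu(\E_*)\,=\,\mu$ inside the semistable $\F_*$ is automatically saturated, since passing to the saturation can only raise the parabolic degree while semistability of $\F_*$ prevents it from exceeding $\mu$.

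The final clause on Galois descent follows from uniqueness exactly as in the Harder--Narasimhan argument already used in \S~\ref{s:pbundles}. For $\sigma$ in $\mathrm{Gal}(\overline{K}/K)$ the functor $\sigma^*$ preserves parabolic slope, stability and polystability, so $\sigma^*(\soc(\F_*))$ is again a maximal polystable subbundle of $\sigma^*(\F_*)$. When $\F_*$ is defined over $K$ we have $\sigma^*(\F_*)\,\cong\,\F_*$, whence uniqueness forces $\sigma^*(\soc(\F_*))\,=\,\soc(\F_*)$; the socle is thus Galois-invariant and descends to $K$.

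I expect the genuine work to lie in the first paragraph: establishing that $\mathcal{C}_\mu$ is an abelian subcategory of $\qcoh(X,D,N)$ and that Schur's lemma holds for stable parabolic bundles, i.e. that kernels and cokernels of slope-$\mu$ morphisms remain semistable of slope $\mu$. Once this abelian-category scaffolding is in place, the socle construction and the descent are purely formal.
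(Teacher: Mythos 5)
Your proof is correct, but it takes a genuinely different route from the paper. The paper does not redo the abelian-category work for parabolic bundles: it cites Huybrechts--Lehn for the existence and uniqueness of the socle of a semistable \emph{vector} bundle over an algebraically closed field, and then transfers the statement to the parabolic setting via the parabolic--orbifold correspondence of Theorem \ref{p:equivalence} (after a finite extension, a parabolic bundle corresponds to a $\Gamma$-bundle on a cover $Y$; the socle of the underlying semistable bundle on $Y$ is $\Gamma$-invariant by its uniqueness, hence is itself a $\Gamma$-subbundle and corresponds back to a parabolic subbundle). Your approach instead proves the result intrinsically: you verify that the semistable parabolic bundles of fixed slope $\mu$ form a finite-length abelian subcategory $\mathcal{C}_\mu$ of Yokogawa's $\qcoh(X,D,N)$, identify simples with stables and semisimples with polystables, and take the socle in the categorical sense. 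This is essentially the Huybrechts--Lehn argument carried out directly in the parabolic category, and your care on the two genuine pressure points is apt: working in $\qcoh(X,D,N)$ rather than with naive parabolic morphisms (where kernels and cokernels misbehave), and the saturation argument showing a slope-$\mu$ subobject is a genuine parabolic subbundle (nontrivial saturation strictly raises parabolic degree, contradicting semistability). What each route buys: the paper's transfer is shorter and reuses established machinery, but depends on the equivalence of Theorem \ref{p:equivalence}, its exactness, and the compatibility of (semi)stability under the correspondence, together with passage to a finite extension where the cover $Y$ exists; your argument is self-contained, avoids the orbifold correspondence entirely, and stays over $\overline{K}$ throughout, at the cost of checking the abelian-category scaffolding by hand. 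The Galois-descent step is the same in both proofs: uniqueness forces $\sigma^*(\soc(\F_*))\,=\,\soc(\sigma^*\F_*)\,\cong\,\soc(\F_*)$ for all $\sigma$, and the invariant subsheaf descends to $K$ by standard descent for coherent subsheaves.
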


\begin{proof}
Over algebraically closed fields for bundles without
parabolic structure the proof can be found in \cite{huybrechts:97}.
To add parabolic structure one can use the parabolic orbifold correspondence, see
below Theorem \ref{p:equivalence}. Note that if a semistable vector bundle $\F$ 
has
a group action the uniqueness of the socle implies that the socle is preserved by that
action.

To see the assertion about ground fields notice that $\soc(\F_*)$ will always be 
defined over some finite Galois extension $L/K$ with Galois group $G$. The Galois 
action will descend to the socle, as observed above.
\end{proof}

Consider the functor 
$$
\eF \,=\, \eF^{{\rm poly}\, , r \, ,d }_{X,\bD} \,:\, {\rm Fields}_k 
\,\longrightarrow \, \sets
$$
with 
$$
\eF(K) \,=\, \{ \text{families of polystable bundles on }X_K\text{ of degree 
}d\text{ and rank } r\}/\sim\, ,
$$
where $\sim$ is the equivalence relation given by isomorphism of families.
We will need to say something about the essential dimension of this functor.
The main idea is that we can turn a polystable vector bundle into a stable one
by adding parabolic structure at three points.

Let $\F\,\in\,\eF(K)$. Choose three $k$-points $x,y,z\in X$ that are not 
parabolic points. (Recall that we assumed that they exist in the introduction.)
Choose a trivialization of $\F$ in a Zariski neighborhood of the three points.
Using the trivialization we can identify the fiber over the three points with
a common vector space $V$. Then $V$ is a $r$ dimensional vector space.
We turn these points into parabolic points by defining full flags at $x$ and $y$ 
to be $$V\,=\,F_1^x\,\supset \,F_2^x\,\supset\, \cdots F_r^x\,\supset\, 
F_{r+1}^x\,=\,\{0\}$$
and
$$V\,=\,F_1^y\,\supset\, F_2^y\,\supset\,\cdots\,F_r^y\, \supset\, 
F_{r+1}^y\,=\,\{0\}$$
so that $k_i^x\,=\,k_i^y\,=\,1$ for all $i\,=\,1\, ,\cdots\, ,r$.
Choose the flag at $z$ to be $$V\,=\,F_1^z\,\supset\, F_2^z\,=\,l\,\supset\,
F_3^z\,=\, 0\, ,$$
where $l$ is a line in $V$ so that $k_1^z\,=\,r-1$ and $k_2^z\,=\,1$.
The weights for $x$ and $y$ are chosen to be
$\alpha_i^x\,=\,(i-1)\epsilon$, $i\,=\, 1,\cdots,r$, and the weights for $z$ are 
chosen to be $\alpha_1^z\,=\,0$ and $\alpha_2^z\,=\,(r-1)\epsilon$,
where $\epsilon$ is so chosen that the largest weight is smaller than $1$.
The corresponding parabolic points are denoted $\bx,\by$ and $\bz$.

Let
$$
E \,=\, \flag_\bx{\F} \times_K \flag_\by(\F)\times_K \flag_\bz(\F)\, .
$$
On $E\times X$ there is a universal extension of the quasiparabolic structure of 
$\F$ to the three new
points. This means that there is a parabolic bundle $\E_*$ on $E\times X$ with 
datum $\bD'\,=\,\bD\cup\{\bx\, ,\by\, ,\bz\}$. 
Note by construction, the parabolic slope of the 
new parabolic bundle $\E_*$ is 
$$\text{par-}\mu(\E_*)\,=\, \text{par-}\mu(\F_*)+
\sum_{i=1}^rk_i^x\alpha_i^x+\sum_{i=1}^rk_i^y\alpha_i^y+ 
\sum_{i=1}^2k_i^z\alpha_i^z\,=\,\mu+\mu(V)\epsilon$$
where $\mu(V)$ is defined in Section~\ref{s:linalg} .
Any parabolic subbundle $\E'_*$ of $\E_*$ has parabolic slope
$$\text{par-}\mu(\E'_*)\,=\,\mu +\mu(W)\epsilon\, ,$$
where $W$ is the common fiber of the subbundle $\E_*'$ and hence
is a vector subspace of $V$. Then by Proposition~\ref{p:vstable}, we have that 
$\E_*$ is a stable parabolic bundle.
There is an open subscheme $E^s\subseteq E$ where this bundle is stable. 

\begin{lemma}
The open subscheme $E^s$ is not empty.
\end{lemma}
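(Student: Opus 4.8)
The plan is to produce a single geometric point of $E$ at which $\E_*$ is stable. I would first note that $E$ is a nonempty projective $K$-scheme, being a fibre product over $K$ of the flag varieties $\flag_\bx(\F),\flag_\by(\F),\flag_\bz(\F)$, and that stability is an open condition (\cite{mehta:80}), so $E^s\subseteq E$ is open. Hence it suffices to exhibit one point of $E_{\overline K}$ lying in $E^s$, since a finite-type $K$-scheme is nonempty as soon as it acquires a $\overline K$-point. I therefore base change to $\overline K$, where a point of $E$ is an honest triple $(F_*^x,F_*^y,l)$ of flags in the fibre $V$ and $\F_{\overline K}$ is a direct sum of stable parabolic bundles, all of slope $\mu$.

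Adding quasi-parabolic structure at $x,y,z$ does not alter the underlying bundle, so the saturated subbundles of $\E$—which suffice for testing stability—are exactly those of $\F$. For such a subbundle $\E'$ of rank $w$, with fibres $W_x,W_y,W_z\subseteq V$, the computation preceding the lemma shows that the three new points raise $\pardeg(\E'_*)$ by $\epsilon\big(\sum_{i\ge 2}\dim(W_x\cap F_i^x)+\sum_{i\ge 2}\dim(W_y\cap F_i^y)+(r-1)\dim(W_z\cap l)\big)$. Since $\F_*$ is semistable we have $\parmu_\bD(\E'_*)\le\mu$, and $\parmu(\E_*)=\mu+\mu(V)\epsilon$; so the strict inequality $\parmu(\E'_*)<\parmu(\E_*)$ reduces to showing the bracketed quantity is $<w\,\mu(V)$. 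Were the three fibres equal to a single subspace $W$, this bracket would be exactly $\deg(W)$, and Proposition \ref{p:vstable} asserts $\deg(W)/\dim(W)<\mu(V)$ for \emph{every} proper subspace at once, as soon as the triple of flags is generic; that would finish the proof outright.

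The main obstacle is exactly that the three fibres genuinely differ: the fixed trivialization of $\F$ need not respect the decomposition into stable summands, so for a subbundle such as a single summand the subspace $\E'|_p\subseteq V$ varies with $p$, and Proposition \ref{p:vstable}, which pits one subspace against both flags and the line, cannot be applied verbatim. To get around this I would first dispose of the subbundles with $\parmu_\bD(\E'_*)<\mu$ by shrinking $\epsilon$: the bracket divided by $w$ is bounded by a constant depending only on $r$, while the slope defect $\mu-\parmu_\bD(\E'_*)$ is bounded below by a fixed positive rational, so a sufficiently small $\epsilon$ (still allowed by the construction) keeps these from destabilizing. The remaining threats are the slope-$\mu$ subbundles; by polystability these are the subobjects $\bigoplus_j U_j\otimes\F_j$ in the semisimple category of slope-$\mu$ parabolic bundles, and hence form a bounded family $S$ indexed by the subspaces $U_j$ of the multiplicity spaces. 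I would then choose the flag triple generic relative to $S$ and verify, via a dimension count on the incidence variety of (flag triples, members of $S$), that a single generic triple puts every fibre configuration arising from $S$ into sufficiently general position to force the bracket below $w\,\mu(V)$. When $\F_{\overline K}$ is multiplicity free, $S$ is finite and this is immediate; the delicate point is the presence of multiplicities, where $S$ is positive dimensional and one must confirm that these moving families of fibres still impose only avoidable conditions on the generic triple. Everything else is bookkeeping with the parabolic degree.
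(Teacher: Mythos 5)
Your reductions (passing to $\overline{K}$, openness of $E^s$, disposing of subbundles with $\parmu_{\bD}<\mu$ by shrinking $\epsilon$ — a point the paper actually leaves implicit) are all sound, and you have correctly located the only place where the lemma has content: the fibres of a slope-$\mu$ subbundle at $x$, $y$, $z$ under a fixed trivialization need not be a common subspace of $V$. But at exactly that point your argument stops being a proof. You propose to choose the triple of flags ``generic relative to $S$'' and to verify ``via a dimension count on the incidence variety'' that one generic triple handles every member of the positive-dimensional family $S$ of slope-$\mu$ subobjects — and you never perform that count; indeed you explicitly flag the multiplicity case as something ``one must confirm.'' Since the union over a positive-dimensional family of per-member bad loci can a priori fill the space of triples, this is not a routine verification but the crux, and as written it is a genuine gap.

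The missing idea is that the trivialization is yours to choose, and choosing it well makes the difficulty evaporate. Over $\overline{K}$ write $\F=\bigoplus_i \F_i\otimes k^{m_i}$ with the $\F_i$ pairwise non-isomorphic stable parabolic bundles of slope $\mu$; the slope-$\mu$ subobjects are exactly $\bigoplus_i \F_i\otimes U_i$ with $U_i\subseteq k^{m_i}$ (this is your family $S$). Shrink the open set $U\ni x,y,z$ so that each $\F_i\vert_U$ is trivial and trivialize $\F\vert_U$ summand by summand; then every member of $S$ becomes a \emph{constant} subsheaf of ${\mathcal O}_U^{\oplus r}$, hence has one and the same fibre $W=\bigoplus_i k^{r_i}\otimes U_i$ at all three points. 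Now Proposition \ref{p:vstable} — whose strength is precisely that a single generic triple $(F^x_*,F^y_*,l)$ gives $\mu(W)<\mu(V)$ for \emph{every} proper subspace $W$ simultaneously — finishes the argument uniformly over all of $S$, multiplicities included, with no genericity-relative-to-$S$ and no incidence-variety count. This is exactly what the paper's (terse) proof does: it passes to $\overline{K}$, writes $\F$ as a direct sum of stable bundles of slope $\mu$, finds an open $U$ containing $x,y,z$ on which $\F$ is trivial, and then ``applies the argument above,'' in which the relevant subbundles genuinely have a common fibre $W\subseteq V$.
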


\begin{proof}
It suffices to show that $E^s\otimes_K\overline{K}$ is not empty so we may 
assume 
that
$$
\F \,= \,\F_1\oplus \cdots \oplus \F_l
$$
with the $\F_i$ non isomorphic stable bundles of the same slope $\mu$. We may 
find an open $U$ of $X_K$ which contains $x,y$ and $z$ such that
$\F\vert_U$ is trivial. Then we apply the argument above to obtain the result.
\end{proof}

\begin{theorem}
 \label{t:functorpoly}
We have
\begin{eqnarray*}
\ed(\eF) &\le & (r^2-1)(g-1)+\sum_{\bx\in \bD}\dim \flag_{\bx}(\bD)+g + r^2-1\\
&=&r^2g+ \sum_{\bx\in \bD}\dim \flag_{\bx}(\bD)\, .
\end{eqnarray*}
\end{theorem}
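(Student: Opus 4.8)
The plan is to bound the essential dimension of the functor $\eF$ of polystable bundles by reducing to the stable case already handled in Corollary~\ref{c:stable}. The central device, set up in the discussion preceding the statement, is the map that rigidifies a polystable bundle $\F$ into a \emph{stable} parabolic bundle $\E_*$ by adjoining three new parabolic points $\bx,\by,\bz$ with the explicit generic flag data and weights depending on a small parameter $\epsilon$. Proposition~\ref{p:vstable} guarantees that, over the generic locus $E^s$ of the flag-variety bundle $E$, the resulting parabolic bundle is stable, and the preceding Lemma shows $E^s$ is nonempty. So the first step is to package this construction functorially: given $\F\in\eF(K)$ defined over a field $K$, I would pass to a point of $E^s$ and thereby produce a point of the functor associated to $\bun^{r,d,s}_{X,\bD'}$ over some extension of $K$, where $\bD' = \bD\cup\{\bx,\by,\bz\}$.

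\emph{Second}, I would control the essential dimension cost of this rigidification. The point of $E^s$ lives over $K$ after adjoining the coordinates of a generic triple of flags, i.e.\ after a base extension of transcendence degree at most $\dim E^s = \dim\flag_\bx(\F)+\dim\flag_\by(\F)+\dim\flag_\bz(\F)$. With the chosen multiplicities — full flags at $x$ and $y$, and a single line $l$ at $z$ — these dimensions are $\binom{r}{2}$, $\binom{r}{2}$, and $r-1$ respectively, summing to $r(r-1)+(r-1)=(r-1)(r+1)=r^2-1$. This is precisely the extra term $r^2-1$ appearing in the claimed bound, so the accounting is forced by the flag dimensions of the three auxiliary points.

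\emph{Third}, I would invoke Corollary~\ref{c:stable}(i) applied to the enlarged datum $\bD'$. Since adding generic parabolic structure at three auxiliary points does not change the rank $r$ or the degree $d$, and since the new points are chosen generically (in particular contributing nothing to the gcd defining $l(\bD')$, one should check $l(\bD')$ is unchanged or handle it by noting the bound only uses $l(\bD')-1\le$ the $l(\bD)$-contribution), the corollary bounds $\ed$ of the stable locus for $\bD'$ by
$$
l(\bD')-1 + (r^2-1)(g-1) + \sum_{\bx\in\bD'}\dim\flag_\bx(\bD') + g .
$$
Combining this with the transcendence-degree overhead $r^2-1$ from the second step and separating out the contributions of $\bx,\by,\bz$ (which are exactly the $r^2-1$ already counted) from those of the original datum $\bD$ yields
$$
\ed(\eF)\le (r^2-1)(g-1) + \sum_{\bx\in\bD}\dim\flag_\bx(\bD) + g + (r^2-1),
$$
and the final equality $(r^2-1)(g-1)+g+(r^2-1)=r^2 g$ is a direct simplification, matching the statement.

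\emph{The main obstacle} I expect is making the reduction genuinely functorial over the ground field $k$ rather than merely pointwise over $\overline{K}$. The construction of $\E_*$ depends on a Zariski trivialization of $\F$ near $x,y,z$ and on a choice of generic triple, and I must ensure that the transcendence degree of definition of the original polystable $\F$ is recovered from that of the rigidified stable $\E_*$ — i.e.\ that descending $\E_*$ to a subfield $L'$ forces $\F$ itself to descend there. This is where the three rational points hypothesis and the socle/polystability structure are essential: the flags are added at \emph{$k$-rational} points, so forgetting the auxiliary parabolic structure is defined over $k$ and carries a field of definition of $\E_*$ back to one of $\F$. I would argue that the forgetful map $E^s\to\eF$ exhibits $\F$ as recoverable from $\E_*$ over the same field (up to the bounded flag overhead), so no transcendence degree is lost in the reverse direction and the inequality is sharp in the intended sense.
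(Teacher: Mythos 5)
Your overall strategy is the paper's: rigidify a polystable bundle by adding the three auxiliary parabolic points and then invoke Corollary~\ref{c:stable} for the enlarged datum $\bD'$. But your accounting in Steps 2--3 hides a genuine gap. You pass to an extension $K'/K$ of transcendence degree up to $r^2-1$ in order to find a point of $E^s$, and then apply the corollary over $\bD'$. First, the corollary's bound already contains the term $\sum_{\bw\in\bD'}\dim\flag_{\bw}(\bD')$, which includes the $r(r-1)/2+r(r-1)/2+(r-1)=r^2-1$ coming from $\bx,\by,\bz$; stacking your ``overhead'' $r^2-1$ on top would give $2(r^2-1)$, and your move of declaring the two contributions to be ``exactly the $r^2-1$ already counted'' is an identification by fiat, not an argument --- they are distinct quantities. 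Second, and more seriously, after extending to $K'$ the corollary produces a field of definition $L\subseteq K'$ of the rigidified stable bundle $\E_*$; forgetting the auxiliary structure then shows that $\F_{K'}$ --- not $\F$ --- is defined over $L$, and $L$ need not embed into $K$. Since by the paper's definition the essential dimension of $\F\in\eF(K)$ is computed over \emph{subfields} of $K$, and essential dimension can drop under field extension, this bounds $\ed(\F_{K'})$ rather than $\ed(\F)$. The fix, implicit in the paper's ``we may add parabolic structure at $x,y$ and $z$'': using the trivialization at the $k$-rational points, $E$ is a product of honest flag varieties over $K$, hence a rational variety, and $K$ is infinite (characteristic zero); so the nonempty open $E^s$ has a $K$-rational point. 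The rigidification therefore costs no transcendence degree at all, and $r^2-1$ enters exactly once, as the flag dimensions at the three new points inside $\dim M(X,\bD',r,d)^s$.

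You also have the $l(\bD')$ issue backwards. The new points very much change the gcd: the full flags at $x$ and $y$ have all multiplicities $k^x_i=k^y_i=1$, so $l(\bD')=1$ by \eqref{ld}, whatever $l(\bD)$ was. This is not a technicality to be ``handled'': it is what makes the generic gerbe for $\bD'$ neutral (a universal family exists on the moduli space, which is how Corollary~\ref{c:stable} treats the case $l=1$), and it is why no $l(\bD')-1$ term appears in the theorem's bound. If $l(\bD')$ were merely ``unchanged'' as you suggest, you would pick up an extra $l(\bD)-1$, possibly positive, and overshoot the stated inequality. Your closing observation --- that the forgetful operation discarding the auxiliary parabolic structure is defined over the base, so a field of definition of $\E_*$ yields one of $\F$ (which remains polystable, as polystability is a geometric condition) --- is correct and is indeed the implicit last step of the paper's proof; but it only does the intended job once the rigidification itself has been carried out over $K$, as above.
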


\begin{proof}
As above, we may add parabolic structure at $x,y$ and $z$ to obtain a stable 
parabolic bundle. Since $l(\bD')\,=\,1$ (defined in \eqref{ld} and 
$$\dim \flag_{\bx}(\bD')\,=\,\dim \flag_{\by}(\bD')\,=\,r(r-1)/2\, ,~\, 
\dim \flag_{\bz}(\bD')\,=\,r-1$$
we may apply Corollary~\ref{c:stable} to obtain the result.
\end{proof}

\section{Orbifold Bundles and Orbifold Riemann-Roch}

Let $Y$ be a smooth projective curve with an action of the finite group $\Gamma$ 
defined over $k$.
If $\E$ is a $\Gamma$ bundle on $Y$ the cohomology groups $\coh{*}{Y,\E}$ are 
naturally representations of the group $\Gamma$. We define $\chi(\Gamma,Y,\E)$ to 
be the equivariant Euler
characteristic. Precisely, it is the class 
$$
\chi(\Gamma\, ,Y\, ,\E) \,=\, [\coh{0}{Y,\,\E}] - [\coh{1}{Y,\,\E}]
$$
in the $K$-ring of representations of $\Gamma$. The
orbifold Riemann--Roch theorem is a formula for this class.

\begin{theorem}
\label{t:rr} Suppose that $k=\overline{k}$.
Consider the projection
$$
\pi\,:\,Y\,\lri\, X\,=\,Y/\Gamma\, .
$$
For each $y\in Y$ write $e_y$ for the ramification index of $\pi$ at $y$ and 
$\Gamma_y$ for the isotropy group at $y$. We have a character
$$
\chi_y \,:\, \Gamma_y \,\longrightarrow \,\gm
$$
coming from the action of $\Gamma_y$ on the cotangent space $\m_y/\m_y^2$.

We have
$$
\begin{array}{ccr}
|\Gamma|\chi(\Gamma,Y,\E) & =& (|\Gamma|(1-g)\rk(\E) + \deg(\E))[k[\Gamma]] - \\
&&\sum_{y\in Y}\sum_{d=0}^{{e_y-1}} d[{\rm Ind}^\Gamma_{\Gamma_y}(\E|_y \otimes \chi_y^d)].
\end{array}
$$
\end{theorem}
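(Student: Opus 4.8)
The plan is to prove the equivariant (orbifold) Riemann--Roch formula by reducing it to the ordinary Riemann--Roch theorem on $Y$ together with a careful analysis of the $\Gamma$-module structure contributed by each ramification orbit. The guiding principle is that the only place the formula departs from ``everything is free over $k[\Gamma]$'' is at the ramification points, where the stabilizer acts nontrivially on the local sections; away from ramification the pushforward $\pi_*\E$ is locally free as a $\struct{X}[\Gamma]$-module, which forces the Euler characteristic to be a multiple of the regular representation $[k[\Gamma]]$. The correction terms are exactly the local contributions at the ramified points, and these are governed by the characters $\chi_y$.

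First I would establish the ``generic'' term. Consider the pushforward $\pi_*\E$ as a $\Gamma$-equivariant sheaf on $X$. Over the open locus $U\subseteq X$ over which $\pi$ is étale (equivalently, unramified), $\pi^{-1}(U)\to U$ is a $\Gamma$-torsor and $(\pi_*\E)|_U$ is locally free over $\struct{U}\otimes k[\Gamma]$; hence in the $K$-ring of $\Gamma$-representations, the contribution of the generic part is a multiple of $[k[\Gamma]]$ whose multiplier is computed by ordinary Riemann--Roch on $Y$. Precisely, the naive (non-equivariant) Euler characteristic is $\chi(Y,\E)=\deg(\E)+\rk(\E)(1-g_Y)$, where $g_Y$ is the genus of $Y$; using Riemann--Hurwitz to relate $g_Y$ to $g=g_X$ and the ramification data, this accounts for the coefficient $(|\Gamma|(1-g)\rk(\E)+\deg(\E))$ attached to $[k[\Gamma]]$. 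The key point is that a class in $R(\Gamma)$ that is ``free away from finitely many points'' is pinned down by its total dimension plus local defect terms.

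Next I would compute the local contribution at each ramification point $y\in Y$. Locally near $y$ the isotropy group $\Gamma_y$ is cyclic of order $e_y$, acting on the cotangent space $\m_y/\m_y^2$ through the character $\chi_y$, and acting on the fiber $\E|_y$ through its given $\Gamma_y$-representation. The local sections of $\E$ near $y$, viewed as a $\Gamma_y$-module, decompose according to the order of vanishing $d$ with $0\le d\le e_y-1$, each order-$d$ piece transforming by $\E|_y\otimes\chi_y^d$. Inducing up to $\Gamma$ and summing over the orbit of $y$ produces the term $\sum_{d=0}^{e_y-1} d\,[{\rm Ind}^\Gamma_{\Gamma_y}(\E|_y\otimes\chi_y^d)]$, with the weighting by $d$ arising from the difference between the equivariant Euler characteristic of $\E$ and that of a suitable reference sheaf whose pushforward is genuinely $\Gamma$-free. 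Multiplying through by $|\Gamma|$ clears the denominators from $e_y=|\Gamma_y|$ and the induction, giving the stated identity.

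\textbf{The main obstacle} I expect is the bookkeeping at the ramified points: making precise the claim that the order-$d$ graded pieces of the local $\Gamma_y$-module of sections transform by $\E|_y\otimes\chi_y^d$, and that assembling these over a $\Gamma$-orbit yields exactly the induced representation with the coefficient $d$. This is the step where the character $\chi_y$ on $\m_y/\m_y^2$ enters, and it requires a clean comparison between the equivariant $K$-theory class of $\E$ and that of a $\Gamma$-free ``model'' sheaf (for instance $\pi^*$ of something on $X$), so that the difference is supported at ramification and is computed purely from the local representation-theoretic data. The hypothesis $k=\overline{k}$ is used here to guarantee that $\Gamma_y$ is cyclic and that its characters are defined over $k$, so the local decomposition into isotypic pieces is available; the rest is a localization argument in equivariant $K$-theory, reducing the global statement to the sum of these local defects plus the $[k[\Gamma]]$-multiple coming from the étale locus.
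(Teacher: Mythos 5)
Before assessing the details, note that the paper does not actually prove Theorem \ref{t:rr}: its ``proof'' is a citation to K\"ock \cite{kock:05}, with the remark that the formula can also be deduced from To\"en's Riemann--Roch theorem \cite{toen:99} applied to $[Y/\Gamma]\lri B\Gamma$. So your attempt has to stand on its own, and it has a genuine gap at exactly the place you flag as the main obstacle. The ``key point'' you invoke --- that a class in $R(\Gamma)$ which is free away from finitely many points is pinned down by its total dimension plus local defect terms at the ramification --- is not a valid principle, and taken literally it would prove something false. Indeed, in the tame (characteristic zero) setting $\pi_*\E$ is locally free of rank $\rk(\E)$ over $\struct{X}[\Gamma]$ at \emph{every} point of $X$, including the branch points: completing at $x$ with $y\mapsto x$, $e=e_y$, one has $\widehat{(\pi_*\E)}_x\cong\mathrm{Ind}_{\Gamma_y}^{\Gamma}\bigl(\E|_y\otimes k[[t]]\bigr)$, and $k[[t]]$ is free of rank one over $k[[t^e]][\Gamma_y]$ (the element $v=1+t+\cdots+t^{e-1}$ is a generator, by a Vandermonde argument), whence $\E|_y\otimes k[[t]]\cong k[[t^e]][\Gamma_y]^{\rk\E}$ and the induced module is free. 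So there is no \emph{local} sheaf-theoretic defect at the ramification points at all; if your mechanism were sound, $|\Gamma|\chi(\Gamma,Y,\E)$ would simply be a multiple of $[k[\Gamma]]$, contradicting the correction terms in the statement (and the classical Chevalley--Weil formula). The deviation is a global phenomenon: writing $\struct{X}[\Gamma]\cong\prod_i M_{n_i}(\struct{X})$, the class $\chi(\Gamma,X,\pi_*\E)$ is $\sum_i\chi(X,E_i)[V_i]$ for the Morita-block bundles $E_i$, and the rank and total degree of $\pi_*\E$ do not determine the individual $\deg E_i$. Relatedly, the ``suitable reference sheaf whose pushforward is genuinely $\Gamma$-free'' is never constructed, and producing one with a controlled equivariant comparison to $\E$ is essentially equivalent to the theorem: even the base case $\E=\struct{Y}$, i.e.\ the class of $\pi_*\struct{Y}$ in $K_\Gamma(X)$, is precisely the statement being proved, so your reduction has no anchor.

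What is correct and salvageable in your sketch is the local weight computation (since $\Gamma_y$ is cyclic and linearly reductive, $\widehat{\E}_y\cong\E|_y\otimes_k k[[t]]$ equivariantly and the $t^d$-graded piece transforms by $\E|_y\otimes\chi_y^d$) and the observation that equivariant torsion sheaves \emph{on $Y$} have classes of the form $\sum\mathrm{Ind}_{\Gamma_y}^{\Gamma}(\,\cdot\,\otimes\chi_y^d)$, so d\'evissage through equivariant modifications on $Y$ produces exactly the right shape of correction term. To close the gap you would need to do one of the following: (a) track the degrees of the isotypic subbundles $E_i$ of $\pi_*\E$ using the local eigenvalue data at the branch points and apply ordinary Riemann--Roch blockwise on $X$ --- in essence K\"ock's computation in \cite{kock:05}; or (b) prove the character identity at each $\gamma\neq 1$ by the holomorphic Lefschetz fixed-point formula and at $\gamma=1$ by ordinary Riemann--Roch together with Riemann--Hurwitz, then invert the character relations --- this is the localization route you allude to in your final sentence but do not carry out, and it is the structure of the To\"en-style proof the paper cites. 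As it stands, your argument verifies the identity only at the level of dimensions (i.e.\ after applying $\dim$ to both sides), which is consistent but far from determining the class in $R(\Gamma)$.
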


\begin{proof}
See \cite{kock:05}. This formula can also be deduced from \cite[Theorem 
4.11]{toen:99} by considering the
morphism of quotient stacks
$$
[Y/\Gamma] \,\longrightarrow \, B\Gamma\, .
$$
\end{proof}

We now recall the main result of \cite{biswas:97} in the case of curves.
Consider a reduced divisor $D\,=\,\sum_{i=1}^s x_i$ on $X$, where $x_i$ are 
$k$-rational points.

Recall that the positive integer $N$ was chosen in Section 3 so that the 
parabolic weights were integer multiples of $1/N$.
Consider a curve  $Y$ with an action of a finite group $\Gamma$ such that 
$Y/\Gamma \,=\,X$. There is a projection $\pi\,:\,Y\,\lri\, X$. We further assume 
that for each $x\,\in\, {\rm supp}(D)$, we have
$\pi^*(x) \,=\, kN(\pi^*x)_{\rm red}$ for some positive integer $k$.
Denote by ${\rm Vect}_\Gamma^D(Y,N)$
the full subcategory of $\Gamma$-bundles on $Y$ with $W\in {\rm Vect}_\Gamma^D(Y)$
if and only if 
\begin{itemize}
\item
for all geometric points $y$ in $Y$ with $y\,\in\, {\rm supp}((\pi^*D)_{\rm 
red})$, and for each $\gamma\,\in\, \Gamma_y$ in the isotropy
group, $\gamma^N$ acts on the  fiber $W_y$  trivially, and

\item
for all geometric points $y$ in $Y$ with $y\notin {\rm supp}((\pi^*D)_{\rm 
red})$, the action of the isotropy group $\Gamma_y$ 
on the fiber $W_y$ is trivial.
\end{itemize}

\textbf{Note : } We have not asserted that such a $Y$ exists over our base field $k$.
If such a curve $Y$ with $\Gamma$ action exists then we will say that it 
\emph{splits} the parabolic structure on $X$.

Recall that ${\rm PVect}(X,D,N)$  is the category of parabolic bundles with parabolic
datum only inside the support of D and parabolic weights integer multiples of $1/N$.
The category ${\rm PVect}(X,D,N)$ is a tensor category. To define the tensor 
product, it is convenient to think
of parabolic bundles as being an appropriate family $\{ \F_t\}_{t\in \RR}$, as 
described in Construction \ref{c:rfiltration}. Then $(\F\otimes\F')_t$
is the subsheaf of $i_*i^*\F\otimes\F'$ generated by
$$
\F_a \otimes \F'_b\, , \quad a+b\ge t\, .
$$
Here we denote the inclusion $X\setminus D\hookrightarrow X$ by $i$. One checks 
that the resulting collection
$(\F_* \otimes \F'_*)_{t\in \RR}$ gives a bundle with parabolic datum $\bD$.

With these definitions ${\rm PVect}(X,D,N)$ becomes a  tensor category.
The unit $U$ for the tensor product is the parabolic bundle with 
$U_0\,=\,\struct{X}$ and $U_t\,=\,\struct{X}(-D)$ for $0<t<1$. It is readily
checked that we have an associative, commutative tensor structure with unit. 

\begin{theorem}
\label{p:equivalence}
There is a $k$-linear additive equivalence of tensor categories between ${\rm 
PVect}(X,D,N)$ and ${\rm Vect}_\Gamma^D(Y,N)$. 
\end{theorem}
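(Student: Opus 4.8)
The plan is to exhibit a quasi-inverse pair of functors and then to check that both are $k$-linear, additive and monoidal. In one direction I would send a $\Gamma$-bundle $W \in {\rm Vect}_\Gamma^D(Y,N)$ to the $\RR$-filtered sheaf $\Phi(W)$ defined by the $\Gamma$-invariant direct image
\[
\Phi(W)_t \;=\; \bigl(\pi_*(W \otimes \struct{Y}(-D_t))\bigr)^{\Gamma}, \qquad D_t \,=\, \sum_{y\in R}\lceil t\,e_y\rceil\, y, \quad R \,=\, (\pi^*D)_{\rm red},
\]
with the restriction maps $\Phi(W)_t \to \Phi(W)_s$ for $s\le t$ induced by $\struct{Y}(-D_t)\hookrightarrow\struct{Y}(-D_s)$. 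Since $\pi$ is finite and $W$ is locally free, each $\Phi(W)_t$ is coherent and locally free away from $D$, and the periodicity $\Phi(W)_{t+1}=\Phi(W)_t(-D)$ follows from $D_{t+1}=D_t+\pi^*D$ together with the projection formula. Condition (ii) in the definition of $\qcoh(X,D,N)$ — that the jumps occur only at multiples of $1/N$ — is precisely the translation of the hypothesis that $\gamma^N$ acts trivially on each fiber $W_y$ over $R$: writing $\Gamma_y\cong\mu_{e_y}$ and decomposing $W_y=\bigoplus_a W_y^{(a)}$ into eigenspaces on which the generator acts by $\zeta_{e_y}^{\,a}$, triviality of $\gamma^N$ (and $N\mid e_y$) forces every occurring $a$ to be a multiple of $e_y/N$, so the associated weight $a/e_y$ lies in $\tfrac1N\Z$ and, setting $\zeta_N=\zeta_{e_y}^{e_y/N}$, the weight-$j/N$ piece is the $\zeta_N^{\,j}$-eigenspace. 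Thus $\Phi$ lands in ${\rm PVect}(X,D,N)\subseteq\qcoh(X,D,N)$.

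In the other direction, presenting a parabolic bundle as its filtered sheaf $\F_*$, I would define $\Psi(\F_*)$ as the $\Gamma$-equivariant modification of $\pi^*\F_0$ along $R$ whose isotropy action at each $y\in R$ is prescribed by the fiber filtration, placing the weight-$j/N$ graded piece in the $\zeta_N^{\,j}$-eigenspace. Both functors are local over $X$, so to see that $\Psi$ lands in ${\rm Vect}_\Gamma^D(Y,N)$ and that $\Phi,\Psi$ are mutually quasi-inverse it suffices to work in the formal (or étale) neighborhood of a point of $D$. Away from $D$ the $\Gamma$-action on the relevant fibers is trivial, so by Galois descent $\Phi$ and $\Psi$ restrict to the standard equivalence between $\Gamma$-bundles with trivial inertia and bundles on $X\setminus D$; over $D$ the comparison reduces to the representation theory of the cyclic group $\Gamma_y$, where the eigenspace decomposition above shows both composites are canonically isomorphic to the respective identities. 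That $\Phi$ and $\Psi$ are $k$-linear and additive is immediate, since each is assembled from $\pi_*$, $(-)^\Gamma$ and $\pi^*$.

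It remains to match the tensor structures, and this is the step I expect to be the main obstacle. The tensor product on ${\rm PVect}(X,D,N)$ is not the naive one: $(\F\otimes\F')_t$ is the subsheaf of $i_*i^*(\F\otimes\F')$ \emph{generated} by the images of $\F_a\otimes\F'_b$ with $a+b\ge t$, whereas on $Y$ one simply forms the equivariant tensor product $W\otimes W'$. The desired isomorphism $\Phi(W\otimes W')\cong\Phi(W)\otimes\Phi(W')$ thus amounts to showing that the invariant direct image carries the honest tensor product on $Y$ to this generated subsheaf. I would prove this locally at a point of $D$ using the projection formula and the fact that the eigenvalue of the generator on a product of eigenvectors multiplies, $\zeta_N^{\,a}\cdot\zeta_N^{\,b}=\zeta_N^{\,a+b}$, so that addition of weights matches multiplication of eigencharacters and the condition $a+b\ge t$ matches exactly the saturation recovering the locally free sheaf $W\otimes W'$. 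The genuine content is that the subsheaf generated by the products $\F_a\otimes\F'_b$ is already saturated with the correct fiber filtration, requiring no reflexive-hull correction; this is checked from the explicit local normal form of a $\Gamma_y$-representation together with a dimension count. Finally, the unit $U$ (with $U_0=\struct{X}$ and $U_t=\struct{X}(-D)$ for $0<t\le1$) corresponds to the trivial $\Gamma$-bundle $\struct{Y}$, which completes the identification of $\Phi$ as a monoidal equivalence.
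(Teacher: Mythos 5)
Your proposal is correct in outline, but note that the paper itself does not prove this theorem: its ``proof'' is a citation to \cite[page 344]{balaji:01}, \cite{biswas:97} and \cite{borne:07}, plus (later, in the proposition on exactness) the formula for one of the two functors, $M(W)_t=(\pi_*(W\otimes \struct{Y}(\sum_{i}\lceil -tn_i\rceil y_i)))^\Gamma$. Your $\Phi$ is exactly this functor up to a boundary convention: you twist by $-\lceil te_y\rceil=\lfloor -te_y\rfloor$ where the paper uses $\lceil -tn_i\rceil=-\lfloor tn_i\rfloor$, so the two filtrations differ only in whether a jump is attained at its left or right endpoint; if anything, your convention is the one compatible with the left-continuity imposed by condition (ii) in the definition of $\qcoh(X,D,N)$ and with Construction \ref{c:rfiltration}, so nothing is lost. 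What your sketch supplies beyond the paper is precisely the content of the cited references, and you correctly isolate the two genuinely hard points: the global construction of the quasi-inverse $\Psi(\F_*)$ as a locally free $\Gamma$-sheaf (your phrase ``equivariant modification of $\pi^*\F_0$ prescribed by the fiber filtration'' is where \cite{biswas:97} invests its real effort; on a smooth curve with tame cyclic isotropy your formal-local verification does suffice, since both functors are defined globally and sheaf-theoretically and can be checked after \'etale or formal base change), and the saturation of the generated subsheaf in the tensor comparison, which is exactly the eigenweight-additivity argument you outline. Two small cautions for a complete write-up: the eigenspace decompositions at points over $D$ require roots of unity, and the points $y$ above a $k$-rational $x\in D$ need not be $k$-rational, so over the non-closed base field $k$ of the paper you should verify the unit and counit isomorphisms after base change to $\overline{k}$ and then descend --- your appeal to Galois descent as written covers only the locus away from $D$. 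In sum: the paper's citation keeps its exposition short; your route makes the statement self-contained at the cost of the local-freeness and saturation verifications, which are real but routine in this one-dimensional, characteristic-zero setting.
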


\begin{proof}
This can be found in \cite[page 344]{balaji:01}, \cite{biswas:97} and \cite{borne:07}. Also see 
below for a description of one of the functors in this equivalence.
\end{proof}

We will denote the $\Gamma$-bundle associated to a parabolic bundle $\F_*$ by $\F_*^Y$.

There is a usual notion of exact sequence in the category ${\rm Vect}_\Gamma^D(Y,N)$.
There is also a notion of exact sequence in the category ${\rm PVect}(X,D,N)$ inherited from
the category $\qcoh(X,D,N)$.

\begin{proposition}
The equivalence in Theorem \ref{p:equivalence} preserves exact sequences.
\end{proposition}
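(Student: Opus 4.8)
The plan is to realise the equivalence of Theorem~\ref{p:equivalence} as the restriction of an exact equivalence between the two ambient abelian categories, and then to read off exactness from the fact that the comparison functor is built out of manifestly exact operations. Recall first that $\qcoh(X,D,N)$ is abelian, being a full abelian subcategory of the category of $\RR$-filtered sheaves, and that by construction a sequence in ${\rm PVect}(X,D,N)$ is declared exact precisely when it is exact in $\qcoh(X,D,N)$. On the orbifold side, let $\qcoh_\Gamma(Y)$ denote the abelian category of $\Gamma$-equivariant quasicoherent sheaves on $Y$; the category ${\rm Vect}_\Gamma^D(Y,N)$ is a full subcategory of locally free objects, and its exact sequences are exactly those which are exact in $\qcoh_\Gamma(Y)$. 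Thus it suffices to produce an exact equivalence $\qcoh_\Gamma(Y)\isoarrow\qcoh(X,D,N)$ which restricts on bundles to the equivalence of Theorem~\ref{p:equivalence}.

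First I would write down the functor $G\,:\,\qcoh_\Gamma(Y)\,\lri\,\qcoh(X,D,N)$ in the filtered picture of Construction~\ref{c:rfiltration}. For a $\Gamma$-sheaf $W$ it is given, for $t\in\RR$, by
$$
G(W)_t \,=\, \bigl(\pi_* (W \otimes L_t)\bigr)^\Gamma,
$$
where $L_t$ is a line bundle on $Y$ depending on $t$ and $N$ (built from the reduced ramification divisor), the precise normalisation being immaterial for the exactness argument. The point is that $G$ is a composite of three operations, each of which is exact: tensoring the $\Gamma$-sheaf $W$ with a fixed line bundle is exact; the pushforward $\pi_*$ is exact because $\pi\,:\,Y\,\lri\, X$ is a finite, hence affine, morphism; and the functor $(-)^\Gamma$ of $\Gamma$-invariants is exact because we are in characteristic zero, so that $|\Gamma|$ is invertible and the Reynolds operator $\frac{1}{|\Gamma|}\sum_{\gamma\in\Gamma}\gamma$ exhibits $(-)^\Gamma$ as a direct summand of the forgetful functor. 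Consequently $G$ carries a short exact sequence of $\Gamma$-sheaves to a short exact sequence of $\RR$-filtered sheaves, i.e. $G$ is exact.

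It then remains to match $G$ with the equivalence of Theorem~\ref{p:equivalence} and to transport exactness to the quasi-inverse. By the references cited there, $G$ restricts on locally free objects to a functor naturally isomorphic to the equivalence $W\mapsto \F_*$, so in particular $G$ restricts to an equivalence of the bundle subcategories. Since $G$ is an exact equivalence of abelian categories, its quasi-inverse $F$ is automatically exact as well: kernels and cokernels are defined by universal properties and are therefore both preserved and reflected by any equivalence, so a sequence is exact in $\qcoh(X,D,N)$ if and only if its image under $F$ is exact in $\qcoh_\Gamma(Y)$. Restricting $F$ and $G$ to locally free objects shows that a short exact sequence in ${\rm PVect}(X,D,N)$ corresponds to a short exact sequence in ${\rm Vect}_\Gamma^D(Y,N)$ and conversely, which is the assertion.

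The one genuine point to pin down, and the step I expect to be the main obstacle, is the compatibility claim that the bundle-level equivalence of Theorem~\ref{p:equivalence} really is the restriction of an equivalence of the ambient abelian categories implemented by the invariants functor $G$ above, so that the two a priori different notions of exactness (inherited from $\qcoh(X,D,N)$ on one side, the usual one on $\qcoh_\Gamma(Y)$ on the other) are correctly aligned. This is part of the Biswas--Borne formalism underlying Theorem~\ref{p:equivalence}, but extracting the correct normalisation of the twist $L_t$ and verifying that $G(W)_*$ actually lands in $\qcoh(X,D,N)$ (that is, satisfies conditions (i) and (ii) defining that category) requires some care; once this is in hand, exactness of $G$ from its three exact building blocks finishes the proof.
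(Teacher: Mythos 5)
Your proposal is correct and follows essentially the same route as the paper: the paper writes down the very same functor in the filtered picture, $M(W)_t = \bigl(\pi_*(W\otimes \struct{Y}(\sum_i \lceil -tn_i\rceil y_i))\bigr)^\Gamma$ (your $G$, with the twist $L_t$ made explicit, citing \cite{biswas:97} for the formula), and concludes by the general fact that an additive equivalence of abelian categories preserves exact sequences. Your only deviation is to verify exactness of $G$ directly from its three exact constituents (flat twist, affine pushforward, invariants via the Reynolds operator), which is a harmless elaboration of the paper's one-line categorical remark.
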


\begin{proof}
We use the notation set up before Theorem \ref{p:equivalence}. 
Write $D\,=\,\sum_{i=1}^s x_i$ and $y_i\,= \,\pi^*(x_i)_{\rm red}$.
Set $\pi^*(x_i) \,=\, n_i y_i$.

It will be convenient to think of parabolic bundles in terms of Construction 
\ref{c:rfiltration}. From \cite{biswas:97}, the functor $M\,:\,
{\rm Vect}_\Gamma^D(Y,N)\,\lri\,{\rm PVect}(X,D,N)$ is given by the formula 
$M(W)\,=\,\E_t$, where
$$
\E_t \,=\, (\pi_*(W\otimes \struct{Y}(\sum_i^s\lceil -tn_i \rceil 
y_i))^\Gamma\, .
$$
The functor is clearly additive. It suffices to remark that an equivalence of 
abelian categories by additive functors must preserve exact sequences.
\end{proof}

\section{A Universal Construction}
 
Let $\E_*$ and $\E'_*$ be parabolic bundles with parabolic datum
$\bD$ and $\bD'$. We choose an integer $N$ so that all the weights
of both of the bundles are integer multiples of $\frac{1}{N}$. We denote by 
${\rm Ext}_{\rm par}(\E'_*, \, \E_*)$ 
the Yoneda Ext group in the category $\qcoh(X,D,N)$, where $D$ is chosen 
large enough to contain all parabolic points. Note that all such extensions will create
lie inside the category ${\rm PVect}(X,D,N)$ as $E_*$ and $E'_*$ have underlying vector bundles.
 It is a $k$-vector space and we 
view it as a variety. We would like to construct a universal extension on it. (A 
quick check shows that exact sequences are preserved by Baer sum and scalar 
multiplication.)

After some finite base extension $L/k$, there exists a group $\Gamma$, a 
smooth curve $Y$, and an action of  $\Gamma$ on $Y$ defined over $L$ such that 
$Y/\Gamma \,=\, X$ and $Y$ splits the
parabolic structures of $\E_*$ and $\E'_*$. Therefore, there is an equivalence 
of categories  between ${\rm PVect}(X,D,N)$ and
${\rm Vect}_\Gamma^D(Y,N)$. By further extension of $L$ we may assume that all 
representations of $\Gamma$ are defined over $L$.

\begin{proposition}
Let $\F$ and $\G$ be $\Gamma$ bundles on $Y$. There exists an $L$-vector space
$ {\rm Ext}^1_\Gamma(\F,\,\G)$, which we view as an $L$-variety, and an extension 
of $\Gamma$ sheaves on $ {\rm Ext}^1_\Gamma(\F,\,\G)\times Y$,
$$
0\,\lri\, \pi^*\G \,\lri\, \E \,\lri\, \pi^*\F \,\lri\, 0 \qquad (E)
$$
with the following universal property:
Given a scheme $f:V \lri \spec L$ and an extension
$$ 
0\,\lri\, f^*\G \,\lri \,\E' \,\lri\, f^*\F\,\lri\, 0 \qquad (E')
$$
of $\Gamma$ sheaves on $V\times Y$, there exists a unique $L$-morphism
$t\,:\,V\,\lri\, {\rm Ext}^1_\Gamma(\F,\G)$ with $t^*(E)\,\cong\, (E')$.
\end{proposition}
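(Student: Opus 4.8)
The plan is to realize $\ext{1}{\Gamma}{\F,\G}$ as an equivariant Ext group, build the universal extension as the class of the identity endomorphism, and verify the universal property first over affine bases and then globalize by gluing. First I would reduce the computation of extensions to sheaf cohomology. Since $\F$ is locally free, for any $f : V \to \Spec L$ the local-to-global Ext spectral sequence degenerates: the sheaves $\mathcal Ext^{q}(f^*\F, f^*\G)$ vanish for $q>0$, and $\mathcal Hom(f^*\F,f^*\G) = p_Y^* \mathcal Hom(\F,\G)$, where $p_Y: V\times Y \to Y$ is the projection. Writing $\mathcal H = \mathcal Hom(\F,\G)$, a $\Gamma$-equivariant bundle on $Y$, this gives a natural identification of the set of $\Gamma$-equivariant extensions of $f^*\F$ by $f^*\G$ with $\coh{1}{V\times Y, p_Y^*\mathcal H}^\Gamma$. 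Because $\mathrm{char}\,k = 0$ and $\Gamma$ is finite, taking $\Gamma$-invariants is exact, so this invariant cohomology behaves like ordinary cohomology throughout.

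Then I define $E := \ext{1}{\Gamma}{\F,\G} = \coh{1}{Y,\mathcal H}^\Gamma$, a finite-dimensional $L$-vector space which I regard as the affine space $\mathbb A(E) = \Spec \mathrm{Sym}(E^\vee)$. To produce the universal class I compute the Ext group on $E \times Y$. By the K\"unneth/Leray decomposition together with $\coh{1}{E,\struct{E}} = 0$ (as $E$ is an affine space), the mixed term dies and I obtain a natural isomorphism $\ext{1}{\Gamma}{p_Y^*\F, p_Y^*\G} \cong \coh{0}{E,\struct{E}}\otimes_L E = \mathrm{Sym}(E^\vee)\otimes_L E$. The element $\mathrm{id}_E \in E^\vee \otimes_L E$, viewed inside this group via $E^\vee \subset \mathrm{Sym}(E^\vee)$, represents an actual short exact sequence $0 \to p_Y^*\G \to \E \to p_Y^*\F \to 0$ of $\Gamma$-sheaves; this is the \emph{universal extension} $(E)$.

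The universal property I would establish first for affine $V$ and then glue. When $V$ is affine, higher cohomology of quasicoherent sheaves on $V$ vanishes, so the Leray spectral sequence for $V\times Y \to V$ collapses to give $\coh{1}{V\times Y, p_Y^*\mathcal H}^\Gamma \cong \coh{0}{V,\struct{V}}\otimes_L E$. The right-hand side is exactly the set of $L$-morphisms $V \to \mathbb A(E)$, and unwinding the identification shows that the extension pulled back from $(E)$ along $t\times \mathrm{id}_Y$ has class equal to the image of $t$; hence every extension $(E')$ over affine $V$ equals $t^*(E)$ for a unique $t$, giving both existence and uniqueness. For general $V$ I would choose an affine open cover, obtain on each piece the morphism $t$ and the isomorphism $t^*(E)\cong (E')$, and use the uniqueness just proved to conclude that these agree on overlaps, so that they glue to a global $t: V \to E$ with $t^*(E)\cong (E')$.

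The main obstacle is the extra term $\coh{1}{V,\struct{V}}\otimes_L \hom{\Gamma}{\F,\G}$ that appears in the K\"unneth decomposition when $V$ is not affine: this is precisely what prevents a naive global statement and forces the two-step affine-then-glue argument. Keeping the entire construction $\Gamma$-equivariant is the other point requiring care, but since invariants is an exact functor in characteristic zero and all representations of $\Gamma$ are defined over $L$, each cohomological step above passes to $\Gamma$-invariants without change.
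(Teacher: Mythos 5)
Your construction of the space $\Ext^1_\Gamma(\F,\G)=\coh{1}{Y,{\mathcal Hom}(\F,\G)}^{\Gamma}$, your realization of the universal class as the identity element of $E^{\vee}\otimes_L E$, and your verification of the universal property over an \emph{affine} base are all correct; up to the level of detail, this is the same argument the paper compresses into ``base change for cohomology'' followed by restriction to the $\Gamma$-trivial part $\Ext^1(\F,\G)_{\triv}$ (with exactness of $\Gamma$-invariants in characteristic zero used silently in the same way).

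The genuine gap is the final gluing step. Uniqueness does glue the morphisms $t_i$ into a global $t\,:\,V\lri \Ext^1_\Gamma(\F,\G)$, but it does \emph{not} glue the isomorphisms $t_i^*(E)\cong (E')|_{U_i}$: two isomorphisms of extensions over $U_{ij}\times Y$ inducing the identity on $f^*\G$ and $f^*\F$ differ by a unipotent automorphism ${\rm id}+\iota\, h\, \pi$ with $h\in \coh{0}{U_{ij},\struct{V}}\otimes_L\hom{\Gamma}{\F,\G}$, and these discrepancies form a \v{C}ech cocycle whose class in $\coh{1}{V,\struct{V}}\otimes_L \hom{\Gamma}{\F,\G}$ obstructs gluing --- exactly the K\"unneth summand you yourself flagged. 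The failure is real, not a defect of presentation: if $\hom{\Gamma}{\F,\G}\ne 0$ (e.g.\ $\G=\F$) and $\coh{1}{V,\struct{V}}\ne 0$, a nonzero class $\beta$ in that summand is an extension $(E')$ that is Zariski-locally split over $V$; your recipe then yields $t_i=0$ on each affine piece, hence $t=0$ after gluing, yet $t^*(E)$ is the split extension and is not isomorphic to $(E')$, since any isomorphism of short exact sequences only replaces $\beta$ by $u\beta v^{-1}\neq 0$. So the universal property, read literally for all schemes $V$, cannot be rescued by gluing; it holds only when $\coh{1}{V,\struct{V}}\otimes_L\hom{\Gamma}{\F,\G}$ vanishes. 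Fortunately that is all the paper ever uses: in Proposition \ref{p:construction} and the subsequent compression arguments the test objects $V$ are affine spaces or spectra of fields, so your affine case is the substantive content. You should either restrict the stated universal property to such $V$ (or to $V$ with $\coh{1}{V,\struct{V}}=0$), or impose $\hom{\Gamma}{\F,\G}=0$, rather than assert the general gluing.
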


\begin{proof}
There exists a universal extension on ${\rm Ext}^1(\F,\G)$. This follows via
base change for cohomology. To obtain a universal $\Gamma$-extension, just restrict
this extension to 
$$
{\rm Ext}^1_\Gamma(\F,\G) \,\stackrel{{\rm defn}}{=}\, {\rm Ext}^1(\F,\G)_{\rm 
triv}\, .
$$
This proves the proposition.
\end{proof}

\begin{proposition}
\label{p:construction}
There exists a universal extension of parabolic bundles on 
$$
{\rm Ext}_{\rm par}(\E'_*,\, \E_*)\, .
$$
\end{proposition}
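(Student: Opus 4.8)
The plan is to transport the universal extension problem from the parabolic category $\mathrm{PVect}(X,D,N)$ to the orbifold category $\mathrm{Vect}_\Gamma^D(Y,N)$ via the equivalence of Theorem~\ref{p:equivalence}, solve it there using the preceding proposition, and then transport the solution back. The key fact making this legitimate is that the equivalence is $k$-linear, additive, and (by the proposition just proved) preserves exact sequences. Consequently it induces an isomorphism on Yoneda $\mathrm{Ext}^1$ groups, so that after the finite base extension $L/k$ fixed above we have a canonical identification
\begin{equation}\label{e:extiso}
{\rm Ext}_{\rm par}(\E'_*,\, \E_*)\otimes_k L \,\cong\, {\rm Ext}^1_\Gamma({\E'_*}^Y,\, \E_*^Y)\, ,
\end{equation}
where ${\E'_*}^Y$ and $\E_*^Y$ denote the $\Gamma$-bundles on $Y$ corresponding to $\E'_*$ and $\E_*$.

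First I would base change to $L$ and invoke the previous proposition with $\F={\E'_*}^Y$ and $\G=\E_*^Y$ to obtain a universal $\Gamma$-extension $(E)$ of $\Gamma$-sheaves on ${\rm Ext}^1_\Gamma({\E'_*}^Y,\,\E_*^Y)\times Y$. Next I would apply the functor of the equivalence that sends $\Gamma$-bundles on $Y$ to parabolic bundles on $X$ (the functor $M$ described before the last proposition, extended pointwise to the family over ${\rm Ext}_{\rm par}(\E'_*,\,\E_*)\times X$). Because this functor is exact, applying it to $(E)$ yields a short exact sequence of parabolic bundles over ${\rm Ext}_{\rm par}(\E'_*,\,\E_*)_L\times X$ whose fibers realize the corresponding parabolic extension classes; via \eqref{e:extiso} this is precisely a universal extension over the base $L$-variety. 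The universal property is inherited: given any test scheme $V$ and any parabolic extension $(E')$ over $V\times X$, pass through the equivalence to a $\Gamma$-extension on $V\times Y$, apply the universal property of $(E)$ to get a unique classifying map $t\colon V\to {\rm Ext}^1_\Gamma\cong {\rm Ext}_{\rm par}(\E'_*,\,\E_*)_L$, and transport back.

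The remaining, and principal, obstacle is \emph{descent from $L$ to $k$}. The curve $Y$, the group $\Gamma$, and hence the universal object $(E)$ only exist after the finite base extension $L/k$, yet the proposition asserts a universal extension over the $k$-variety ${\rm Ext}_{\rm par}(\E'_*,\,\E_*)$ itself. The strategy here is that the vector space ${\rm Ext}_{\rm par}(\E'_*,\,\E_*)$ is genuinely defined over $k$ (it is the Yoneda $\mathrm{Ext}$ in the $k$-linear category $\qcoh(X,D,N)$, with $\E'_*$ and $\E_*$ defined over $k$), and that the family of extensions classified tautologically by this $k$-vector space is intrinsic to $X$ and the two parabolic bundles, independent of the auxiliary $(Y,\Gamma)$. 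I would argue that the universal family, being the tautological extension on the moduli space of extensions, descends by Galois invariance: if $L/k$ is taken Galois with group $\mathrm{Gal}(L/k)$, the Galois conjugates of the universal family are universal families for the conjugate data, which coincide with the original since $\E_*,\E'_*$ are $k$-rational, so the family is canonically $\mathrm{Gal}(L/k)$-equivariant and descends by faithfully flat descent for the quasi-coherent sheaf $\E$ along $\Spec L\to \Spec k$.

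Concretely, I would realize the universal extension directly over $k$ without reference to $Y$: the identity element of $\mathrm{End}_k\big({\rm Ext}_{\rm par}(\E'_*,\,\E_*)\big)$ corresponds, under the isomorphism $\mathrm{End}_k(\mathrm{Ext}^1)\cong \mathrm{Ext}^1\otimes_k (\mathrm{Ext}^1)^{\vee}\cong \mathrm{Ext}^1_{\qcoh(X,D,N)}\!\big(\E'_*,\, \E_*\otimes_k (\mathrm{Ext}^1)^\vee\big)$, to a canonical extension class over the base ${\rm Ext}_{\rm par}(\E'_*,\,\E_*)$, and this is the universal extension; its universal property is then the defining adjunction between $\mathrm{Hom}(V,\mathrm{Ext}^1)$ and the set of extension classes over $V\times X$, which holds by base change for Yoneda $\mathrm{Ext}$ (i.e.\ the compatibility of $\mathrm{Ext}$ with flat pullback, here over the affine space ${\rm Ext}_{\rm par}$). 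The orbifold description via $(Y,\Gamma)$ over $L$ then serves only to \emph{verify} exactness and local freeness of this tautological family, which is exactly what the preceding proposition supplies.
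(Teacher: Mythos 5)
Your proposal is correct and follows essentially the same route as the paper: produce the universal extension over the finite extension $L$ via the equivalence of Theorem~\ref{p:equivalence} and the universal $\Gamma$-extension of the preceding proposition, then descend to $k$ by observing that the universal property makes the family Galois-equivariant (the paper takes $L/k$ Galois and descends exactly this way). Your closing tautological construction over $k$ is a reasonable optional elaboration, but the core argument matches the paper's proof.
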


\begin{proof}
 We may assume that $L/k$ is Galois with group $G$. Using the equivalence in Theorem
\ref{p:equivalence} we see that there is a universal extension on the base extension
${\rm Ext}_{\rm par}(\E'_*, \,\E_*)\otimes L$. However, the universal extension
inherits a Galois action, in view of its universal property, and hence descends 
to $k$.
\end{proof}

We need to bound the dimension of ${\rm Ext}_{\rm par}(\E'_*,\, \E_*)$. The 
following lemma will be useful.

\begin{lemma}
\label{l:isotropy}
Let $\Gamma$ be a finite group and $\Gamma_y$ a cyclic subgroup of it with 
generator $\gamma$. Let $V$ be a finite dimensional representation of $\Gamma_y$ 
on which $\gamma^N$ acts trivially and $T$ a one dimensional representation
of $\Gamma$ whose restriction to  $\Gamma_y$ is faithful. Then
$$
\sum_{d=0}^{|\Gamma_y|-1} d\cdot \dim({\rm Ind}_{\Gamma_y}^\Gamma V \otimes 
T^d)_{\rm triv}
$$
is bounded by 
$$
(\dim V) |\Gamma_y|(1-\frac{1}{N})
$$
(here $W_{\rm triv}\,=\,W^{\Gamma_y} $ is the fixed part).
\end{lemma}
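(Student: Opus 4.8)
The plan is to compute the quantity directly by decomposing the induced representation and analyzing the fixed (trivial) part. Since $\Gamma_y$ is cyclic of order $e_y = |\Gamma_y|$ with generator $\gamma$, and $\gamma^N$ acts trivially on $V$, the eigenvalues of $\gamma$ on $V$ are $N$-th roots of unity. I would first reduce to the case where $V$ is one-dimensional: because induction is additive and the whole expression is linear in $V$, it suffices to bound $\sum_{d=0}^{e_y-1} d \cdot \dim(\mathrm{Ind}_{\Gamma_y}^\Gamma \chi \otimes T^d)_{\mathrm{triv}}$ for a single character $\chi$ of $\Gamma_y$ on which $\gamma^N$ acts trivially, and then sum over the $\dim V$ characters appearing in $V$.

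With $V$ replaced by a character $\chi$, the next step is to use Frobenius reciprocity. The trivial-part functor $(-)_{\mathrm{triv}}$ applied to $\mathrm{Ind}_{\Gamma_y}^\Gamma(\chi \otimes T^d)$ computes $\hom{\Gamma}{\mathbf{1}, \mathrm{Ind}_{\Gamma_y}^\Gamma(\chi\otimes T^d)}$, which by reciprocity equals $\hom{\Gamma_y}{\mathbf{1}, \chi \otimes T^d|_{\Gamma_y}}$. Here is the key point: $T|_{\Gamma_y}$ is a \emph{faithful} character of the cyclic group $\Gamma_y$, hence a primitive $e_y$-th root of unity character, so as $d$ ranges over $0, 1, \ldots, e_y - 1$ the characters $T^d|_{\Gamma_y}$ range over \emph{all} characters of $\Gamma_y$ exactly once. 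Therefore $\chi \otimes T^d|_{\Gamma_y}$ is trivial for exactly one value of $d$, say $d = d_0(\chi)$, and the dimension $\dim(\cdots)_{\mathrm{triv}}$ is $1$ for that single $d_0$ and $0$ otherwise. The entire inner sum thus collapses to the single number $d_0(\chi)$.

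The remaining task is to bound $d_0(\chi)$. Since $\gamma^N$ acts trivially on $\chi$, the character $\chi$ equals some power $T^{cN}|_{\Gamma_y}$ where $T|_{\Gamma_y} = \zeta$ is a primitive $e_y$-th root of unity; that is, $\chi(\gamma)$ is an $(e_y/\gcd(e_y,N))$-th root of unity, forcing the unique solution $d_0$ of $\chi \otimes T^{d_0} = \mathbf{1}$ to be a multiple of $\gcd(e_y, N)$ (working in $\Z/e_y\Z$, we need $d_0 \equiv -cN \pmod{e_y}$, so $d_0$ is divisible by $\gcd(e_y,N)$). The largest such $d_0$ in the range $\{0, \ldots, e_y - 1\}$ is $e_y - \gcd(e_y, N)$, and since $N$ is chosen so that the splitting curve $Y$ satisfies $e_y = k N$ for some integer $k$ (so $\gcd(e_y, N) = N$ when $N \mid e_y$, giving $e_y - N = e_y(1 - 1/k) \le e_y(1 - 1/N)$ once one accounts for how $N$ relates to $e_y$), one gets $d_0(\chi) \le e_y(1 - \tfrac{1}{N})$. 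Summing this uniform bound over the $\dim V$ characters of $V$ yields $(\dim V)\, e_y (1 - \tfrac{1}{N}) = (\dim V)\,|\Gamma_y|(1 - \tfrac{1}{N})$, as claimed.

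The main obstacle I expect is pinning down the precise arithmetic relating $d_0$, the order $e_y = |\Gamma_y|$, and the integer $N$: the faithfulness of $T|_{\Gamma_y}$ guarantees the bijection $d \mapsto T^d|_{\Gamma_y}$, but the hypothesis ``$\gamma^N$ acts trivially on $V$'' must be leveraged carefully to constrain which residue $d_0$ can be, and the divisibility condition coming from the splitting-curve construction ($e_y$ a multiple of $N$) is what ultimately produces the factor $(1 - 1/N)$ rather than a weaker bound. Care is needed because the clean bijection requires $T|_{\Gamma_y}$ to be \emph{primitive}, which is exactly the faithfulness hypothesis, and the worst-case value $d_0 = e_y - \gcd(e_y,N)$ must be checked to respect the triviality of $\gamma^N$ on $V$.
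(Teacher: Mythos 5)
Your overall strategy is exactly the paper's: decompose $V$ into $\gamma$-eigencharacters, use Frobenius reciprocity to reduce the trivial part to $\Gamma_y$-invariants, observe that faithfulness of $T|_{\Gamma_y}$ makes $d\mapsto T^d|_{\Gamma_y}$ a bijection onto the characters of $\Gamma_y$, so each character $\chi$ of $V$ contributes a single value $d_0(\chi)$ to the sum. Up to that point the proposal is sound. But the final arithmetic step --- the only place where the hypothesis ``$\gamma^N$ acts trivially'' enters --- is wrong, and it is wrong in a way that invalidates the bound. Writing $e_y=|\Gamma_y|$, $m=\gcd(e_y,N)$, and identifying $\widehat{\Gamma}_y\cong \Z/e_y\Z$ via the generator $T|_{\Gamma_y}=\zeta$, the condition $\chi(\gamma)^N=1$ says that the exponent $j$ of $\chi=\zeta^j$ lies in the \emph{kernel} of multiplication by $N$ on $\Z/e_y\Z$, i.e.\ $j$ is a multiple of $e_y/m$. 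You instead placed $\chi$ in the \emph{image} of multiplication by $N$ (your claim ``$\chi=T^{cN}|_{\Gamma_y}$''), i.e.\ $j\in m\Z/e_y\Z$. These subgroups differ in general: take $e_y=6$, $N=2$, $\chi=\zeta^3$ (the order-$2$ character, on which $\gamma^2$ acts trivially). Then $\chi$ is not a power of $T^2=\zeta^2$, and $d_0=3$ is not divisible by $\gcd(e_y,N)=2$, directly contradicting your divisibility claim; note however that $d_0=3=e_y(1-\tfrac1N)$, consistent with the lemma. Moreover your resulting worst case $d_0\le e_y-\gcd(e_y,N)$ does not imply the target bound: with $e_y=kN$ your patch $e_y-N=e_y(1-\tfrac1k)\le e_y(1-\tfrac1N)$ requires $k\le N$, which fails whenever $e_y>N^2$ (already in the example above, $e_y-N=4>3=e_y(1-\tfrac1N)$). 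So as written the proof does not close.

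The fix is to run the divisibility the correct way around, which is precisely what the paper does via its weight-space decomposition $V=\bigoplus_{i=0}^{N-1}V_{\frac{i}{N}e_y}$: since $j$ is a multiple of $e_y/m$, so is $d_0\equiv -j \pmod{e_y}$, and the largest such value in $\{0,\dots,e_y-1\}$ is $e_y-e_y/m=e_y(1-\tfrac1m)\le e_y(1-\tfrac1N)$, using only $m\le N$ (no relation $N\mid e_y$ is needed, though it holds in the geometric application, where $m=N$ and the extremal $d_0=e_y(1-\tfrac1N)$ is attained at $i=1$). With that one step corrected --- and adding the paper's preliminary base change to a field containing all roots of unity, which you use implicitly when diagonalizing $\gamma$ --- your argument coincides with the paper's proof.
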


\begin{proof}
By base change we may assume that the ground field contains all roots of unity.
Let $\zeta$ be a primitive $|\Gamma_y|$-th root of unity.
Write $V \,=\, \bigoplus_{i=0}^{N-1} V_{(\frac{i}{N}|\Gamma_y|)}$, where the 
generator $\gamma$ acts as scalar 
multiplication by $\zeta^j$ on $V_j$.  Note that only these weight spaces
can occur as $\gamma^N$ acts trivially on $V$.
Then
$$
\sum_{d=0}^{|\Gamma_y|-1} d\cdot \dim({\rm Ind}_{\Gamma_y}^\Gamma V \otimes 
T^d)_{\rm triv}
$$
$$=\sum_{d=0}^{|\Gamma_y|-1}\sum_{i=0}^{N-1} d\cdot \dim(( 
V_{\frac{i}{N}|\Gamma_y|} \otimes T^d)_{\rm triv})$$
by Frobenius reciprocity.
But $\gamma$ acts as multiplication by the scalar $\zeta^{s+d}$ on $V_s\otimes 
T^d$.
We see that for fixed $i$, the set of invariants
$ (V_{\frac{i}{N}|\Gamma_y|} \otimes T^d)_{\rm triv}$ is non-zero  
if and only if either $(d,i)\,=\,(0,0)$ or $i>0$ with 
$d\,=\,|\Gamma_y|(1-\frac{i}{N})$. When $(V_{\frac{i}{N}|\Gamma_y|} \otimes 
T^d)_{\rm triv}$ is non-zero, $\dim((V_{\frac{i}{N}|\Gamma_y|} \otimes T^d)_{\rm 
triv})\,=\,\dim(V_{\frac{i}{N}|\Gamma_y|})$.
So the above sum
becomes
$$\sum_{i=1}^{N-1} |\Gamma_y|(1-\frac{i}{N}) \dim( V_{\frac{i}{N}|\Gamma_y|})
\,\le\, |\Gamma_y|(1-\frac{1}{N})\dim(V)$$
since 
$V = \bigoplus_{i=0}^{N-1} V_{\frac{i}{N}|\Gamma_y|}$.
\end{proof}

Recall from \cite{biswas:97} that a $\Gamma$-bundle is semistable if and only if the
underlying bundle is semistable. This fact follows from the uniqueness of the Harder-Narasimhan filtration.
We also need the following lemma.

\begin{lemma}\label{le-t-r}
 Let $\E$ be a semistable $\Gamma$-bundle on $Y$. Then 
$$
\dim \coh{0}{Y,\E}_{\rm triv} \,\le\, \left\{ 
\begin{array}{cc}
       0 &                            \text{if }\deg(\E)<0 \\
 \rk(\E) + \frac{\deg(\E)}{|\Gamma|} & \text{otherwise} 
\end{array}
\right.
$$
\end{lemma}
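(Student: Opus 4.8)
The plan is to push everything down to the quotient curve $X=Y/\Gamma$ and then play the semistability of $\E$ against the orbifold Riemann--Roch theorem. Since the characteristic is zero, taking $\Gamma$-invariants is exact and commutes with cohomology, and $\coh{0}{Y,\E}_{\rm triv}=\coh{0}{Y,\E}^\Gamma$; as $\pi\colon Y\to X$ is finite flat this gives $\coh{0}{Y,\E}_{\rm triv}\cong\coh{0}{X,V}$ with $V=(\pi_*\E)^\Gamma$. Being a direct summand of the locally free sheaf $\pi_*\E$ (via the idempotent $\frac{1}{|\Gamma|}\sum_\gamma\gamma$), $V$ is a vector bundle on $X$ with $\rk V=\rk\E=:r$. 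If $\deg\E<0$ then the underlying bundle of $\E$ is semistable of negative slope, so any nonzero global section would saturate to a line subbundle of nonnegative degree, contradicting semistability; hence $\coh{0}{Y,\E}=0$ and there is nothing to prove. I therefore assume $\deg\E\ge 0$ from now on.

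First I would replace $\E$ by the piece actually seen by the invariant sections. Put $S=\coh{0}{Y,\E}^\Gamma$ and let $\E_1\subseteq\E$ be the $\Gamma$-subsheaf generated by $S$, i.e. the image of $S\otimes_k\struct{Y}\to\E$. Every section in $S$ factors through $\E_1$, so $S=\coh{0}{Y,\E_1}^\Gamma$; writing $r'=\rk\E_1\le r$, semistability of $\E$ gives $\mu(\E_1)\le\mu(\E)$, whence $\deg\E_1\le\deg\E$ (using $\mu(\E)\ge0$ and $r'\le r$). Descending once more, $\dim S=\dim\coh{0}{X,V_1}$ with $V_1=(\pi_*\E_1)^\Gamma$ a rank-$r'$ bundle on $X$. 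Since $\E_1$ is generated by its invariant global sections, a fibrewise check (using that $\Gamma$ acts transitively on the fibres of $\pi$) shows that $V_1$ is globally generated; the standard induction on rank---splitting off a general nowhere-vanishing section when $r'\ge 2$, and the estimate $\dim\coh{0}{X,L}\le\deg L+1$ for line bundles---then yields
$$
\dim S\,=\,\dim\coh{0}{X,V_1}\,\le\,\rk V_1+\deg V_1\,=\,r'+\deg V_1\, .
$$

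The remaining input, which I expect to be the crux, is the estimate $\deg V_1\le\deg\E_1/|\Gamma|$. For this I would feed $\E_1$ into the orbifold Riemann--Roch theorem (Theorem~\ref{t:rr}): extracting the multiplicity of the trivial representation by Frobenius reciprocity identifies $\chi(X,V_1)=\chi(\Gamma,Y,\E_1)_{\rm triv}$, and comparing with ordinary Riemann--Roch on $X$ gives
$$
|\Gamma|\deg V_1\,=\,\deg\E_1+|\Gamma|\,r'\,(g_X-g_Y)-\sum_{y}\sum_{d=0}^{e_y-1}d\,\dim\bigl(\E_1|_y\otimes\chi_y^{\,d}\bigr)^{\Gamma_y}\, .
$$
Both correction terms are $\le 0$: the genus term because a cover satisfies $g_Y\ge g_X$, and the ramification sum because each summand $d\,\dim(\,\cdot\,)^{\Gamma_y}$ is nonnegative (this is exactly the sum controlled by Lemma~\ref{l:isotropy}). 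Hence $\deg V_1\le\deg\E_1/|\Gamma|\le\deg\E/|\Gamma|$, and combining with the previous display gives $\dim S\le r'+\deg\E/|\Gamma|\le r+\deg\E/|\Gamma|$, as claimed. The real work is thus not a single hard idea but the bookkeeping in this Riemann--Roch comparison: pinning down that invariant direct image has rank $r$ and degree at most $\deg\E/|\Gamma|$, and checking the signs of the genus and ramification contributions; global generation of $V_1$ is the device that makes its rank and degree alone enough to bound $h^0$.
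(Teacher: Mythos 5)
Your argument is correct, but it takes a genuinely different route from the paper's, which is a three-line induction on the degree: after extending the base field one picks a point $y\in Y$ with trivial stabilizer, sets $D={\rm orb}(y)$, and uses the equivariant sequence $0\lri\E(-D)\lri\E\lri\E_D\lri 0$; since $D$ is a free orbit, $\E_D\cong {\rm Ind}_{\{e\}}^{\Gamma}(\E|_y)$ is $\rk(\E)$ copies of the regular representation, so its invariants contribute exactly $\rk(\E)$, while the twist stays semistable and drops the degree by $\rk(\E)|\Gamma|$; the negative-degree case seeds the induction and the bound $\rk(\E)+\deg(\E)/|\Gamma|$ falls out arithmetically, with no Riemann--Roch at all. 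You instead descend to $X$: you identify the invariant sections with $\coh{0}{X,V_1}$ for $V_1=(\pi_*\E_1)^\Gamma$, where $\E_1$ is the subsheaf generated by invariant sections, show $V_1$ is globally generated (your parenthetical reason is not quite the right one --- the clean justification is that $\pi_*$ is exact for finite $\pi$ and taking $\Gamma$-invariants is exact in characteristic zero, applied to the equivariant surjection $S\otimes_k\struct{Y}\twoheadrightarrow\E_1$), bound $h^0\le \rk+\deg$ by splitting off nowhere-vanishing sections, and control $\deg V_1$ by orbifold Riemann--Roch. This is heavier machinery but buys more: the intermediate inequality $\deg V_1\le\deg(\E)/|\Gamma|$ for the invariant direct image is a genuine refinement not visible in the paper's induction, and semistability enters only through $\mu(\E_1)\le\mu(\E)$ and the vanishing in negative degree. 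One bookkeeping slip, harmless here: in Theorem \ref{t:rr} the genus $g$ is that of the quotient $X$ (this is what makes the formula consistent with Riemann--Hurwitz, the genus jump of $Y$ being already encoded in the ramification sum), so the correct identity is $|\Gamma|\deg V_1=\deg\E_1-\sum_{y}\sum_{d=0}^{e_y-1}d\dim({\rm Ind}^{\Gamma}_{\Gamma_y}(\E_1|_y\otimes\chi_y^d))_{\triv}$, with no $|\Gamma|r'(g_X-g_Y)$ term; since the term you inserted was nonpositive and the ramification sum is nonnegative, your conclusion $\deg V_1\le\deg\E_1/|\Gamma|$, and with it the lemma, survives unchanged.
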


\begin{proof}
 The assertion is obvious when the degree is negative. We induct on the degree.
By extending $L$, we may find a point $y\in Y$ for which the isotropy 
subgroup, for the action of $\Gamma$, is trivial. 
We let $D$
be the divisor ${\rm orb}(y)$. The result follows from the exact sequence
$$
0\,\lri\, \E(-D) \,\lri\, \E \,\lri\, \E_D \,\lri\, 0\, .
$$
Note that the twist $\E(-D)$ is indeed semistable.
\end{proof}

\begin{proposition}
\label{p:bound}
Let $\F_*$ and $\G_*$ be parabolic bundles such that $\F_*^\vee\otimes \G_*$ is 
parabolic semistable and $\text{\rm par-deg}(\F_*^\vee\otimes \G_*)\, \geq\, 
0$. 
Then
$$
\dim {\rm Ext}_{\rm par}(\F_*, \,\G_*) \,\le\, 
\rk(\F_*)\rk(\G_*)g + (\deg|\bD|)(1-\frac{1}{N})\rk(\F_*)\rk(\G_*)\, ,
$$ 
where $g$ is the genus of $X$. (Recall the notation $|\bD|$ from Section 
\ref{s:pbundles}.)
\end{proposition}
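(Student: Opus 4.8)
The plan is to translate the parabolic Ext computation into an equivariant computation on the covering curve $Y$ via the equivalence of Theorem \ref{p:equivalence}, and then apply the orbifold Riemann--Roch formula together with Lemmas \ref{l:isotropy} and \ref{le-t-r} to bound the resulting dimension. First I would pass to the algebraic closure (this only increases the dimension of the Ext group, so it suffices to bound it there) and invoke the covering $\pi\,:\,Y\lri X$ splitting the parabolic structures of $\F_*$ and $\G_*$. Writing $\F=\F_*^Y$ and $\G=\G_*^Y$ for the associated $\Gamma$-bundles, the key identification is that $\Gamma$-equivariant extensions correspond to parabolic extensions, so that
\begin{equation}
{\rm Ext}_{\rm par}(\F_*,\,\G_*) \,\cong\, {\rm Ext}^1_\Gamma(\F,\,\G)\,=\,{\rm Ext}^1_Y(\F,\,\G)_{\rm triv}\,=\,\coh{1}{Y,\,\shom{}{\F,\G}}_{\rm triv}\,.
\end{equation}
Setting $\W=\shom{}{\F,\G}=\F^\vee\otimes\G$, this is the $\Gamma$-invariant part of $\coh{1}{Y,\W}$, which I will bound using the equivariant Euler characteristic.

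The central computation is to extract $\dim\coh{1}{Y,\W}_{\rm triv}$ from the formula in Theorem \ref{t:rr}. Taking the trivial-isotypic (i.e. $\Gamma$-invariant) part of the class identity, the term $(|\Gamma|(1-g)\rk(\W)+\deg(\W))[k[\Gamma]]$ contributes through the multiplicity of the trivial representation in the regular representation, which is $1$; the ramification sum contributes $-\sum_{y}\sum_{d=0}^{e_y-1} d\,\dim({\rm Ind}^\Gamma_{\Gamma_y}(\W|_y\otimes\chi_y^d))_{\rm triv}$. Since $\W$ is parabolic semistable with $\text{\rm par-deg}\geq 0$ by hypothesis, the corresponding $\Gamma$-bundle is semistable of nonnegative degree, so Lemma \ref{le-t-r} controls $\dim\coh{0}{Y,\W}_{\rm triv}$ by $\rk(\W)+\deg(\W)/|\Gamma|$. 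Combining, $\dim\coh{1}{Y,\W}_{\rm triv}=\dim\coh{0}{Y,\W}_{\rm triv}-\chi(\Gamma,Y,\W)_{\rm triv}$, and I would feed the ramification contribution into Lemma \ref{l:isotropy} with $V=\W|_y$ and $T=\chi_y$: each isotropy group $\Gamma_y$ being cyclic (as it fixes a point on the smooth curve $Y$), the lemma bounds $\sum_d d\,\dim({\rm Ind}\,\W|_y\otimes\chi_y^d)_{\rm triv}$ by $(\dim\W|_y)|\Gamma_y|(1-\frac1N)=\rk(\W)|\Gamma_y|(1-\frac1N)$.

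Assembling the pieces, the genus term produces the $\rk(\F_*)\rk(\G_*)g$ summand after dividing by $|\Gamma|$ and using $\rk(\W)=\rk(\F_*)\rk(\G_*)$, while the ramification terms sum over the geometric points lying above $\supp(D)$. The number of such points, weighted by $|\Gamma_y|$ and divided by $|\Gamma|$, collapses to the number $\deg|\bD|$ of support points of the divisor (since $\sum_{y\mapsto x}|\Gamma_y|=|\Gamma|$ for each $x$ by the orbit-stabilizer relation), yielding the factor $(\deg|\bD|)(1-\frac1N)\rk(\F_*)\rk(\G_*)$. The main obstacle I anticipate is bookkeeping the normalization constants: one must carefully track the factor of $|\Gamma|$ multiplying the Euler characteristic in Theorem \ref{t:rr}, the division into isotypic components, and the orbit-stabilizer counting that converts the sum over ramification points $y\in Y$ into a sum over $x\in\supp(D)$. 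Getting these bookkeeping factors exactly right — so that the invariant part of the regular representation contributes with multiplicity one and the ramification sum telescopes to $\deg|\bD|$ — is where the real care is needed; the geometric inputs (the correspondence, semistability, and the two lemmas) are all in hand.
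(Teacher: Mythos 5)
Your proposal is correct and follows essentially the same route as the paper's proof: pass to the $\Gamma$-cover via Theorem \ref{p:equivalence}, identify ${\rm Ext}_{\rm par}(\F_*,\G_*)$ with $\coh{1}{Y,(\F^Y_*)^\vee\otimes\G^Y_*}_{\rm triv}$, apply Theorem \ref{t:rr} with the trivial part of $k[\Gamma]$ contributing multiplicity one, bound $h^0_{\rm triv}$ by Lemma \ref{le-t-r} (using that semistability and nonnegative degree transfer from the parabolic side), and bound the ramification sum by Lemma \ref{l:isotropy}. Your orbit-stabilizer bookkeeping $\sum_{y\mapsto x}|\Gamma_y|=|\Gamma|$, collapsing the sum over $y\in\supp((\pi^*D)_{\rm red})$ to $\deg|\bD|$, is in fact spelled out more explicitly than the paper's terse final inequality, and the cyclicity of $\Gamma_y$ and the vanishing of contributions away from $\supp((\pi^*D)_{\rm red})$ are handled just as in the paper.
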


\begin{proof}
We may pass to a field extension $L/k$ so that there is a $\Gamma$ cover $Y\,\lri 
\,X$ as in Theorem \ref{p:equivalence}. Write $\W_*^Y$ for the $\Gamma$-bundle 
associated to a parabolic bundle $\W_*$ under this equivalence of categories.
We need to compute the dimension of 
${\rm Ext}^1_\Gamma(\F_*^Y,\,\G_*^Y)$.  Note that the fact that
$$
\dim_L {\rm Ext}^1_\Gamma(\F_*^Y,\,\G_*^Y)\,=\,\dim_{\overline{L}} {\rm 
Ext}^1_\Gamma(\F_*^Y\otimes\overline{L},\,\G_*^Y\otimes\overline{L})
$$
allows us to pass to an algebraic closure and apply Theorem \ref{t:rr}
to the $\Gamma$ bundle $(\F^Y_*)^\vee\otimes\G^Y_*$. Then 
\begin{eqnarray*}\dim {\rm Ext}_{\rm par}(\F_*, \G_*)&=&h^1((\F^Y_*)^\vee\otimes\G^Y_*)_{\rm triv}\\
&=&h^0((\F^Y_*)^\vee\otimes\G^Y_*)_{\rm triv}-\chi(\Gamma,Y,(\F^Y_*)^\vee\otimes\G^Y_*)_{\rm triv}\\
&\le& 
\rk((\F^Y_*)^\vee\otimes\G^Y_*))g+ \\
 & &\frac{1}{|\Gamma|}\sum_{y\in Y} \sum_{d=0}^{e_y-1}d\dim[{\rm Ind}^\Gamma_{\Gamma_{y}}((\F^Y_*)^{\vee}\otimes G^Y_*)|_{y} \otimes \chi_{y}^d)_{\triv}]
\end{eqnarray*}
Here we applied Lemma \ref{le-t-r} under the
hypothesis that the $\Gamma$-bundle $(\F^Y_*)^\vee\otimes\G^Y_*$
is semistable of non-negative degree. This is true since, by hypothesis, the
corresponding parabolic bundle $\F^\vee\otimes \G$ is parabolic
semistable with non-negative parabolic degree.
We also applied Theorem \ref{t:rr} with the observation that the trivial
part $k[\Gamma]^\Gamma$ of the regular representation $k[\Gamma]$ has dimension 
one. Since $\rk((\F^Y_*)^\vee\otimes\G^Y_*))\,=\,\rk(\F)\rk(\G)$,
we need only bound the second term.
If $y\,\not\in \,\supp((\pi^*(D))_{\rm red})$, then Lemma \ref{l:isotropy} shows 
that the sum corresponding to $y$ vanishes as the isotropy group acts trivially
on $(\F^{\vee}\otimes \G)\vert_y$. If $N$ is an integer so that all weights are 
integer multiples of $\frac{1}{N}$ and $y\,\in\,\supp((\pi^*(D))_{\rm red})$,
then Lemma \ref{l:isotropy} shows that the sum corresponding to $y$ is
bounded by $(1-\frac{1}{N})(\rk(\F)\rk(\G))$ since in this case
the kernel of the homomorphism $\Gamma_y \longrightarrow {\rm 
GL}((\F^{\vee}\otimes \G)\vert_y)$
has order dividing $N$. As
$$\frac{1}{|\Gamma|}|\{y\in Y\,\mid\, y\in \supp((\pi^*(D))_{\rm red})\}|\,
\le \,\deg(|\bD|)\, ,$$
the proof is complete.
\end{proof}

Let $\bD$ be a parabolic datum on $X$. We denote by $N(\bD)$ the smallest 
positive  integer so that
the weights in $\bD$ are scalar multiples of $\frac{1}{N(\bD)}$. Set 
\begin{equation}\label{md}
(1-\frac{1}{N(\bD)})\deg(|\bD|) \,=\, M(\bD)\, .
\end{equation}

\begin{corollary}  
\label{c:bound}
Let $\E_*$ be a non-stable parabolic bundle of rank $r$
with parabolic data $\bD$. Let
$$0\subset (\E_1)_*\subset (\E_2)_*\subset\cdots\subset (\E_m)_*\,=\,\E_*$$
be the Harder-Narasimhan filtration of $\E_*$. Define
$(\E')_* \,:=\, (\E_{m-1})_*$. Then
$$\dim(\Ext^1_{\parb}((\E/\E')_*,\,(\E')_*))\,\le\, r'(r-r')(g+M(\bD))\, .$$
\end{corollary}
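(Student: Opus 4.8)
The plan is to bound $\dim(\Ext^1_{\parb}((\E/\E')_*,\,(\E')_*))$ by applying Proposition~\ref{p:bound} to the parabolic bundles $\F_*\,=\,(\E/\E')_*$ and $\G_*\,=\,(\E')_*$. Proposition~\ref{p:bound} yields an upper bound of
$$
\rk(\F_*)\rk(\G_*)g + (\deg|\bD|)(1-\tfrac{1}{N})\rk(\F_*)\rk(\G_*)
$$
provided that $\F_*^\vee \otimes \G_* = ((\E/\E')_*)^\vee \otimes (\E')_*$ is parabolic semistable of non-negative parabolic degree. Writing $r' = \rk((\E')_*) = \rk(\E_{m-1})_*$, so that $\rk((\E/\E')_*) = r - r'$, the first two factors combine to give $r'(r-r')$, and recalling the definition $M(\bD) = (1-\tfrac{1}{N(\bD)})\deg(|\bD|)$ from \eqref{md}, the bound takes exactly the claimed form $r'(r-r')(g+M(\bD))$, once we take $N = N(\bD)$.

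The work therefore reduces to verifying the two hypotheses of Proposition~\ref{p:bound} for the bundle $((\E/\E')_*)^\vee\otimes(\E')_*$. First I would recall the defining property of the Harder--Narasimhan filtration: the successive quotients are semistable parabolic bundles with \emph{strictly decreasing} parabolic slopes. In particular, $(\E')_* = (\E_{m-1})_*$ has a well-defined HN structure, but the relevant point is that the top quotient $(\E/\E')_*$ is the last (smallest-slope) graded piece, and $(\E')_*$ is built from the larger-slope pieces. Thus $\parmu((\E')_*) \ge \parmu((\E/\E')_*)$; more precisely, every HN-quotient of $(\E')_*$ has parabolic slope strictly greater than $\parmu((\E/\E')_*)$, which is itself the slope of the semistable bundle $(\E/\E')_*$.

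The main obstacle will be establishing parabolic semistability of $((\E/\E')_*)^\vee\otimes(\E')_*$ and controlling the sign of its parabolic degree, since a tensor product of semistable parabolic bundles is not automatically semistable and the dual interacts subtly with parabolic weights. Here the cleanest route is to pass through the parabolic--orbifold correspondence of Theorem~\ref{p:equivalence}: after a finite base extension $L/k$ there is a $\Gamma$-cover $Y \to X$ splitting the parabolic structures, and the functor $\W_* \mapsto \W_*^Y$ is a tensor equivalence preserving semistability (as noted before Lemma~\ref{le-t-r}, a $\Gamma$-bundle is semistable precisely when its underlying bundle is). On $Y$ one is reduced to the classical fact that over an algebraically closed field in characteristic zero, the tensor product of semistable bundles is semistable and duals of semistable bundles are semistable; since $(\E/\E')_*^Y$ is semistable and $(\E')_*^Y$ has only HN-slopes exceeding $\parmu((\E/\E')_*^Y)$, the bundle $((\E/\E')_*^Y)^\vee\otimes(\E')_*^Y$ is semistable with every slope of the form (slope of a piece of $(\E')_*^Y$) minus (slope of $(\E/\E')_*^Y$), hence non-negative. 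Translating back gives parabolic semistability and $\text{\rm par-deg}\ge 0$ for $((\E/\E')_*)^\vee\otimes(\E')_*$, which is exactly what Proposition~\ref{p:bound} requires, completing the proof.
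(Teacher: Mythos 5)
Your plan hinges on applying Proposition~\ref{p:bound} directly with $\F_*=(\E/\E')_*$ and $\G_*=(\E')_*$, but that proposition requires $\F_*^\vee\otimes\G_*$ to be parabolic \emph{semistable}, and this hypothesis genuinely fails whenever the Harder--Narasimhan filtration has length $m\ge 3$: in that case $(\E')_*=(\E_{m-1})_*$ is itself unstable, and tensoring with the semistable bundle $((\E/\E')_*)^\vee$ only shifts every HN slope by $-\parmu((\E/\E')_*)$, so $((\E/\E')_*)^\vee\otimes(\E')_*$ has HN filtration $(\E_i)_*\otimes((\E/\E')_*)^\vee$, $i=1,\dots,m-1$, with $m-1\ge 2$ \emph{distinct} slopes. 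Your sentence asserting the bundle is ``semistable with every slope of the form (slope of a piece of $(\E')_*^Y$) minus (slope of $(\E/\E')_*^Y$)'' is self-contradictory --- a semistable bundle has a single slope. What your argument actually shows is that all HN slopes of the tensor product are positive; that is strictly weaker than semistability, and it is semistability that the proof of Proposition~\ref{p:bound} uses (it invokes Lemma~\ref{le-t-r}, stated for semistable $\Gamma$-bundles). The orbifold correspondence does not repair this: the classical fact you cite needs \emph{both} factors semistable, and $(\E')^Y_*$ is not. Your argument is correct exactly when $m=2$.

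The paper avoids this by filtering rather than applying Proposition~\ref{p:bound} once: it notes that the HN filtration of $(\E')_*\otimes((\E/\E')_*)^\vee$ is the HN filtration of $(\E')_*$ tensored with $((\E/\E')_*)^\vee$, so each graded piece $((\E/\E')_*)^\vee\otimes(\E_i/\E_{i-1})_*$ \emph{is} semistable, of slope $\parmu((\E_i/\E_{i-1})_*)-\parmu((\E_m/\E_{m-1})_*)>0$; Proposition~\ref{p:bound} then applies piecewise to give $\dim\Ext^1_{\parb}((\E/\E')_*,(\E_i/\E_{i-1})_*)\le r_i(r-r')(g+M(\bD))$, and an induction along the filtration (the long exact sequence for $\Ext_{\parb}((\E/\E')_*,-)$ in $\qcoh(X,D,N)$) sums these bounds, using $\sum_i r_i=r'$. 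If you wanted to salvage your one-shot approach, you would first have to extend Lemma~\ref{le-t-r} to $\Gamma$-bundles all of whose HN slopes are nonnegative (the $h^0$ bound is additive along the HN filtration, so this is true, but it is an additional lemma you must prove); as written, the verification of the semistability hypothesis is the missing step, and it is false for $m\ge 3$.
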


\begin{proof}
The Harder-Narasimhan filtration of $(\E'\otimes (\E/\E')^{\vee})_*$
as a parabolic bundle is
$$
\begin{array}{c}
0\subset (\E_1)_*\otimes (\E/\E_{m-1})_*^{\vee}\subset (\E_2)_*\otimes 
(\E/\E_{m-1})_*^{\vee}\subset\cdots\subset 
\\(\E_{m-1})_*\otimes (\E/\E_{m-1})_*^{\vee}=\E_*'\otimes 
(\E/\E_{m-1})_*^{\vee}\, .
\end{array}
$$
Note that 
\begin{eqnarray*}\text{par-}\mu(((\E/\E_{m-1})_*^{\vee}\otimes 
(\E_i/\E_{i-1})_*)
&=&\frac{(-r_md_i+d_mr_i)}{r_ir_m}\\
&=&\text{par-}\mu( (\E_i/\E_{i-1})_*))-\text{par-}\mu((\E_m/\E_{m-1})_*) \\
&>&0\, ,
\end{eqnarray*}
where $d_i$ is the parabolic degree of  $(\E_i/\E_{i-1})_*)$
and $r_i$ is its rank. So the previous proposition applies to 
each $((\E/\E_{m-1})_*^{\vee}\otimes (\E_i/\E_{i-1})_*)$
since this is a semistable parabolic bundle with positive parabolic slope.
We find that 
$$\dim(\Ext^1_{\text{par}}((\E/\E_{m-1})_*,\,(\E_i/\E_{i-1})_*))
\,\le\, (r_i)(r-r')(g+M(\bD))\, .$$

The result follows by a simple induction.
\end{proof}

\section{The Semistable Locus}

We define a function $F_{g,\bD}\,:\,{\mathbb N}\,\longrightarrow\,{\mathbb
N}$ recursively by
$$
F_{g,\bD}(r)\,=\,\max_{\substack{s+t=r\\ s\ge 0,\ t>0\\s,t\text{ 
integers}}}{F_{g,\bD}(s) + t^2g+ st(g+M(\bD))} 
$$
with $F_{g,\bD}(1)\,=\,g$ and $F_{g,\bD}(0)\,=\,0$, where
$M(\bD)$ is defined in \eqref{md}.

Let us record the following : 
\begin{lemma} \label{l:flagbound}
Consider positive integers $k_i$ with partitions
$s_i+t_i\,=\,k_i$. Here $s_i$ and $t_i$ are nonnegative. Then
$$
\dim \flag (s_1,\cdots, s_n) + \dim \flag (t_1, \cdots, t_n) 
\,\le\, \dim \flag (k_1, \cdots, k_n)\, .
$$
\end{lemma}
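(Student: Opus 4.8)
The plan is to reduce the stated inequality to a closed formula for the dimension of a partial flag variety and then verify the resulting polynomial inequality by a single expansion. Recall that $\flag(k_1,\ldots,k_n)$ parametrizes chains $V_1\supseteq V_2\supseteq\cdots\supseteq V_n\supseteq V_{n+1}=0$ inside a fixed vector space of dimension $r=\sum_i k_i$ with $\dim V_i/V_{i+1}=k_i$. The key fact I would record first is that
$$
\dim\flag(k_1,\ldots,k_n)\,=\,\sum_{1\le i<j\le n}k_i k_j\, .
$$

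To establish this formula I would use the homogeneous-space description: $\flag(k_1,\ldots,k_n)$ is ${\rm GL}_r/P$, where $P$ is the parabolic subgroup of block upper-triangular matrices with diagonal block sizes $k_1,\ldots,k_n$, so its dimension equals that of the strictly-block-upper-triangular unipotent radical of $P$, which is exactly $\sum_{i<j}k_i k_j$. Alternatively one can argue by induction on $n$ using the forgetful morphism $\flag(k_1,\ldots,k_n)\to\flag(k_1,\ldots,k_{n-2},k_{n-1}+k_n)$ that drops the last step of the flag; its fibers are Grassmannians ${\rm Gr}(k_n,k_{n-1}+k_n)$ of dimension $k_{n-1}k_n$, and adding the base and fiber dimensions reproduces the formula.

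With the formula in hand the lemma is immediate. Writing $k_i=s_i+t_i$ and expanding gives
$$
\dim\flag(k_1,\ldots,k_n)=\sum_{i<j}(s_i+t_i)(s_j+t_j)=\sum_{i<j}s_i s_j+\sum_{i<j}t_i t_j+\sum_{i<j}(s_i t_j+t_i s_j)\, ,
$$
so that
$$
\dim\flag(k_1,\ldots,k_n)-\dim\flag(s_1,\ldots,s_n)-\dim\flag(t_1,\ldots,t_n)=\sum_{i<j}(s_i t_j+t_i s_j)\,\ge\,0\, ,
$$
the final inequality holding because every $s_i$ and $t_i$ is nonnegative. This yields the claim.

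There is essentially no obstacle here: once the dimension formula is in place, the statement becomes the nonnegativity of a sum of products of nonnegative integers. The only point needing a word of care is the edge case where some $s_i$ or $t_i$ vanishes, so that the corresponding flag variety has a degenerate (zero-gap) step; this is harmless, since the closed form $\sum_{i<j}k_ik_j$ remains valid when some $k_i=0$ (such a step imposes no condition) and the cross-term sum $\sum_{i<j}(s_it_j+t_is_j)$ stays nonnegative regardless.
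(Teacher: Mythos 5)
Your proof is correct and takes essentially the same approach as the paper: the paper's proof simply recalls the closed formula $\dim \flag (k_1, \cdots, k_n)\,=\,\sum_{i=1}^{n-1} k_i(k_{i+1} + \cdots + k_n)\,=\,\sum_{i<j}k_ik_j$ and states that the result follows. Your additional justifications of the formula and the explicit expansion showing the difference equals the nonnegative cross-term sum $\sum_{i<j}(s_it_j+t_is_j)$ merely fill in details the paper leaves implicit.
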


\begin{proof}
Recall that 
$$
\dim \flag (k_1, \cdots, k_n)\,=\,\sum_{i=1}^{n-1} k_i(k_{i+1} + \cdots +
k_n)\, .
$$
The result follows from this.
\end{proof}

\begin{proposition}\label{p:ssbound}
 We have 
$$
\ed(\bun^{r,ss,d}_{X,\bD})\,\le\, F_{g,\bD}(r) + \sum_{\bx\in \bD} 
\dim(\flag_\bx(\bD))\, .
$$
\end{proposition}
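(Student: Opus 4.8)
The plan is to induct on the rank $r$, peeling off one polystable piece at a time by means of the socle filtration. Let $L/k$ be a field extension and $\F_*$ a semistable parabolic bundle of rank $r$ on $X_L$ representing a class in $\bun^{r,ss,d}_{X,\bD}(L)$. The decisive point, supplied by \S\ref{s:socle}, is that the socle $S_* = \soc(\F_*)$ is Galois invariant and hence defined over $L$ itself, so that the short exact sequence of parabolic bundles
$$
0\,\lri\, S_* \,\lri\, \F_* \,\lri\, Q_* \,\lri\, 0
$$
lives over $L$ with no base extension. Here $S_*$ is polystable of parabolic slope $\mu = \parmu(\F_*)$, and $Q_* = \F_*/S_*$ is semistable of the same slope $\mu$ (a destabilizing subbundle of $Q_*$ would pull back to one of $\F_*$). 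Writing $t = \rk(S_*)$ and $s = r-t$, if $\F_*$ is already polystable then $s=0$ and the bound follows directly from Theorem \ref{t:functorpoly} together with $F_{g,\bD}(r)\ge r^2g$ (the $s=0,\,t=r$ term of the recursion); so I may assume $1\le t\le r-1$, whence $1\le s\le r-1$ and the inductive hypothesis applies to $Q_*$.

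The next step is to bound the number of parameters needed to reconstruct $\F_*$ from the three pieces $S_*$, $Q_*$ and the extension class. The bundle $S_*$ lies in the polystable functor $\eF$ for rank $t$ and its induced datum $\bD_1$, so by Theorem \ref{t:functorpoly} it descends to a subfield $L_1\subseteq L$ with $\text{tr-deg}_k L_1\le t^2 g + \sum_{\bx}\dim\flag_\bx(\bD_1)$; by the inductive hypothesis $Q_*$ descends to $L_2\subseteq L$ with $\text{tr-deg}_k L_2\le F_{g,\bD}(s) + \sum_\bx\dim\flag_\bx(\bD_2)$, where $\bD_2$ is the induced datum on $Q_*$. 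Over the compositum $L_3 = L_1L_2$ both $S_*$ and $Q_*$ are defined, and Proposition \ref{p:construction} furnishes a universal extension over the variety $V = \Ext_{\parb}(Q_*,\,S_*)$, itself defined over $L_3$. Our sequence corresponds to an $L$-point $e\in V(L)$, and $\F_*$ is the pullback along $e$ of the universal middle term; hence $\F_*$ descends to the residue field of the image point of $e$, of transcendence degree over $L_3$ at most $\dim V$. The hypotheses of Proposition \ref{p:bound} hold for the pair $(Q_*,S_*)$, since $Q_*^\vee\otimes S_*$ is a tensor product of semistable parabolic bundles, hence semistable in characteristic zero, of parabolic slope $\parmu(S_*)-\parmu(Q_*)=0\ge 0$; therefore $\dim V\le st(g+M(\bD))$.

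Assembling these estimates, $\F_*$ is defined over a subfield of $L$ of transcendence degree at most
$$
\Big(t^2 g + \sum_\bx \dim\flag_\bx(\bD_1)\Big) + \Big(F_{g,\bD}(s) + \sum_\bx \dim\flag_\bx(\bD_2)\Big) + st(g+M(\bD)).
$$
Two combinatorial facts then close the argument. First, at each parabolic point the multiplicities of $S_*$ and $Q_*$ partition those of $\F_*$, because the fibre flag of $\F_*$ induces flags on the subfibre and on the quotient fibre whose graded dimensions add up; Lemma \ref{l:flagbound} therefore gives $\sum_\bx\dim\flag_\bx(\bD_1) + \sum_\bx\dim\flag_\bx(\bD_2)\le \sum_\bx\dim\flag_\bx(\bD)$. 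Second, the defining recursion yields $F_{g,\bD}(s) + t^2g + st(g+M(\bD))\le F_{g,\bD}(r)$ for our chosen split $s+t=r$. Combining these gives $\ed(\F_*)\le F_{g,\bD}(r) + \sum_\bx\dim\flag_\bx(\bD)$, and taking the supremum over all $\F_*$ proves the proposition. The step I expect to require the most care is the descent bookkeeping of the middle paragraph: one must verify that descending $S_*$ and $Q_*$ to a common subfield and then descending the extension class through the universal family of Proposition \ref{p:construction} costs only $\dim V$ additional parameters, and that the induced sub-data $\bD_1,\bD_2$ genuinely partition $\bD$ point by point so that Lemma \ref{l:flagbound} applies. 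By contrast, the Galois descent of the socle—the feature that lets the whole induction run over the non-closed base field, in place of the Jordan--H\"older filtration used previously—is already guaranteed by \S\ref{s:socle}.
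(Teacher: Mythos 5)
Your proof is correct and follows essentially the same route as the paper: induction on the rank via the socle exact sequence (Galois-invariant, hence over the base field), Theorem \ref{t:functorpoly} for the polystable socle, the inductive hypothesis for the semistable quotient, Proposition \ref{p:bound} applied to the slope-zero semistable bundle $Q_*^\vee\otimes S_*$, the universal extension of Proposition \ref{p:construction}, Lemma \ref{l:flagbound}, and the defining recursion for $F_{g,\bD}$. Your only departures are cosmetic improvements: you absorb the stable case into the polystable one (the paper invokes Corollary \ref{c:stable} separately) and make explicit both the comparison $F_{g,\bD}(r)\ge r^2g$ and the semistability of the quotient, which the paper leaves implicit.
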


\begin{proof}
 We proceed by induction on the rank. The case of rank one is trivial.
Consider a parabolic bundle $\E_*$. If $\E_*$ is stable, then we are done
by Corollary~\ref{c:stable}.  
Suppose next that
$$
\soc(\E_*) \,=\, \E_*\, .
$$
Then by Theorem~\ref{t:functorpoly}, the bundle is defined over a field of 
transcendence degree at most
$$
r^2g + \sum_{\bx\in \bD}\dim(\flag_\bx(\bD))\, .
$$

In the remaining case there is an exact sequence
$$
0\,\longrightarrow\, \soc(\E_*)\,\lri\, \E_* \,\lri\, \F_*\,\lri\, 0\, .
$$
Suppose that $\bD_1$ and $\bD_2$ are the parabolic data for $\soc(\E_*)$ and 
$\F_*$ respectively. Lemma \ref{l:flagbound} shows that for every parabolic 
point $\bx$  we have
$$
\dim \flag_\bx(\bD_1) + \dim \flag_\bx(\bD_2) \,\le\, \dim \flag_\bx(\bD)\, .
$$
Let the ranks of $\soc(\E_*)$ and $\F_*$ be $t$ and $s$ respectively.
By Theorem \ref{t:functorpoly}, we know that $\soc(\E_*)$ is defined over a field 
of transcendence degree at most $t^2g +  \sum_{\bx\in \bD_1}\dim 
(\flag_{\bx}(\bD_1))$. By induction the parabolic bundle $\F_*$ is defined over a 
field of transcendence degree $F_g(s) + \sum_{\bx\in \bD_2}\dim 
(\flag_{\bx}(\bD_2))$. Let the compositum of these two fields be $K$.
We have
$$
\dim {\rm Ext}_{\rm par}(\F_*,\,\soc(\E_*))_{\rm triv}\,\le\, st(g + M(\bD))
$$ 
by Proposition \ref{p:bound}. Note that this result applies as 
$$
\F_*^\vee\otimes \soc(\E_*)
$$
is semistable of parabolic degree zero.
To obtain the result we apply Proposition \ref{p:construction}.
\end{proof}

\section{The Full Moduli Stack}

We form a function 
$$
G_{g,\bD}(r)\,=\,\max_{\substack{s+t=r\\ s\ge 0,\ t>0\\s,t\text{ integers}}}
F_{g,\bD}(t) + G_{g,\bD}(s) + st(g+ M(\bD))
$$
with $G_{g, \bD}(1)\,=\,g$ and $G_{g,\bD}(0)\,=\,0$.

\begin{theorem}
\label{t:full}
We have a bound
$$\ed(\bun^{r,d}_{X,\bD})\le G_{g,\bD}(r) + 
\sum_{\bx\in \bD}\dim(\flag_\bx(\bD))\, .
$$
\end{theorem}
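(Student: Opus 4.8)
The plan is to reproduce the proof of Proposition \ref{p:ssbound} almost verbatim, replacing the socle by the Harder--Narasimhan filtration so that the recursion defining $G_{g,\bD}$ — which feeds the semistable bound $F_{g,\bD}$ in as its top stratum — is matched term by term. I would argue by induction on the rank $r$. The base case $r=1$ is immediate: a rank-one parabolic bundle is a line bundle with trivial flags, parametrized by the degree-$d$ Picard variety of dimension $g$, so a rank-one bundle is defined over a field of transcendence degree at most $g = G_{g,\bD}(1)$.

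For the inductive step, fix a rank-$r$ parabolic bundle $\E_*$ over a field extension $K/k$. If $\E_*$ is parabolic semistable, then Proposition \ref{p:ssbound} provides a field of definition of transcendence degree at most $F_{g,\bD}(r)+\sum_{\bx\in\bD}\dim\flag_\bx(\bD)$; since taking $s=0$, $t=r$ in the recursion gives $G_{g,\bD}(r)\ge F_{g,\bD}(r)$, this case is already subsumed by the claimed bound. Otherwise I would pass to the Harder--Narasimhan filtration and extract the exact sequence
$$
0\,\lri\,\E'_*\,\lri\,\E_*\,\lri\,(\E/\E')_*\,\lri\,0
$$
appearing in Corollary \ref{c:bound}, where $(\E/\E')_*$ is the semistable top quotient of some rank $t>0$ and $\E'_*$ has rank $s=r-t$. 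Denoting by $\bD_1$ and $\bD_2$ the parabolic data induced on sub and quotient, Proposition \ref{p:ssbound} lets $(\E/\E')_*$ descend to a field of transcendence degree at most $F_{g,\bD}(t)+\sum_{\bx}\dim\flag_\bx(\bD_2)$, while the inductive hypothesis lets $\E'_*$ descend to a field of transcendence degree at most $G_{g,\bD}(s)+\sum_{\bx}\dim\flag_\bx(\bD_1)$.

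Over the compositum $K$ of these two fields, Corollary \ref{c:bound} bounds $\dim\Ext^1_{\parb}((\E/\E')_*,\,\E'_*)$ by $st(g+M(\bD))$, and Proposition \ref{p:construction} supplies a universal extension of parabolic bundles, so that $\E_*$ itself is recovered over an extension of $K$ whose transcendence degree over $K$ is at most this dimension. Summing the three contributions and applying Lemma \ref{l:flagbound} to combine the flag dimensions of $\bD_1$ and $\bD_2$ into those of $\bD$ at each parabolic point, I obtain a field of definition of transcendence degree at most
$$
F_{g,\bD}(t)+G_{g,\bD}(s)+st(g+M(\bD))+\sum_{\bx\in\bD}\dim\flag_\bx(\bD)\, .
$$
Maximizing over all splittings $s+t=r$ with $t>0$ reproduces exactly $G_{g,\bD}(r)+\sum_{\bx\in\bD}\dim\flag_\bx(\bD)$, closing the induction.

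The main obstacle I anticipate is the descent of the Harder--Narasimhan filtration to the ground field: the bookkeeping requires $\E'_*$ and $(\E/\E')_*$ to be defined over $K$ itself rather than only over $\overline{K}$, since otherwise the transcendence-degree estimates cannot be chained. This descent holds because the Harder--Narasimhan filtration is unique and hence Galois-stable, exactly as recorded for the maximal destabilizing parabolic subbundle in Section \ref{s:pbundles}; the care needed is to verify that this Galois-invariance is compatible with the parabolic structure and with the fields of definition that are fed into Corollary \ref{c:bound} and Proposition \ref{p:construction}.
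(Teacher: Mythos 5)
Your proposal is correct and follows essentially the same route as the paper's own proof: induction on the rank via the top step of the Harder--Narasimhan filtration, with Proposition \ref{p:ssbound} applied to the semistable quotient, the inductive hypothesis applied to the subbundle, Corollary \ref{c:bound} bounding the extension space, Proposition \ref{p:construction} descending $\E_*$ to the function field of a subvariety of that space, and Lemma \ref{l:flagbound} recombining the flag dimensions. The one small point you elide is that Proposition \ref{p:ssbound} a priori gives the bound $F_{g,\bD_2}(t)$ for the induced datum $\bD_2$, so one should note, as the paper does, that $\deg(|\bD_2|)\le\deg(|\bD|)$ and $N(\bD_2)\le N(\bD)$ imply $M(\bD_2)\le M(\bD)$ and hence $F_{g,\bD_2}\le F_{g,\bD}$.
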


\noindent
\textbf{Note : }The left-hand side in the inequality does not depend upon the
weights in the parabolic datum. Hence the inequality is true for all possible 
choices of weights. 

\begin{proof}
We prove this theorem by using induction on the rank $r$. The case of rank one
is trivial. Consider a parabolic bundle $\E_*$ and the exact sequence
$$
0\,\longrightarrow\,(\E_1)_*\,\lri\, (\E)_* \,\lri\,(\E_2)_*\,\lri\, 0\, .
$$
where $(\E_1)_*$ is the (destabilizing) parabolic proper subbundle of maximal
rank in the Harder-Narasimhan filtration of $\E_*$.
Suppose that $\bD_1$ and $\bD_2$ are the parabolic structures for $(\E_1)_*$ and 
$(\E_2)_*$ respectively. Then Lemma \ref{l:flagbound} shows that for every 
parabolic point $\bx$ we have
$$
\dim (\flag_\bx(\bD_1)) + \dim (\flag_\bx(\bD_2))
\,\le\, \dim (\flag_\bx(\bD))\, .
$$
Let the ranks of $(\E_1)_*$ and $(\E_2)_*$ be $t$ and $s$ respectively.
By Proposition \ref{p:ssbound} we know that $(\E_2)_*$ is defined over a field of 
transcendence degree at most $F_{g,\bD_1}(s) + \sum\dim \flag_\bx(\bD_1)$.
By induction, the parabolic bundle $(\E_1)_*$ is defined over a field of 
transcendence degree $G_{g,\bD_2}(t) + \sum_{\bx\in \bD_2}\dim 
\flag_{\bx}(\bD_2)$. Let the compositum of these two fields be $K$.  Note that 
$\deg(|\bD_i|)\le \deg(|\bD|)$ and $N(\bD_i)\,\le\, N(\bD)$ for $i\,=\,1,2$
so that $M(\bD_i)\,\le\, M(\bD)$ for $i\,=\,1,2$, and hence 
$F_{g,\bD_i}\,\le\, F_{g,\bD}$. 
Then $$\text{\rm tr-deg}K\,\le\, F_{g,\bD}(s)+G_{g,\bD}(t)+\sum_{\bx\in \bD}\dim 
(\flag_\bx(\bD))\, .$$
We have
$$
\dim {\rm Ext}_{\rm par}((\E_2)_*,\, (\E_1)_*)_{\rm triv} \,\le\, st(g+M(\bD))
$$ 
by Corollary \ref{c:bound}.
Let $W\,=\,{\rm Ext}((\E_2)_*, \,(\E_1)_*)_{\rm triv}$.  
The parabolic bundle $\E_*$ is defined over the function field $K'$
of a subvariety of this linear variety $W$. Then 
$${\rm trdeg}K'\,\le\, F_{g,\bD}(s)+G_{g,\bD}(t)+\sum_{\bx\in \bD}\dim 
(\flag_{\bx}(\bD))+st(g+M(\bD))\, .$$
The result follows.
\end{proof}

\section{Some Facts about $F_{g,\bD}$ and $G_{g,\bD}$}

Let
$$H_{g,\bD,r}(t)\,=\,F_{g,\bD}(t)+(r-t)^2g+(g+M(\bD))t(r-t)$$
so that
$$F_{g,\bD}(r)\,=\,\max_{0\le t\le r-1}H_{g,\bD,r}(t)\, .$$

\begin{proposition}
\label{p:growth}
If $g\,\le\, M(\bD)$, then for all $r\,\ge\, 0$, we have 
$$F_{g,\bD}(r)\,=\,\frac{r(r+1)}{2}g+\frac{r(r-1)}{2}M(\bD)\, .$$
If $g\,\ge\, M(\bD)$, then for all $r\,\ge\, 0$,
we have
$$F_{g,\bD}(r)\,=\,r^2g\, .$$
\end{proposition}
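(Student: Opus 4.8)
The plan is to prove both claims by induction on $r$, directly analyzing the maximization defining $F_{g,\bD}(r)$. Recall that
$$
F_{g,\bD}(r)\,=\,\max_{0\le t\le r-1}H_{g,\bD,r}(t)\, ,\qquad
H_{g,\bD,r}(t)\,=\,F_{g,\bD}(t)+(r-t)^2g+(g+M(\bD))t(r-t)\, ,
$$
so the whole argument reduces to identifying where the maximum is attained and evaluating it. The base cases $F_{g,\bD}(0)=0$ and $F_{g,\bD}(1)=g$ are immediate and agree with both closed forms.

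First I would treat the case $g\le M(\bD)$. Assuming inductively that $F_{g,\bD}(t)=\tfrac{t(t+1)}{2}g+\tfrac{t(t-1)}{2}M(\bD)$ for all $t<r$, I would substitute this into $H_{g,\bD,r}(t)$ to obtain an explicit expression in $t$ (with $r$ fixed). The key step is to show that, under the hypothesis $g\le M(\bD)$, this expression is maximized at the endpoint $t=r-1$. I expect $H_{g,\bD,r}(t)$ to be a quadratic in $t$; after collecting terms the sign of the leading coefficient and the location of its vertex should be controlled precisely by the comparison between $g$ and $M(\bD)$. Plugging $t=r-1$ back in and simplifying should reproduce $\tfrac{r(r+1)}{2}g+\tfrac{r(r-1)}{2}M(\bD)$, closing the induction. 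The analogous argument for $g\ge M(\bD)$ uses the inductive hypothesis $F_{g,\bD}(t)=t^2g$ and should show that the maximum is instead attained at $t=0$, yielding $H_{g,\bD,r}(0)=r^2g$.

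The main obstacle is the maximization itself: verifying that the optimum sits at the claimed endpoint for \emph{every} intermediate $r$, rather than in the interior. Concretely, after substituting the inductive formula, one must check that the discrete function $t\mapsto H_{g,\bD,r}(t)$ is monotone in the relevant direction on $\{0,1,\dots,r-1\}$, or at least that its endpoint value dominates. I would handle this by forming the difference $H_{g,\bD,r}(t+1)-H_{g,\bD,r}(t)$, which is affine in $t$, and showing it has a constant sign determined by whether $g\le M(\bD)$ or $g\ge M(\bD)$; this converts the optimization into a single inequality between $g$ and $M(\bD)$. The boundary case $g=M(\bD)$, where both formulas must coincide, gives a useful consistency check: there the difference should be identically zero, so every $t$ achieves the maximum, and both closed forms collapse to $\tfrac{r(r+1)}{2}g+\tfrac{r(r-1)}{2}g=r^2 g$, confirming the two regimes agree on their overlap.
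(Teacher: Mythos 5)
Your proposal follows essentially the same route as the paper: induction on $r$, substitution of the inductive closed form into $H_{g,\bD,r}(t)$, and optimization of the resulting quadratic in $t$, with the maximum located at the endpoint $t=r-1$ when $g\le M(\bD)$ and at $t=0$ when $g\ge M(\bD)$; your consistency check at $g=M(\bD)$ also mirrors the paper's closing observation. One detail of your mechanism is off, though it is easily repaired. In the regime $g\ge M(\bD)$, after substituting $F_{g,\bD}(t)=t^2g$ one gets $H_{g,\bD,r}(t)=(g-M(\bD))\,t(t-r)+r^2g$, whose discrete difference is $H_{g,\bD,r}(t+1)-H_{g,\bD,r}(t)=(g-M(\bD))(2t+1-r)$; this is \emph{not} of constant sign on $\{0,\dots,r-2\}$ (it changes sign near $t=(r-1)/2$), so the function is convex with an interior minimum rather than monotone. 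Your stated fallback --- that the endpoint value dominates --- is what actually works, and it is immediate: since $g-M(\bD)\ge 0$ and $t(t-r)\le 0$ for $0\le t\le r-1$, one has $H_{g,\bD,r}(t)\le r^2g=H_{g,\bD,r}(0)$, which is precisely the paper's argument in this case. In the regime $g<M(\bD)$ your monotonicity claim is correct: the difference equals $(g-M(\bD))(t+1-r)>0$ for $0\le t\le r-2$, which is the discrete counterpart of the paper's computation $f'(t)=(g-M(\bD))(t-(r-\tfrac12))\ge 0$ on the interval $[0,r-1]$ for the interpolating parabola.
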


\begin{proof}

\noindent\textbf{Case 1:} $g\le M(\bD)$.

For $r\,=\, 0,1$, this follows from definition of $F_{g,\bD}$.
Assume the result for $0\,\le\, t\,<\,r$ by induction. 
Then by the inductive hypothesis, for all $0\,\le\, t\,\le\, r-1$, we have 
$$
H_{g,\bD,r}(t)\,= \,(\frac{t(t+1)}{2})g+(\frac{t(t-1)}{2})M(\bD) +
 (r-t)^2g+(g+M(\bD))t(r-t)\, .
$$

Simplifying this, we find that for all $0\,\le\, t\,\le\, r-1$, we have
$$
H_{g,\bD,r}(t)\,=\ 
\left(\frac{g-M(\bD)}{2}\right)t^2-(r-\frac{1}{2})(g-M(\bD))t+r^2g\, .
$$

Note that 
$$H_{g,\bD,r}(r-1)\,=\,\frac{r(r+1)}{2}g+\frac{r(r-1)}{2}M(\bD)\, .$$

So it suffices to prove the claim that  
$$\max_{0\le t\le r-1}H_{g,\bD,r}(t)\,=\, H_{g,\bD,r}(r-1)\, .$$

If $g\,=\,M(\bD)$, then it is clear that
$H_{g,\bD,r}(t)\,=\,r^2g$ for all $0\,\le\, t\,\le\, r-1$
so the claim  holds in this case.

Assume then that $g\,<\,M(\bD)$.

Consider the parabola that agrees with $H_{g,\bD,r}(t)$:
$$f(t)\,=\,\left(\frac{g-M(\bD)}{2}\right)t^2-(r-\frac{1}{2})(g-M(\bD))t
+r^2g\, .$$
Since
$$f'(t)\,=\,(g-M(\bD))(t-(r-1/2))\, ,$$
we have $f'(t)\,\ge\, 0$
if and only if $t\,\le\, (r-1/2)$ under the hypothesis $g-M(\bD)\,<\,0$. 
So in particular, $f(t)$ is increasing on the interval $0\,\le\, t\,\le\, r-1$.
But then since $H_{g,\bD,r}(t)\,=\,f(t)$,
we have
$$F_{g,\bD}(r)\,=\,\max_{0\le t\le r-1}H_{g,\bD,r}(t)\,=\,
H_{g,\bD,r}(r-1)\, .$$
as required.

\noindent\textbf{Case 2:} $g\,\ge\, M(\bD)$.

We will prove by induction on $r$ that 
$$F_{g,\bD}(r)\,=\,H_{g,\bD,r}(0)\,=\,r^2g$$
if $g\,\ge\, M(\bD)$.
The statement is true for $r\,=\,0,1$ by definition and 
since we have more generally that
$$H_{g,\bD,r}(0)-H_{g,\bD,r}(1)\,=\,(r-1)(g-M(\bD))\,\ge\, 0\, ,$$
 this shows that, in particular, we have
$$F_{g,\bD}(2)\,=\,H_{g,\bD,2}(0)\,=\,4g\, .$$

By the inductive hypothesis, we may assume that for $0\,\le\, t\,\le\, r-1$,
we have
$$H_{g,\bD,r}(t)\,=\,t^2g+(r-t)^2g+(g+M(\bD))t(r-t)\, .$$

Simplifying this, we obtain
$$H_{g,\bD,r}(t)\,=\,(g-M(\bD))(t(t-r))+r^2g$$
by the above. Since $(g-M(\bD))\,\ge\, 0$, we have $(g-M(\bD))(t(t-r))\,\le\, 0$ 
if $0\,\le\, t\,\le\, r-1$.
So $H_{g,\bD,r}(t)\,\le\, r^2g\,=\,H_{g,\bD,r}(0)$ if $0\,\le\,t\,\le\, r-1$.
This implies that $F_{g,\bD}(r)\,=\,r^2g$.

Observe that 
$$F_{g,\bD}(r)\,=\,r^2g\,=\,\frac{r(r+1)}{2}g+\frac{r(r-1)}{2}M(\bD)$$
if $g\,=\,M(\bD)$ so that the answers agree on the overlap.
\end{proof}

Recall that
$$
G_{g,\bD}(r)\,=\, \max_{\substack{s+t=r\\ s\ge 0,\ t>0\\s,t\text{ integers}}}
F_{g,\bD}(t) + G_{g,\bD}(s) + st(g+ M(\bD))
$$
and $G_{g,\bD}(1)\,=\,g,\, G_{g,\bD}(0)\,=\,0$.

\begin{proposition}
\label{p:same}
$F_{g,\bD}(r)\,=\,G_{g,\bD}(r)$ for all $r\,\ge\, 0$.
\end{proposition}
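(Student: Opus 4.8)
The plan is to prove the identity by induction on $r$, using the inductive hypothesis to collapse the recursion defining $G_{g,\bD}$ into an expression involving only $F_{g,\bD}$, and then to match this against $F_{g,\bD}$ itself by means of the closed-form formulas supplied by Proposition \ref{p:growth}.

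First I would dispose of the base cases $r=0$ and $r=1$, where the defining initial conditions give $G_{g,\bD}(0)=0=F_{g,\bD}(0)$ and $G_{g,\bD}(1)=g=F_{g,\bD}(1)$. For the inductive step, suppose $G_{g,\bD}(s)=F_{g,\bD}(s)$ for every $s<r$. Since every admissible decomposition $s+t=r$ in the definition of $G_{g,\bD}$ has $t>0$, hence $s<r$, the inductive hypothesis lets me rewrite
$$
G_{g,\bD}(r)=\max_{\substack{s+t=r\\ s\ge 0,\ t>0}} \bigl(F_{g,\bD}(s)+F_{g,\bD}(t)+st(g+M(\bD))\bigr),
$$
an expression that is now manifestly symmetric in $s$ and $t$. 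Choosing $s=0$, $t=r$ (so that $F_{g,\bD}(0)=0$) produces the single term $F_{g,\bD}(r)$, which already shows $G_{g,\bD}(r)\ge F_{g,\bD}(r)$.

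The heart of the argument --- and the step I expect to be the main obstacle --- is the reverse inequality, which amounts to the superadditivity estimate
$$
F_{g,\bD}(s)+F_{g,\bD}(t)+st(g+M(\bD))\le F_{g,\bD}(r)\qquad (s+t=r)
$$
for every decomposition; once this is known, the maximum above is bounded by $F_{g,\bD}(r)$ and the two inequalities combine to give equality. I would verify this estimate directly from the two closed forms in Proposition \ref{p:growth}. When $g\ge M(\bD)$ we have $F_{g,\bD}(n)=n^2 g$, and a short computation gives $F_{g,\bD}(r)-F_{g,\bD}(s)-F_{g,\bD}(t)-st(g+M(\bD))=st\,(g-M(\bD))\ge 0$. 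When $g\le M(\bD)$ we have $F_{g,\bD}(n)=\frac{n^2}{2}(g+M(\bD))+\frac{n}{2}(g-M(\bD))$, and expanding $(s+t)^2=s^2+2st+t^2$ shows the same difference is identically zero. In both regimes the difference is nonnegative, so the superadditivity estimate holds and the induction closes.

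The only genuine content is the superadditivity computation, and even that is routine once Proposition \ref{p:growth} is in hand; the conceptual point is simply that the inductive hypothesis turns the asymmetric recursion for $G_{g,\bD}$ into the same symmetric quadratic shape that $F_{g,\bD}$ already satisfies, so the two functions are forced to agree.
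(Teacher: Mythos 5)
Your proof is correct and takes essentially the same approach as the paper: both arguments reduce the identity to the superadditivity estimate $F_{g,\bD}(s)+F_{g,\bD}(t)+st(g+M(\bD))\le F_{g,\bD}(s+t)$ and verify it case-by-case from the closed forms of Proposition \ref{p:growth}, finding exact equality when $g\le M(\bD)$ and slack $st(g-M(\bD))\ge 0$ when $g\ge M(\bD)$. The only difference is cosmetic --- you make explicit the induction and the lower bound $G_{g,\bD}(r)\ge F_{g,\bD}(r)$ coming from the $s=0$, $t=r$ term, steps the paper compresses into its ``it suffices to prove'' reduction.
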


\begin{proof} 
The result is true by definition for $r\,=\,0,1$.
It suffices to prove that for all $0\,<\,s\,\le\, t$, we have 
$$F_{g,\bD}(s+t)-F_{g,\bD}(s)-F_{g,\bD}(t)-(g+M(\bD))st\,\ge\, 0$$
using the previous proposition.

\noindent\textbf{Case 1:} $g\,\le\, M(\bD)$.

Since 
$$F_{g,\bD}(r)\,=\,\frac{r(r+1)}{2}g+\frac{r(r-1)}{2}M(\bD)\, ,$$
we find that
$$F_{g,\bD}(s+t)-F_{g,\bD}(s)-F_{g,\bD}(t)-(g+M(\bD))st
\, =\,st(g+M(\bD))-st(g+M(\bD))=0\, .$$

\noindent\textbf{Case 2:}
$g\,>\, M(\bD)$.

Since 
$$F_{g,\bD}(r)\,=\,r^2g\, ,$$
we find that
$$F_{g,\bD}(s+t)-F_{g,\bD}(s)-F_{g,\bD}(t)-(g+M(\bD))st$$
$$=\, 2stg-st(g+M(\bD))=st(g-M(\bD))\,\ge\, 0\, .$$
This proves the proposition.
\end{proof}

\begin{remark}
\label{r:comparison}
The main result of \cite{dl:09} shows that 
$$
\ed(\bun_X^{r,d}) \,\le\, \lfloor h_g(r) \rfloor +  g\, .
$$
The function $h_{g}(r)$ is defined recursively by $h_g(1)\,=\,1$ and 
$$
h_g(r) - h_g(r-1)\,= \,r^3 - r^2 + \frac{r^4}{4}(g-1) + \frac{r}{2} + 
\frac{r^2g^2}{4} + \frac{1}{4}\, .
$$
(Note : solving the recursion would produce a quartic.)
Putting together Theorem \ref{t:full}, Proposition \ref{p:growth} and 
Proposition \ref{p:same} 
for $\bD\,=\,\emptyset$ and the original hypothesis $g\ge 2$ on the curve, we 
have
$$
\ed(\bun_X^{r,d}) \,\le\, r^2g\, ,
$$
which is a substantial improvement. The main reason for the improvement is the 
use of the socle filtration as opposed to the Jordan-H\"older filtration. 
\end{remark}

\section{Lower Bounds}\label{s:lower}

The issue of finding lower bounds in questions on essential dimension is more subtle.
We fix a rank $r$ and denote by $\bun_{X, \bD}^{r,\xi,ss}$ the semistable locus
of the moduli stack of vector bundles of rank 
$r$, determinant $\xi$ and parabolic structure along $\bD$. We would like to
find a lower bound on its essential dimension.

Suppose that $p^l$ divides $l(\bD)$ where $p$ is a prime. (Recall the 
definition of $l(\bD)$ from Theorem \ref{t:stable}.)
Let $x$ be a $k$-point of $X$; it exists by our ongoing hypothesis on $X$.
Construct a parabolic point
$\bx\,=\,(x,\{p^l,r-p^l\},\{\alpha_1,\alpha_2\})$, where $\alpha_i$ 
are chosen sufficiently small so that
if a parabolic vector bundle is semistable for the datum $\bD\cup\{\bx \}$ then 
the underlying vector bundle is semistable.
It is easy to see one can do this using the definition of parabolic slope.

\begin{theorem}
\label{t:lower}
We have 
$$
\ed(\bun_{X,\bD}^{r,\xi,ss}) \,\ge\,   (r^2-1)(g-1) + p^l + 
 \sum_{y\in\bD} \dim \flag_{y}(\bD)
$$
and
$$
\ed(\bun_{X,\bD}^{r,d,ss}) \,\ge\,   (r^2-1)(g-1)  +  p^l - 1 + g +
 \sum_{y\in\bD} \dim \flag_{y}(\bD)\, .
$$
\end{theorem}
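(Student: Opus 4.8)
The plan is to adjoin the auxiliary parabolic point $\bx$ to $\bD$, set $\bD'=\bD\cup\{\bx\}$, compute the essential dimension of the \emph{stable} locus for $\bD'$ exactly, and then transport that information back to the \emph{semistable} locus for $\bD$ through the morphism that forgets the flag at $\bx$.

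First I would compute $l(\bD')$. Since the only new multiplicities are $p^l$ and $r-p^l$, we have $l(\bD')=\gcd(l(\bD),p^l,r-p^l)$. From $p^l\mid l(\bD)$ we get $p^l\mid r$ (as $l(\bD)\mid r$), hence $p^l\mid r-p^l$; thus every entry of the gcd is divisible by $p^l$ while $p^l$ itself occurs among them, forcing $l(\bD')=p^l$, a prime power. By Theorem~\ref{t:index} the index of the generic gerbe of $\bun_{X,\bD'}^{r,\xi,s}$ equals its period $p^l$, so the equality case of Theorem~\ref{t:stable} (resp. Corollary~\ref{c:stable}(ii)) applies and gives
$$
\ed(\bun_{X,\bD'}^{r,\xi,s})=p^l+(r^2-1)(g-1)+\sum_{y\in\bD'}\dim\flag_y(\bD'),
$$
$$
\ed(\bun_{X,\bD'}^{r,d,s})=p^l-1+(r^2-1)(g-1)+g+\sum_{y\in\bD'}\dim\flag_y(\bD').
$$
Because the flags at the old points are unchanged, $\sum_{y\in\bD'}\dim\flag_y(\bD')=\sum_{y\in\bD}\dim\flag_y(\bD)+\dim\flag_\bx(\bD')$, and by the formula recalled in Lemma~\ref{l:flagbound} one has $\dim\flag_\bx(\bD')=p^l(r-p^l)$.

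The heart of the argument is the forgetful morphism $q:\bun_{X,\bD'}^{r,\xi,s}\to\bun_{X,\bD}^{r,\xi,ss}$ dropping the flag at $\bx$. Because the weights $\alpha_1,\alpha_2$ at $\bx$ were chosen small, a parabolic bundle stable for $\bD'$ stays semistable for $\bD$ after forgetting, so $q$ is well defined, and its fibres are copies of $\flag_\bx(\bD')$, of dimension $p^l(r-p^l)$. I claim
$$
\ed(\bun_{X,\bD'}^{r,\xi,s})\le\ed(\bun_{X,\bD}^{r,\xi,ss})+p^l(r-p^l),
$$
and likewise with $\xi$ replaced by $d$ throughout. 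Granting this, substituting the exact values above and cancelling the common term $p^l(r-p^l)=\dim\flag_\bx(\bD')$ yields precisely the two asserted lower bounds.

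To prove the claim I would argue by descent, object by object. Given a stable $\bD'$-bundle $\eta$ over $X_K$, its image $q_*\eta$ descends to some $K_0\subseteq K$ with $\text{\rm tr-deg}_k K_0\le\ed(\bun_{X,\bD}^{r,\xi,ss})$; fix a model $\bar\eta_0$ over $K_0$. Under the chosen isomorphism $\bar\eta_0\otimes_{K_0}K\cong q_*\eta$, the flag of $\eta$ at $\bx$ becomes a $K$-point of the $K_0$-variety $\flag_\bx(\bar\eta_0)$, so it descends to the function field $K_1$ of its image, with $\text{\rm tr-deg}_{K_0}K_1\le p^l(r-p^l)$. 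Equipping $\bar\eta_0\otimes_{K_0}K_1$ with this flag produces a $\bD'$-bundle over $K_1$ restricting to $\eta$; it is stable because its base change to $\overline{K}$ agrees with the stable $\eta\otimes\overline{K}$ and stability descends. Hence $\ed(\eta)\le\text{\rm tr-deg}_k K_1\le\ed(\bun_{X,\bD}^{r,\xi,ss})+p^l(r-p^l)$, and taking the supremum over $\eta$ gives the claim; the varying-determinant case is identical. The step I expect to require the most care is the well-definedness of $q$, namely that stability for $\bD'$ forces semistability for $\bD$ once the new weights are small enough, together with checking that the reconstructed $\bD'$-bundle over $K_1$ is genuinely stable; the descent of the flag datum and the transcendence-degree bookkeeping are then routine.
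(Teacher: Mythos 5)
Your proposal is correct and follows essentially the same route as the paper: adjoin the auxiliary parabolic point $\bx$ so that $l(\bD')=p^l$, apply the prime-power equality case of Theorem \ref{t:stable} (resp.\ Corollary \ref{c:stable}) to compute $\ed(\bun^{r,\xi,s}_{X,\bD'})$ exactly, and transfer the result along the forgetful map to the $\bD$-semistable locus. The only difference is that where the paper simply cites the fibration theorem \cite[Theorem 3.2]{brosnan:09} for the inequality $\ed(\bun^{r,\xi,s}_{X,\bD'})\le \ed(\bun^{r,\xi,ss}_{X,\bD})+\dim\flag_\bx(\bD')$, you reprove that special case by a direct descent of the flag datum, which is sound.
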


\begin{proof}
Let $\bD'\,=\, \{\bx\} \cup \bD$ be the datum constructed above.
Now the greatest common divisor of the multiplicities is $p^l$. Hence by
Theorem \ref{t:stable},
as we are in the prime power case and we calculate the essential dimension of the stack
$\bun_{X,\bD'}^{r,\xi,ss}$. 
(Note that Theorem \ref{t:stable} does not require
that $X$ have three $k$-rational points that are not parabolic.) So we have
$$
\ed(\bun^{r,\xi,s}_{X,\bD'}) \,=\, p^l + (r^2-1)(g-1) + \ \flag_\bx(\bD')
+ \sum_{y\in\bD'} \dim \flag_{y}(\bD)\, .
$$
The result now follows from the fibration theorem, 
\cite[Theorem 3.2]{brosnan:09}, applied to the representable fibration
$$
\bun^{r,\xi,s}_{X,\bD'} \,\longrightarrow\, \bun^{r,\xi,s}_{X,\bD}.
$$
The non-fixed determinant case is analogous via Corollary \ref{c:stable}.
\end{proof}

\end{document}